\let\mathcal\undefined
\DeclareMathAlphabet{\mathcal}{U}{dutchcal}{m}{n}
\tikzset{curve/.style={settings={#1},to path={(\tikztostart)
			.. controls ($(\tikztostart)!\pv{pos}!(\tikztotarget)!\pv{height}!270:(\tikztotarget)$)
			and ($(\tikztostart)!1-\pv{pos}!(\tikztotarget)!\pv{height}!270:(\tikztotarget)$)
			.. (\tikztotarget)\tikztonodes}},
	settings/.code={\tikzset{quiver/.cd,#1}
		\def\pv##1{\pgfkeysvalueof{/tikz/quiver/##1}}},
	quiver/.cd,pos/.initial=0.35,height/.initial=0}
\tikzset{tail reversed/.code={\pgfsetarrowsstart{tikzcd to}}}
\tikzset{2tail/.code={\pgfsetarrowsstart{Implies[reversed]}}}
\tikzset{2tail reversed/.code={\pgfsetarrowsstart{Implies}}}
\newcommand*{\doublerightarrow}[2]{\mathrel{
		\settowidth{\@tempdima}{$\scriptstyle#1$}
		\settowidth{\@tempdimb}{$\scriptstyle#2$}
		\ifdim\@tempdimb>\@tempdima \@tempdima=\@tempdimb\fi
		\mathop{\vcenter{
				\offinterlineskip\ialign{\hbox to\dimexpr\@tempdima+1em{##}\cr
					\rightarrowfill\cr\noalign{\kern.5ex}
					\rightarrowfill\cr}}}\limits^{\!#1}_{\!#2}}}
\newcommand*{\triplerightarrow}[1]{\mathrel{
		\settowidth{\@tempdima}{$\scriptstyle#1$}
		\mathop{\vcenter{
				\offinterlineskip\ialign{\hbox to\dimexpr\@tempdima+1em{##}\cr
					\rightarrowfill\cr\noalign{\kern.5ex}
					\rightarrowfill\cr\noalign{\kern.5ex}
					\rightarrowfill\cr}}}\limits^{\!#1}}}
\def\@tocline#1#2#3#4#5#6#7{\relax
	\ifnum #1>\c@tocdepth 
	\else
	\par \addpenalty\@secpenalty\addvspace{#2}%
	\begingroup \hyphenpenalty\@M
	\@ifempty{#4}{%
		\@tempdima\csname r@tocindent\number#1\endcsname\relax
	}{%
		\@tempdima#4\relax
	}%
	\parindent\z@ \leftskip#3\relax \advance\leftskip\@tempdima\relax
	\rightskip\@pnumwidth plus4em \parfillskip-\@pnumwidth
	#5\leavevmode\hskip-\@tempdima
	\ifcase #1
	\or\or \hskip 1em \or \hskip 2em \else \hskip 3em \fi%
	#6\nobreak\relax
	\dotfill\hbox to\@pnumwidth{\@tocpagenum{#7}}\par
	\nobreak
	\endgroup
	\fi}
\newcommand{\infinity}{\mbox{\footnotesize $\infty$}}
\newcommand{\LL}{\mathbb{L}}
\newcommand{\triv}{\operatorname{triv}}
\newcommand{\oblv}{\operatorname{oblv}}
\newcommand{\fil}{\operatorname{fil}}
\newcommand{\BGagm}{\mathsf{B}\Ga\rtimes \Gm}
\newcommand{\Einf}{\mathbb{E}_{\infty}}
\newcommand{\Rep}{{\operatorname{Rep}}}
\newcommand{\op}{\operatorname{op}}
\newcommand{\Perf}{{\operatorname{Perf}}}
\newcommand{{\Cn}}{\operatorname{Cn}}
\newcommand{\BGa}{\mathsf{B}\Ga}
\newcommand{\BGm}{\mathsf{B}\Gm}
\newcommand{\colim}{\operatorname{colim}}
\newcommand{\Mapin}{{\underline{\smash{\Map}}}}
\newcommand{\scrC}{\mathscr{C}}
\newcommand{\hsp}{\hspace{0.1cm}}
\newcommand{\fib}{{\operatorname{fib}}}
\newcommand{\cofib}{{\operatorname{cofib}}}
\newcommand{\Spec}{{\operatorname{Spec}}}
\newcommand{\Qcoh}{{\operatorname{QCoh}}}
\newcommand{\lpd}{(\!(}
\newcommand{\rpd}{)\!)}
\newcommand{\Mod}{{\operatorname{Mod}}}
\newcommand{\lp}{\left(}
\newcommand{\rp}{\right)}
\newcommand{\Ind}{{\operatorname{Ind}}}
\newcommand*{\longhookrightarrow}{\ensuremath{\lhook\joinrel\relbar\joinrel\rightarrow}}
\appto\maketitle{%
	\let\@makefnmark\relax  \let\@thefnmark\relax
	\ifx\@empty\addresses\else\@footnotetext{%
		\vskip-\bigskipamount\@setaddresses}
}
\def\enddoc@text{}
\newcounter{savedchapter}
\preto\appendix{\setcounter{savedchapter}{\arabic{chapter}}}
\newcommand\resumechapters{
	\setcounter{chapter}{\arabic{savedchapter}}
	\setcounter{section}{0}
	\gdef\@chapapp{\chaptername}
	\gdef\thechapter{\@arabic\c@chapter}
}
\newcommand{\dgMod}{{\operatorname{dgMod}}}
\newcommand{\Gr}{{\operatorname{Gr}}}
\newcommand{\Ga}{\mathbb{G}_{\operatorname{a},\Bbbk}}
\newcommand{\Gm}{\mathbb{G}_{\operatorname{m},\Bbbk}}
\newcommand{\ZZ}{\mathbb{Z}}
\newcommand{\AAff}{\mathbb{A}}
\newcommand{\NN}{\mathbb{N}}
\newcommand{\QQ}{\mathbb{Q}}
\newcommand{\Map}{{\operatorname{Map}}}
\newcommand{\Sym}{{\operatorname{Sym}}}
\newcommand{\gfrak}{\mathfrak{g}}
\newcommand{\Fun}{{\operatorname{Fun}}}
\newcommand{\dR}{{\operatorname{dR}}}
\newcommand{\LX}{{\mathscr{L}X}}
\newcommand{\Modgr}{\Mod^{{\operatorname{gr}}}}
\newcommand{\gr}{{\operatorname{gr}}}
\newcommand{\Modmixgr}{\varepsilon\operatorname{-}\Mod^{{\operatorname{gr}}}}
\newcommand{\Modmixgrcn}{\varepsilon\operatorname{-}\Mod^{{\operatorname{gr},\geqslant0}}}
\newcommand{\mixgr}{\varepsilon\operatorname{-gr}}
\newcommand{\taufil}{\tau^{\fil}}
\newcommand{\Modfil}{\Mod^{{\operatorname{fil}}}}
\newcommand{\Modfilcomp}{\widehat{\Mod}^{\fil}}
\newcommand{\otimesfil}{\otimes^{\fil}}
\newcommand{\otimesmixgr}{\otimes^{\varepsilon\text{-}\gr}}
\newcommand{\Mapmixgr}{{\Mapin}^{\mixgr}}
\numberwithin{equation}{subsection}
\theoremstyle{plain}
\newtheorem{theorem}[equation]{Theorem}
\newtheorem{lemman}[equation]{Lemma}
\newtheorem{propositionn}[equation]{Proposition}
\newtheorem{corollaryn}[equation]{Corollary}
\newtheorem*{theoremn}{Theorem}
\theoremstyle{definition}
\newtheorem{defn}[equation]{Definition}
\newtheorem{parag}[equation]{}
\newtheorem{remark}[equation]{Remark}
\newtheorem{construction}[equation]{Construction}
\newtheorem{porism}[equation]{Porism}
\newtheorem{warning}[equation]{Warning}
\newtheorem{notation}[equation]{Notation}
\newcommand{\Sp}{{{\mathscr{S}}{\operatorname{p}}}}
\newcommand\restr[2]{{
		\left.\kern-\nulldelimiterspace
		#1
		\vphantom{\|}
		\right|_{#2} 
}}
\providecommand{\abstract}{}
\title{A $t$-structure on the $\infty$-category of mixed graded modules}
\author{Emanuele Pavia}
\address{Università degli Studi di Milano}
\email{emanuele.pavia@unimi.it}
\date{\today}
\begin{document}
	\maketitle
	\begin{abstract}
	In this work, we shall study in a purely model-independent fashion the \infinity-category of mixed graded modules over a ring of characteristic $0$, as defined in \cite{PTVV} and \cite{CPTVV}, and collect some basic results about its main formal properties. Finally, we shall endow such \infinity-category with a both left and right complete accessible $t$-structure, showing how this identifies the \infinity-category of mixed graded modules with the left completion of the Beilinson $t$-structure on the \infinity-category of filtered modules. Most of the content of this paper is already available in literature, and it serves mainly as a reference for the work of \cite{pavia2}.
	\end{abstract}
	\tableofcontents
\section*{Introduction}
\addtocontents{toc}{\protect\setcounter{tocdepth}{0}}
\subsection*{Motivations}
It is well known that, in characteristic $0$, an action of the circle $S^1\coloneqq\mathsf{B}\ZZ$ (seen as the classifying derived stack for the constant stack $\ZZ$) over a derived stack $X$  is linked to differential forms and de Rham theory over $X$. Namely, in \cite{BZN} the authors established that $p$-forms over a derived stack $X$ can be interpreted via functions on the derived loop stack $\LX$, which is canonically endowed with an action of the circle $S^1$ by rotating loops. The condition of being closed is then encoded in the property that the function is $S^1$-equivariant. However, the homotopy theory of $S^1$-complexes is equivalent to the homotopy theory of \textit{mixed graded complexes}, i.e., chain complexes $M_{\bullet}$ endowed with a decomposition$$M_{\bullet}\coloneqq\bigoplus_{p\in\ZZ}M_p,$$where each $M_p$ is a sub-complex of $M_{\bullet}$, and endowed with a \textit{mixed differential} $\varepsilon_p\colon M_p\to M_{p-1}[-1]$ satisfying the usual square-to-zero property. The theory of mixed graded complexes in characteristic $0$, which was developed in \cite{PTVV} and \cite{CPTVV}, has been exploited extensively in the past years, and has also been linked to the usual derived filtered category of Beilinson in \cite{UHKR}, \cite{toen2020algebraic, derivedfoliations2} and \cite{calaque2021lie}.  Indeed, the de Rham algebra of a differential graded commutative ring $A_{\bullet}$, with its grading given by$$\dR^p{\lp A_{\bullet}\rp}=\Sym^p_{\Bbbk}{\lp\LL_{A_{\bullet}/\Bbbk}[1]\rp},$$is a \textit{mixed graded commutative algebra}, where the mixed differential is provided exactly by the de Rham differential. One can then define (shifted) $p$-forms and closed $p$-forms on a differential graded commutative algebra $A_{\bullet}$ in terms of elements of $\dR^p{\lp A_{\bullet}\rp}$ and homotopy fixed points for the de Rham differentials in $\dR^p{\lp A_{\bullet}\rp}$, respectively. Moreover, all these constructions satisfy descent, and make perfectly sense also for more general derived stacks.\\

This theoretical framework yields new perspectives over derived symplectic geometry and deformation quantization. Using mixed graded complexes and mixed graded cdga's one can define:
\begin{enumerate}
	\item shifted symplectic forms over a derived stack as shifted closed $2$-forms which are non-degenerate in some suitable sense;
	\item Lagrangian structures on morphisms of derived stacks;
	\item Poisson structures and their deformation quantization;
	\item derived algebraic foliations. 
\end{enumerate}
In the last years, mixed graded complexes have been employed also in the homotopy theory of Lie algebras and Lie algebroids. Given a differential graded Lie algebra $\gfrak_{\bullet}$, it is known that the Chevalley-Eilenberg algebra has a richer structure of \textit{mixed graded commutative algebra} (see also \cite{CG}, \cite{nuiten19}). In particular, it is expected that mixed graded complexes can provide a natural setting where to work with formal geometry and deformation theory - this last particular application is what we are most interested in. 
\subsection*{Outline of the paper}
The content of this paper stems from our work in the context of derived Lie algebras, and in particular from the study of the mixed graded structure of the Chevalley-Eilenberg algebras and coalgebras of Lie algebras, which is studied in greater detail in \cite{pavia2}. For this purpose, the main aim of this paper is to collect some of the most important and useful features of the stable \infinity-category $\Modmixgr_{\Bbbk}$ of mixed graded modules. In \cref{chapter:mixedgradedmodules} we investigate the computation of limits and colimits of mixed graded modules, their closed monoidal structure, and the relationship of mixed graded modules with purely graded modules and non-graded modules (\cref{sec:basicmixedgraded}). While the statements and the proofs are presented in a model-independent fashion, differently from what one can read in the existing literature, most of the results here gathered are far from being original. In particular, most of them can be found or easily derived from the content of \cite{PTVV}, \cite{CPTVV} and \cite{derivedfoliations2}. The main new contribution of this part is, arguably, the characterization of fully dualizable objects of $\Modmixgr_{\Bbbk}$ (\cref{prop:dualizableobjects}).\\

In \cref{sec:filtered/mixedgraded}, we study the relationship between the \infinity-categories $\Modmixgr_{\Bbbk}$ and $\Modfil_{\Bbbk}$, where the latter denotes the \infinity-category of filtered modules. The main result of this paper is the characterization of $\Modmixgr_{\Bbbk}$ as the full sub-\infinity-category of $\Modfil_{\Bbbk}$ spanned by filtered modules with complete filtration. While this result has already been proved (see for example \cite{derivedfoliations2} and \cite{calaque2021lie}), our work offers a deeper insight on such embedding by taking into account some $t$-structures on both \infinity-categories, namely:
\begin{theoremn}[\cref{thm:mixedgradedtstructure,thm:leftcompletion}]
	There exists a left and right complete $t$-structure on the stable \infinity-category of mixed graded modules whose heart is equivalent to the usual abelian $1$-category of chain complexes. Moreover, the embedding of the \infinity-category of mixed graded modules into the \infinity-category of filtered modules admits a left adjoint $(-)^{\mixgr}\colon\Modfil_{\Bbbk}\to\Modmixgr_{\Bbbk}$ which identifies the \infinity-category of mixed graded $\Bbbk$-modules with the left completion $\Modfilcomp_{\Bbbk}$ of the Beilinson $t$-structure of \cite{beilinsonderived} on the \infinity-category of filtered $\Bbbk$-modules (which, in virtue of \cref{prop:leftcompletionBeilinson}, is the full sub-\infinity-category of modules with complete filtration).
\end{theoremn}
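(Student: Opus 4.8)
The plan is to construct the $t$-structure directly on $\Modmixgr_{\Bbbk}$, using its description as the $\infty$-category $\Mod_{\Bbbk[\varepsilon]}(\Modgr_{\Bbbk})$ of modules over the square-zero algebra $\Bbbk[\varepsilon]$ internal to graded $\Bbbk$-modules, and only afterwards to match it with the Beilinson $t$-structure under the identification of mixed graded modules with complete filtered modules.

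First I would equip $\Modgr_{\Bbbk}$ (equivalently, $\prod_{p\in\ZZ}\Mod_{\Bbbk}$) with the \emph{weighted} $t$-structure whose connective part consists of those $M$ with $M(p)\in(\Mod_{\Bbbk})_{\geqslant -p}$ for all weights $p$; the shifts are forced by the bookkeeping of the mixed differential, which lowers weight by one and has the form $\varepsilon\colon M(p)\to M(p-1)[-1]$, being chosen precisely so that $\Bbbk[\varepsilon]$ — whose generator $\varepsilon$ has weight $-1$ and homological degree $+1$ — is a connective algebra. The general construction of $t$-structures on module $\infty$-categories over a connective algebra then endows $\Modmixgr_{\Bbbk}$ with a $t$-structure along which the forgetful functor $\oblv$ is $t$-exact and conservative, and whose heart is $\Mod_{\pi_0\Bbbk[\varepsilon]}$ of the heart of $\Modgr_{\Bbbk}$. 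Since $\Bbbk[\varepsilon]$ already lies in the heart of $\Modgr_{\Bbbk}$ one has $\pi_0\Bbbk[\varepsilon]\simeq\Bbbk[\varepsilon]$, so the heart is the abelian category of $\ZZ$-graded $\Bbbk$-modules equipped with a square-zero operator of weight $-1$ — that is, the $1$-category $\mathrm{Ch}(\Bbbk)$ of chain complexes. Concretely, $\tau_{\leqslant 0}M$ has underlying graded module $\bigoplus_p\tau_{\leqslant -p}M(p)$, with the mixed differential induced from that of $M$ by functoriality of truncation, and dually for $\tau_{\geqslant 0}$; I would record these formulas as the computational backbone. Accessibility is clear, the connective part being presentable and generated under colimits by the set $\{\Bbbk[\varepsilon]\otimes\Bbbk(p)[-p]\}_{p\in\ZZ}$. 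For left- and right-completeness I would use that $\oblv$ is conservative, $t$-exact, and preserves all limits and colimits — both adjoints exist, $\Bbbk[\varepsilon]$ being perfect over $\Bbbk$ — while $\Modgr_{\Bbbk}$, a product of copies of the left and right complete $\Mod_{\Bbbk}$, is itself left and right complete; a conservative $t$-exact functor preserving the relevant (co)limits reflects completeness on both sides. This establishes \cref{thm:mixedgradedtstructure}.

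For \cref{thm:leftcompletion} I would recall from \cref{sec:filtered/mixedgraded} that the fully faithful embedding $\iota\colon\Modmixgr_{\Bbbk}\hookrightarrow\Modfil_{\Bbbk}$ has essential image the complete filtered modules, with left adjoint $(-)^{\mixgr}$, and that on underlying objects $(-)^{\mixgr}$ carries a filtered module $F$ to $\bigoplus_p\gr^p F$ equipped with the mixed differential coming from the boundary maps of the cofibre sequences $F^{p+1}\to F^p\to\gr^p F$; in particular $\oblv\bigl((-)^{\mixgr}F\bigr)(p)\simeq\gr^p F$. Comparing this identity with the definition of the Beilinson $t$-structure on $\Modfil_{\Bbbk}$ — whose connective, resp. coconnective, objects are those $F$ with $\gr^p F\in(\Mod_{\Bbbk})_{\geqslant -p}$, resp. $(\Mod_{\Bbbk})_{\leqslant -p}$, for all $p$ — one reads off at once that $(-)^{\mixgr}$ is $t$-exact and that the functor it induces on hearts $\mathrm{Ch}(\Bbbk)\to\mathrm{Ch}(\Bbbk)$ is an equivalence (indeed the identity, after Beilinson's classical identification of the heart of his $t$-structure with $\mathrm{Ch}(\Bbbk)$, the differential on both sides being the one assembled from the connecting maps). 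Now $(-)^{\mixgr}$ is a Bousfield localization and $\Modmixgr_{\Bbbk}$ is left complete, and a $t$-exact localization onto a left-complete target inducing an equivalence on hearts is forced to be the left-completion functor; hence $(-)^{\mixgr}$ exhibits $\Modmixgr_{\Bbbk}$ as the left completion $\Modfilcomp_{\Bbbk}$ of the Beilinson $t$-structure, which by \cref{prop:leftcompletionBeilinson} is the full subcategory of complete filtered modules — consistent with, and refining, the identification of \cref{sec:filtered/mixedgraded}.

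The step I expect to demand the most care is the reconciliation of conventions: making the normalization of the mixed differential $\varepsilon\colon M(p)\to M(p-1)[-1]$, of the associated-graded functor (increasing versus decreasing filtrations, and the degree shift in the boundary maps), and of Beilinson's $t$-structure mutually consistent, so that the comparison in the previous paragraph is literally an equality of connectivity conditions, weight by weight. Tied to this is the one genuinely non-formal point in the construction — checking that the weight-wise truncations really do organize into a mixed graded module, i.e. that the induced operator squares to zero and is compatible with the truncation unit; this is what upgrades "$\oblv$ detects connectivity" to "$\oblv$ is $t$-exact", and hence what makes the argument run. Everything else is a matter of assembling the results cited above.
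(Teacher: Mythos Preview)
Your approach is sound and genuinely different from the paper's in two places. For the $t$-structure on $\Modmixgr_{\Bbbk}$, the paper argues directly: it shows the prospective connective part is presentable and closed under colimits and extensions (using that $\oblv_{\varepsilon}$ creates colimits), invokes \cite[Proposition~1.4.4.11]{ha}, and then identifies the coconnective objects by an explicit orthogonality computation with the mapping-$\Bbbk$-module formula of \ref{parag:monoidalstructureonmodmixgr}; the heart is deferred and read off only after the filtered comparison. Your route via modules over the connective algebra $\Bbbk[\varepsilon]$ internal to $\Modgr_{\Bbbk}$ is more structural and delivers the heart $\dgMod_{\Bbbk}$ immediately, and your completeness argument (reflecting along the conservative $t$-exact $\oblv_{\varepsilon}$) treats left and right completeness uniformly, whereas the paper only spells out the left case via \cite[Proposition~1.2.1.19]{ha}. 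The cost is that you must first pass from the comodule description of \cref{def:mixedgraded} to a module category, and check that the weighted $t$-structure on $\Modgr_{\Bbbk}$ is compatible with the Day convolution so that the module-over-connective-algebra machinery applies. Two cautions on the second half: your description of Beilinson-coconnective objects via $\Gr_p$ is not correct on all of $\Modfil_{\Bbbk}$ --- the condition is on $M_p$, not on $\Gr_pM_{\bullet}$, see \cref{thm:beilinsontstructure}, and indeed a nonzero constant filtration satisfies your condition in every degree while being $\infty$-connective --- though on complete objects the distinction disappears; and the principle ``a $t$-exact localization onto a left-complete target inducing an equivalence on hearts is the left completion'' is not valid as stated: one needs the stronger input that the localization restricts to an equivalence on each $\scrC_{\leqslant n}$, not merely on the heart. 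That is exactly what the paper's \cref{prop:completefiltered} establishes (equivalently, \cref{prop:leftcompletionBeilinson} together with the equivalence $\Modmixgr_{\Bbbk}\simeq\Modfilcomp_{\Bbbk}$ you are already citing), so you should invoke that rather than the abstract claim.
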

\subsection*{Acknowledgments}I am very thankful to my PhD advisor, M. Porta, who introduced me to the theory of mixed graded complexes. I would also like to thank S. Ariotta, F. Battistoni, I. Di Liberti, A. Gagna, and G. Nocera for fruitful discussions and suggestions which helped shaping the content of this pamphlet. Finally I would like D. Calaque and M. Robalo for precious comments and their careful refereeing.  
\subsection*{Notations, conventions and main references}\
\begin{itemize}
	\item Throughout all this paper, we employ freely the language of derived algebraic geometry, \infinity-categories, and homotopical algebra provided by \cite{htt} and \cite{ha}, from which we borrow the formalism and most notations. Our language is \textit{innerly} derived: every definition and construction has to be interpreted, without further indication suggesting the contrary, in the context of higher algebra. In particular, by \textit{module} over a discrete commutative ring $\Bbbk$ we mean an object of the stable derived \infinity-category of $\Bbbk$-modules, by \textit{limits and colimits} we mean homotopy limits and colimits, by \textit{tensor product} we mean derived tensor product, and so forth.
	\item Our main references for the homotopy theory of mixed graded complexes are provided by \cite{PTVV} and \cite{CPTVV}.
	\item Our main references for the homotopy theory of Lie algebras in characteristic $0$, and its relationship with the derived deformation theory, are provided by \cite{dagx} and \cite{studyindag2}.
	\item When dealing with explicit models provided by chain complexes of $\Bbbk$-modules, we use a homological notation. 
	\item Our standing assumption is that we work in characteristic $0$, over a fixed commutative ring  $\Bbbk$.
	\item Throughout this paper, we shall often work with closed symmetric monoidal \infinity-categories $\scrC^{\otimes}$ enriched over $\Bbbk$-modules. In particular, such \infinity-categories are endowed with both internal mapping objects, obtained as a right adjoint to $\otimes$, and mapping $\Bbbk$-modules providing the enrichment over $\Mod_{\Bbbk}$. In order to avoid confusion, we shall denote the former with $\underline{\smash{\Map}}$ and the latter with $\Map$.
\end{itemize}
\addtocontents{toc}{\protect\setcounter{tocdepth}{2}}
\section{Mixed graded modules}
\label{chapter:mixedgradedmodules}
The fundamental objects of study in this article are \textit{mixed graded $\Bbbk$-modules}, which generalize the concept of \textit{mixed complexes} and provide - at least in characteristic $0$ - a very useful analogue to complexes endowed with a complete and exhaustive filtration. Mixed graded modules have been studied extensively in the last years in the field of derived differential geometry and theory of Lie algebroids; yet, they are not as well known as filtered $\Bbbk$-modules, of which they provide a more well-behaved analogue in characteristic $0$. In this first section, we first gather some important definitions and properties of the $\infinity$-category of mixed graded modules, and fix our notations. For this scope, our main sources are \cite{PTVV} and \cite{CPTVV}.
\subsection{Basic definitions and notations}
\label{sec:basicmixedgraded}
In order to capture the idea behind the notion of a mixed graded $\Bbbk$-module, we first recall the concept of \textit{mixed $\Bbbk$-modules} (or \textit{mixed complexes}, as they are classically called), which - in the words of \cite{cyclichomology}, are objects that are \textit{both chain and cochain complexes in a compatible way}. Mixed complexes were first introduced in \cite{burghelea}, as \textit{algebraic $S^1$-chain complexes} (or \textit{chain complexes with an algebraic circle action}), in order to study Hochschild and cyclic homology of unital associative algebras in characteristic $0$, which naturally come equipped with a mixed structure at the level of chains. 
\begin{defn}[Mixed complexes, \cite{cyclichomology}]
	\label{def:mixedcomplexesclassical}
	A \textit{mixed complex} over a base ring $\Bbbk$ of characteristic $0$ is a chain complex $\lp C_{\bullet},\hsp\partial_{\bullet}\rp$ together with morphisms $\beta_{n}\colon C_{n}\to C_{n+1}$ such that $\beta_{n+1}\circ \beta_n=\partial_{n+1}\circ\beta_n+\beta_{n-1}\circ\partial_n=0$.
\end{defn}Equivalently, mixed complexes are modules over the free differential graded commutative algebra $\Bbbk[\eta]\coloneqq\Bbbk[t]/(t^2)$, where $\eta\coloneqq \bar{t}$ is a generator in homological degree $1$ and $\partial(\eta)=0$. Alternatively, they are comodules over the differential graded cocommutative coalgebra $\Bbbk[\varepsilon]=\lp\Bbbk[\eta]\rp^{\vee}$ which is the (differential graded) $\Bbbk$-linear dual of $\Bbbk[\eta]$. This will be the stepping stone for generalizing the idea of mixed complexes to the derived setting.
\begin{parag}
	Let $\BGa$ the classifying stack for the affine smooth group scheme $\AAff^1\coloneqq\Spec(\Bbbk[t])$: it is an affine group stack which, in any characteristic, is equivalent to the spectrum of the derived commutative ring $\Sym_{\Bbbk}(\Bbbk[-1])$. However, when $\Bbbk$ is a base ring which contains $\QQ$, it is well-known that $$\Sym_{\Bbbk}(\Bbbk[-1])\simeq \Bbbk\oplus \Bbbk[-1]\eqqcolon \Bbbk[\varepsilon]$$with its square-zero extension commutative algebra structure. In this case, we can describe the semi-direct product $\BGagm$ of the affine group stacks $\BGa$ and $\Gm$ as the affine group stack whose algebra of functions is equivalent, as a commutative $\Bbbk$-algebra, to the formal $\Bbbk$-algebra $$\Bbbk\left[t,\hsp t^{-1}\right]\otimes_{\Bbbk}\lp\Bbbk\oplus\Bbbk\left[-1\right]\rp\simeq \Bbbk\left[t,\hsp t^{-1}\right]\oplus\Bbbk\left[t,\hsp t^{-1}\right][-1].$$Denoting again the generator in degree $-1$ with $\varepsilon$, the comultiplication for its cocommutative Hopf structure is given by the assignations $t\mapsto t\otimes t$ and $\varepsilon\mapsto t\otimes \varepsilon$.
\end{parag}
\begin{defn}
	\label{def:mixedgraded}
	The \textit{$\infinity$-category of mixed graded $\Bbbk$-modules}$$\Modmixgr_{\Bbbk}\coloneqq\Rep_{\BGagm}\simeq {\Qcoh}{\lp\mathsf{B}{\lp\BGagm\rp}\rp}$$ is the $\infinity$-category of representations of the derived group stack $\BGagm$. Equivalently, it is the $\infinity$-category of comodule objects for the Hopf algebra $\mathscr{O}_{\BGagm}$ in $\Mod_{\Bbbk}$.
\end{defn}
\begin{remark}
	\label{remark:mixedgradedexplicit}
	In the setting of commutative differential graded $\Bbbk$-algebras and chain complexes, a mixed graded $\Bbbk$-module can be thought as a chain complex of $\Bbbk$-modules $M_{\bullet}$, equipped with a decomposition of chain complexes of $\Bbbk$-modules $\left\{ \lp M_{\bullet}\rp_p\right\}_{p\in\ZZ}$ and with a morphism of chain complexes $$\varepsilon_p\colon \lp M_{\bullet}\rp_p\longrightarrow \lp M_{\bullet}\rp_{p-1}[-1]$$such that $\varepsilon_{p-1}[-1]\circ \varepsilon_p = 0$ for all $p\in\ZZ$.  The chain complex $\lp M_{\bullet}\rp_p$ is the \textit{$p$ weight component} of the mixed graded $\Bbbk$-module $M_{\bullet}$, while the morphism $\varepsilon$ is the \textit{mixed differential}. In the description given in Definition \ref{def:mixedgraded}, the action of $\Gm$ yields the weight grading, while the action of $\BGa$ yields the mixed differential; the fact that we are considering the semi-direct product assures us that the mixed differential decreases the weight grading by $-1$, i.e. the two actions are intertwined. See also \cite[Remark $1.1$]{PTVV}.
\end{remark}
\begin{remark}
	One could notice that in the explicit models provided by \cref{remark:mixedgradedexplicit} the internal differential and the mixed differential \textit{commute}, while in the classical notion of mixed complexes of \cref{def:mixedcomplexesclassical} they were required to \textit{anti-commute}. But since we are working with bi-graded objects, it is a standard computation to show that the two formalisms are completely equivalent, up to suitably changing the signs of the mixed differential.
\end{remark}
\begin{parag}
	\label{parag:monoidalstructureonmodmixgr}
	The $\infinity$-category of mixed graded $\Bbbk$-modules, being the \infinity-category of quasi-coherent sheaves over a derived stack, is naturally stable and it is endowed with a symmetric closed monoidal structure. Viewing $\mathscr{O}_{\BGagm}$-comodules as graded $\Bbbk$-modules endowed with a mixed differential, we can describe the internal tensor product, the internal mapping space and the unit for such monoidal structure as follows.\begin{enumerate}
		\item Given two mixed graded $\Bbbk$-modules $M_{\bullet}$ and $N_{\bullet}$, the tensor product $M_{\bullet}\otimesmixgr_{\Bbbk}N_{\bullet}$ is the mixed graded $\Bbbk$-module whose $p$-th weight component is given by the formula$$\lp M_{\bullet}\otimesmixgr_{\Bbbk} N_{\bullet}\rp_p\coloneqq\bigoplus_{i+j=p} M_i\otimes_{\Bbbk}N_j$$with mixed differential defined on every summand by the formula$$\varepsilon_M\otimes\operatorname{id}_N+\operatorname{id}_M\otimes\varepsilon_N\colon M_i\otimes N_j\longrightarrow\lp M_{i-1}\otimes N_j\rp \bigoplus \lp M_i\otimes N_{j-1}\rp[-1].$$
		\item The unit for $\otimesmixgr_{\Bbbk}$ is the mixed graded $\Bbbk$-module $\Bbbk(0)$, consisting of $\Bbbk$ sitting in pure weight $0$ with trivial mixed structure.
		\item Given two mixed graded $\Bbbk$-modules $M_{\bullet}$ and $N_{\bullet}$, the internal mapping space ${\Mapmixgr_{\Bbbk}}{\lp M_{\bullet},\hsp N_{\bullet}\rp}$ is the mixed graded $\Bbbk$-module whose $p$-th weight component is given by the formula$$\lp{\Mapmixgr_{\Bbbk}}{\lp M_{\bullet},\hsp N_{\bullet}\rp}\rp_p\coloneqq\prod_{q\in\ZZ}{{\Map_{\Mod_{\Bbbk}}}{\lp M_{q},\hsp N_{q+p}\rp}}$$with mixed differential$$\varepsilon_p\colon {\Mapmixgr_{\Bbbk}}{\lp M_{\bullet},\hsp N_{\bullet}\rp}_p\longrightarrow {\Mapmixgr_{\Bbbk}}{\lp M_{\bullet},\hsp N_{\bullet}\rp}_{p-1}[-1]$$given by the morphism whose $r$-th component is the sum of the morphism
		\begin{displaymath}
		\begin{tikzpicture}
		\node (a) at (-5,-0.2){$\underset{q\in\ZZ}{\prod}{\Map_{\Mod_{\Bbbk}}}{\lp M_{q},\hsp N_{q+p}\rp}$};
		\node (b) at (0,0){${\Map_{\Mod_{\Bbbk}}}{\lp M_{r},\hsp N_{r+p}\rp}$};
		\node (c) at (0,-1.5){${\Map_{\Mod_{\Bbbk}}}{\lp M_{r},\hsp N_{r+p-1}[-1]\rp}$};
		\node (d) at (4.9,-1.5){$\simeq {\Map_{\Mod_{\Bbbk}}}{\lp M_{r},\hsp N_{r+p-1}\rp}[-1]$};
		\draw[->,font=\scriptsize] (-2.9,0) to node [above]{$\pi_r$} (b);
		\draw[->,font=\scriptsize] (b) to node [right]{$\varepsilon_{r+p}^{N}\circ-$} (c);
		\end{tikzpicture}
		\end{displaymath}with the morphism\begin{displaymath}
		\begin{tikzpicture}
		\node (a) at (-3.5,-0.2){$\underset{q\in\ZZ}{\prod}{\Map_{\Mod_{\Bbbk}}}{\lp M_{q},\hsp N_{q+p}\rp}$};
		\node (b) at (1.5,0){${\Map_{\Mod_{\Bbbk}}}{\lp M_{r-1},\hsp N_{r-1+p}\rp}$};
		\node (c) at (1.5,-1.5){${\Map_{\Mod_{\Bbbk}}}{\lp M_{r}[1],\hsp N_{r-1+p}\rp}$};
		\node (d) at (6.3,-1.5){$\simeq {\Map_{\Mod_{\Bbbk}}}{\lp M_{r},\hsp N_{r+p-1}\rp}[-1].$};
		\draw[->,font=\scriptsize] (-1.4,0) to node [above]{$\pi_{r}$} (b);
		\draw[->,font=\scriptsize] (b) to node [right]{$-\circ\varepsilon_{r}^{M}[1]$} (c);
		\end{tikzpicture}
		\end{displaymath}
	\end{enumerate}
	The enrichment of $\Modmixgr_{\Bbbk}$ over $\Mod_{\Bbbk}$ is then given by$$\Map_{\Modmixgr_{\Bbbk}}{\lp M_{\bullet},\hsp N_{\bullet}\rp}\coloneqq\fib\lp\Mapmixgr_{\Bbbk}{\lp M_{\bullet},\hsp N_{\bullet}\rp}_0\overset{\varepsilon_0}{\longrightarrow}\Mapmixgr_{\Bbbk}{\lp M_{\bullet},\hsp N_{\bullet}\rp}_{-1}[-1]\rp.$$See also \cite[Section $1.1$]{CPTVV}.
\end{parag}
For future reference, we provide also a simple result about fully dualizable objects (in the sense of \cite[Section $4.6.1$]{ha}).
\begin{notation}
	\label{notation:mixedperfbounded}
	In the following, we shall denote by $\varepsilon\operatorname{-}\Perf^{{\operatorname{gr},-}}_{\Bbbk}$ the full sub-\infinity-category of $\Modmixgr_{\Bbbk}$ spanned by those mixed graded $\Bbbk$-modules $M_{\bullet}$ which are perfect in each weight and such that $M_p\simeq 0$ for all $p\gg 0$. Dually, we shall denote by $\varepsilon\operatorname{-}\Perf^{{\operatorname{gr},+}}_{\Bbbk}$ the full sub-\infinity-category of $\Modmixgr_{\Bbbk}$ spanned by those mixed graded $\Bbbk$-modules $M_{\bullet}$ which are perfect in each weight and such that $M_p\simeq 0$ for all $p\ll 0$. In a similar fashion, we shall denote by $\varepsilon\text{-}\Perf_{\Bbbk}^{\gr,\geqslant p}$ (respectively, $\varepsilon\text{-}\Perf_{\Bbbk}^{\gr,\leqslant q}$) the full sub-\infinity-category of $\Modmixgr_{\Bbbk}$ spanned by those mixed graded $\Bbbk$-modules $M_{\bullet}$ which are perfect in each weight and such that $M_n\simeq 0$ for all $n<p$ (respectively, for all $n>q$). Let us remark that we have inclusions of $\infinity$-categories$$\varepsilon\text{-}\Perf_{\Bbbk}^{\gr,\geqslant p}\subseteq \varepsilon\operatorname{-}\Perf^{{\operatorname{gr},+}}_{\Bbbk}$$and$$\varepsilon\text{-}\Perf_{\Bbbk}^{\gr,\leqslant q}\subseteq \varepsilon\operatorname{-}\Perf^{{\operatorname{gr},-}}_{\Bbbk}$$for all integers $p$ and $q$.
\end{notation}
\begin{propositionn}
	\label{prop:dualizableobjects}
	The full sub-\infinity-category of fully dualizable objects of mixed graded $\Bbbk$-modules coincides with the \infinity-category$${\lp\varepsilon\operatorname{-}\Perf^{{\operatorname{gr},+}}_{\Bbbk}\rp}\bigcap{\lp\varepsilon\operatorname{-}\Perf^{{\operatorname{gr},-}}_{\Bbbk}\rp}.$$
\end{propositionn}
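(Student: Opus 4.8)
The plan is to prove the asserted equality of full \infinity-subcategories by two inclusions, using throughout the explicit description of the tensor product, the unit $\Bbbk(0)$, and the internal mapping object of $\Modmixgr_{\Bbbk}$ recalled above. Two standard facts will be used freely. First, in any stable closed symmetric monoidal \infinity-category the class of dualizable objects is closed under retracts, shifts, and finite (co)limits, hence under extensions. Second, for a mixed graded module $M_{\bullet}$ the only possible dual is $M_{\bullet}^{\vee}\coloneqq{\Mapmixgr_{\Bbbk}}{\lp M_{\bullet},\hsp\Bbbk(0)\rp}$, whose $p$-th weight component is, by the internal-mapping-object formula, the $\Bbbk$-linear dual $\lp M_{-p}\rp^{\vee}$, carrying the mixed differential dual to that of $M_{\bullet}$. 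Since $\Modmixgr_{\Bbbk}$ is stable, "fully dualizable" here just means "dualizable".

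For the inclusion $\supseteq$ (every module in the intersection is dualizable), I would first observe that a mixed graded module concentrated in a single weight $p$ automatically has vanishing mixed differential, so it is the module $V(p)$ given by $V\in\Mod_{\Bbbk}$ placed in weight $p$ with trivial mixed structure; when $V$ is perfect, the evaluation and coevaluation of the duality between $V$ and $V^{\vee}$ in $\Mod_{\Bbbk}$ lift verbatim (all mixed differentials in sight being zero) to morphisms of mixed graded modules, so $V(p)$ is dualizable with dual $V^{\vee}(-p)$. For a general $M_{\bullet}$ in the intersection — that is, perfect in every weight and supported in finitely many weights $a\leqslant p\leqslant b$ — I would use the weight filtration: since the mixed differential lowers the weight by one, for each $k$ the subobject $\bigoplus_{p\leqslant k}M_p$, with the restricted mixed differential, is a sub-mixed graded module, and the evident sequence $\bigoplus_{p\leqslant k}M_p\to M_{\bullet}\to\bigoplus_{p>k}M_p$ — with the truncated mixed differential on the quotient — is a cofiber sequence, since it becomes a split one under the conservative exact forgetful functor to graded modules. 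Iterating exhibits $M_{\bullet}$ as a finite successive extension of single-weight perfect modules, hence dualizable.

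For the inclusion $\subseteq$ (every dualizable module lies in the intersection), suppose $M_{\bullet}$ is dualizable, with duality data $\eta\colon\Bbbk(0)\to M_{\bullet}\otimesmixgr_{\Bbbk}N_{\bullet}$ and $\epsilon\colon N_{\bullet}\otimesmixgr_{\Bbbk}M_{\bullet}\to\Bbbk(0)$ obeying the triangle identities. The weight-$0$ component of $\eta$ is a morphism $\Bbbk\to\bigoplus_{p\in\ZZ}M_p\otimes_{\Bbbk}N_{-p}$ in $\Mod_{\Bbbk}$, and since $\Bbbk$ is a compact object of $\Mod_{\Bbbk}$ it factors through a finite subsum $\bigoplus_{p\in T}M_p\otimes_{\Bbbk}N_{-p}$. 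I would then extract the weight-$c$ component of the triangle identity $\lp\operatorname{id}_{M_{\bullet}}\otimes\epsilon\rp\circ\lp\eta\otimes\operatorname{id}_{M_{\bullet}}\rp=\operatorname{id}_{M_{\bullet}}$: because $\epsilon$ is valued in $\Bbbk(0)$, concentrated in weight $0$, it annihilates every nonzero weight, so the only summand surviving the contraction is the one indexed by $c$. Hence the weight-$c$ part of the composite is $0$ when $c\notin T$, and when $c\in T$ it is the round-trip $M_c\to M_c\otimes_{\Bbbk}N_{-c}\otimes_{\Bbbk}M_c\to M_c$ assembled from the pertinent components of $\eta$ and $\epsilon$. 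The triangle identity forces this to equal $\operatorname{id}_{M_c}$, so $M_c\simeq0$ for every $c\notin T$, i.e.\ $M_{\bullet}$ has finite support; and that round-trip is precisely one of the two triangle identities for the pair $\lp M_c,N_{-c}\rp$ in $\Mod_{\Bbbk}$, while reading off the weight components of the second triangle identity for $M_{\bullet}$ supplies the other — so each $M_c$ is dualizable, hence perfect, in $\Mod_{\Bbbk}$. Thus $M_{\bullet}$ lies in ${\lp\varepsilon\operatorname{-}\Perf^{{\operatorname{gr},+}}_{\Bbbk}\rp}\cap{\lp\varepsilon\operatorname{-}\Perf^{{\operatorname{gr},-}}_{\Bbbk}\rp}$.

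The main obstacle is this second inclusion, and the one genuinely delicate point in it is the weight bookkeeping in the triangle identities: the key observation is that $\epsilon$, having target the weight-$0$ unit, kills every other weight, which is exactly what collapses the a priori infinite sum produced by $\eta$ to its diagonal term and lets the compactness of $\Bbbk$ in $\Mod_{\Bbbk}$ force a finite support. One can also repackage this inclusion: the forgetful functor $\Modmixgr_{\Bbbk}\to\Modgr_{\Bbbk}$ is symmetric monoidal, conservative, and reads off the underlying weight components, so it reduces the claim to the analogous — marginally easier — statement for purely graded modules; the essential computation is, however, the same.
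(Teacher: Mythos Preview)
Your proof is correct, and both inclusions are handled by legitimate arguments that differ in flavor from the paper's.

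For the inclusion $\supseteq$, the paper does not use the weight filtration at all: instead it proves directly, as a lemma, that for $M_{\bullet}$ bounded and weightwise perfect the canonical map $M^{\vee}_{\bullet}\otimesmixgr_{\Bbbk}N_{\bullet}\to\Mapmixgr_{\Bbbk}(M_{\bullet},\hsp N_{\bullet})$ is an equivalence for every $N_{\bullet}$ (a weight-by-weight computation: the product defining the internal Hom collapses to a finite sum under the boundedness hypothesis, and perfectness in each weight lets one pull the tensor inside each factor), and then invokes \cite[Lemma $4.6.1.6$]{ha}. Your extension argument is structurally cleaner and avoids appealing to that criterion, but the paper's route yields the internal-Hom equivalence as a byproduct, which is independently useful.

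For the inclusion $\subseteq$, both arguments pass (explicitly in the paper, implicitly in yours, as you note at the end) through the symmetric monoidal forgetful functor to $\Modgr_{\Bbbk}$, and both extract perfectness of each $M_p$ by projecting the coevaluation to a single weight. The boundedness arguments differ: you use compactness of $\Bbbk$ in $\Mod_{\Bbbk}$ to force the weight-$0$ component of the coevaluation to factor through a finite subsum, and then the triangle identity kills all weights outside that finite set; the paper instead observes that dualizability forces $M^{\vee}_{\bullet}\otimes^{\gr}_{\Bbbk}M_{\bullet}\to\Mapin^{\gr}_{\Bbbk}(M_{\bullet},\hsp M_{\bullet})$ to be an equivalence, and in weight $0$ this is a direct-sum-versus-product comparison that fails unless the grading is bounded. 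Your argument is slightly more direct and delivers finite support and weightwise perfectness in one stroke.
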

\begin{proof}
	Let us denote by $M^{\vee}_{\bullet}$ the mixed graded $\Bbbk$-linear dual $\Mapmixgr_{\Bbbk}{\lp M_{\bullet},\hsp\Bbbk(0)\rp}$ of $M_{\bullet}$. We have an obvious evaluation morphism $M^{\vee}_{\bullet}\otimesmixgr_{\Bbbk} M_{\bullet}\longrightarrow\Bbbk(0)$ given  by the adjoint to the identity of $M^{\vee}_{\bullet}$. In virtue of \cite[Lemma $4.6.1.6$]{ha}, extending such evaluation morphism to a datum of full dualizability is equivalent to showing that tensoring with the evaluation produces an equivalence of $\Bbbk$-modules$$\Map_{\Modmixgr_{\Bbbk}}{\lp P_{\bullet},\hsp M^{\vee}_{\bullet}\otimesmixgr_{\Bbbk}N_{\bullet}\rp}\overset{\simeq}{\longrightarrow}\Map_{\Mod_{\Bbbk}}{\lp P_{\bullet}\otimesmixgr_{\Bbbk}M_{\bullet},\hsp N_{\bullet}\rp}$$for any mixed graded $\Bbbk$-modules $N_{\bullet}$ and $P_{\bullet}.$ This will be a consequence of the following Lemma.
	\begin{lemman}
		\label{lemma:dualizableobjects}
		For any $M_{\bullet}$ in ${\lp\varepsilon\operatorname{-}\Perf^{{\operatorname{gr},+}}_{\Bbbk}\rp}\cap{\lp\varepsilon\operatorname{-}\Perf^{{\operatorname{gr},-}}_{\Bbbk}\rp}$ and for an arbitrary mixed graded $\Bbbk$-module $N_{\bullet}$, the natural map of mixed graded $\Bbbk$-modules$$M^{\vee}_{\bullet}\otimesmixgr_{\Bbbk}N_{\bullet}\longrightarrow\Mapmixgr_{\Bbbk}{\lp M_{\bullet},\hsp N_{\bullet}\rp}$$is an equivalence.
	\end{lemman}
	\begin{proof}Let us recall how this map is defined. We have a chain of equivalences of mapping $\Bbbk$-modules\begin{align*}
		\Map_{\Modmixgr_{\Bbbk}}{\lp M_{\bullet}^{\vee}\otimesmixgr_{\Bbbk}N_{\bullet},\hsp \Mapmixgr_{\Bbbk}{\lp M_{\bullet},\hsp N_{\bullet}\rp}\rp}&\simeq \Map_{\Modmixgr_{\Bbbk}}{\lp M_{\bullet}^{\vee}\otimesmixgr_{\Bbbk}N_{\bullet}\otimesmixgr_{\Bbbk} M_{\bullet},\hsp N_{\bullet}\rp}\\&\simeq\Map_{\Modmixgr_{\Bbbk}}{\lp M_{\bullet}^{\vee}\otimesmixgr_{\Bbbk}M_{\bullet}\otimesmixgr_{\Bbbk} N_{\bullet},\hsp N_{\bullet}\rp}
		\end{align*}
		and so, tensoring the evaluation map $M_{\bullet}^{\vee}\otimesmixgr_{\Bbbk}M_{\bullet}\longrightarrow\Bbbk(0)$ with the identity of $N_{\bullet}$ one has the desired map $M^{\vee}_{\bullet}\otimesmixgr_{\Bbbk}N_{\bullet}\longrightarrow\Mapmixgr_{\Bbbk}{\lp M_{\bullet},\hsp N_{\bullet}\rp}.$ To prove it is an equivalence, we check it weight-wise: the left hand side is described in weight $p$ by the formula$$\lp M^{\vee}_{\bullet}\otimesmixgr_{\Bbbk}N_{\bullet}\rp_p\simeq \bigoplus_{i+j=p}\Map_{\Mod_{\Bbbk}}{\lp M_{-i},\hsp \Bbbk\rp}\otimes_{\Bbbk}N_j$$whereas the right hand side is described in weight $p$ by the formula$$\Mapmixgr_{\Bbbk}{\lp M_{\bullet},\hsp N_{\bullet}\rp}_p\simeq \prod_{h\in\ZZ}\Map_{\Mod_{\Bbbk}}{\lp M_h,\hsp N_{h+p}\rp}.$$With a change of indices $i\coloneqq -h$ (hence $h+p=p-i=j$), since $M_{-i}$ is perfect in $\Mod_{\Bbbk}$ for any integer $i$, we can rewrite the latter as$$\prod_{i\in\ZZ}\Map_{\Mod_{\Bbbk}}{\lp M_{-i},\hsp N_j\rp}\simeq \prod_{i\in\ZZ}\Map_{\Mod_{\Bbbk}}{\lp M_{-i},\hsp \Bbbk\rp}\otimes_{\Bbbk}N_j.$$Finally, since the grading of $M_{\bullet}$ is bounded above and below, we know that $\Map_{\Mod_{\Bbbk}}{\lp M_{-i},\hsp \Bbbk\rp}$ can be non-zero only for finitely many indices, hence the product is actually a direct sum. Therefore, the map is an equivalence. 
	\end{proof}
	\cref{lemma:dualizableobjects} shows that $M^{\vee}_{\bullet}\otimesmixgr_{\Bbbk}N_{\bullet}\simeq\Mapmixgr_{\Bbbk}{\lp M_{\bullet},\hsp N_{\bullet}\rp}$ for any mixed graded $\Bbbk$-module $N_{\bullet}$ and for any mixed graded $\Bbbk$-module $M_{\bullet}$ which is bounded and perfect in each weight, and this equivalence is provided exactly by tensoring the adjoint map to the evaluation $M^{\vee}_{\bullet}\otimesmixgr_{\Bbbk}M_{\bullet}\to\Bbbk(0)$ with the identity of $N_{\bullet}$. This proves the second assertion of \cite[Lemma $4.6.1.6$]{ha}, hence the dualizability of $M_{\bullet}$.\\
	
	Proving that any fully dualizable mixed graded $\Bbbk$-module lies in ${\lp\varepsilon\operatorname{-}\Perf^{{\operatorname{gr},+}}_{\Bbbk}\rp}\cap{\lp\varepsilon\operatorname{-}\Perf^{{\operatorname{gr},-}}_{\Bbbk}\rp}$ relies on the description of dualizable graded $\Bbbk$-modules. Indeed, we have a forgetful \infinity-functor$$\oblv_{\varepsilon}\colon\Modmixgr_{\Bbbk}\longrightarrow\Modgr_{\Bbbk},$$	described (model independently) in the following way. Let us interpret mixed graded ${\Bbbk}$-modules as quasi-coherent $\mathscr{O}$-sheaves on ${\mathsf{B}}{\lp\BGagm\rp}$. We have a right split extension of group stacks
	\begin{align}
	\label{ses:groupstacks}
	\BGa\longrightarrow\BGagm\longrightarrow\Gm.
	\end{align}
	The splitting morphism $\Gm\to\BGagm$ induces in this way a morphism $\BGm\to{\mathsf{B}}{\lp\BGagm\rp}$, which induces a pullback $\infinity$-functor
	\begin{align}
	\label{functor:forgetful}
	\oblv_{\varepsilon}\colon{\Qcoh}{\lp{\mathsf{B}}{\lp\BGagm\rp}\rp}\longrightarrow{\Qcoh}{\lp\BGm\rp}.
	\end{align}
	By the known equivalence between $\Gm$-equivariant quasi-coherent sheaves on $\Spec({\Bbbk})$ and graded ${\Bbbk}$-modules, this reduces to the forgetful $\infinity$-functor that sends a graded mixed ${\Bbbk}$-module $M_{\bullet}$ to the underlying graded ${\Bbbk}$-module $M_{\bullet}$ by forgetting the mixed structure. In particular, the forgetful \infinity-functor $\oblv_{\varepsilon}\colon\Modmixgr_{\Bbbk}\to\Modgr_{\Bbbk}$, being a quasi-coherent pullback \infinity-functor, preserves both tensor products and internal mapping spaces. Hence, any dualizable object in $\Modmixgr_{\Bbbk}$, after forgetting the mixed structure, must become dualizable also in $\Modgr_{\Bbbk}$. So we just need to prove that any dualizable graded $\Bbbk$-module $M_{\bullet}$ is perfect in each weight and endowed with bounded grading. Indeed a map $\Bbbk(0)\to M_{\bullet}\otimes^{\gr}_{\Bbbk}M^{\vee}_{\bullet}$ corresponds to an element in the weight $0$ part of the $\Bbbk$-module$$\lp M_{\bullet}\otimes^{\gr}_{\Bbbk}M^{\vee}_{\bullet}\rp_0\simeq \bigoplus_{n\in\ZZ}M_n\otimes_{\Bbbk}\Map_{\Mod_{\Bbbk}}{\lp M_{n},\hsp\Bbbk\rp}.$$In particular, a coevaluation morphism for $M_{\bullet}$ in $\Modgr_{\Bbbk}$ provides, after post-composing with the natural map$$\bigoplus_{n\in\ZZ}M_n\otimes_{\Bbbk}\Map_{\Mod_{\Bbbk}}{\lp M_{n},\hsp\Bbbk\rp}\longrightarrow\prod_{n\in\ZZ}M_n\otimes_{\Bbbk}\Map_{\Mod_{\Bbbk}}{\lp M_n,\hsp\Bbbk\rp}$$and then projecting on the $p$-th direct summand, a coevaluation $\Bbbk\to M_p\otimes_{\Bbbk}\Map_{\Mod_{\Bbbk}}{\lp M_p,\hsp\Bbbk\rp}$ for any integer $p$. The fact that all the coherences of the definition of a coevaluation morphism are satisfied is a consequence of the fact that $\Bbbk(0)\to M_{\bullet}\otimes_{\Bbbk}^{\gr}M^{\vee}_{\bullet}$ is assumed to be a coevaluation itself: one can easily see it by checking the coherence diagram for the component in weight $-p$ of$$\operatorname{id}_{M^{\vee}_{\bullet}}\colon M^{\vee}_{\bullet}\xrightarrow{\operatorname{id}\otimes\operatorname{coev}} M^{\vee}_{\bullet}\otimesmixgr_{\Bbbk}M_{\bullet}\otimesmixgr_{\Bbbk}M^{\vee}_{\bullet}\xrightarrow{\operatorname{ev}\otimes\operatorname{id}} M^{\vee}_{\bullet}.$$By the characterization of dualizable objects in $\Mod_{\Bbbk}$, this shows that $M_p$ must be perfect for any integer $p$.\\
	For the statement about the upper and lower bound of the grading, let us remark that an analogous argument to the one in the proof of \cref{lemma:dualizableobjects} provides always a map$$\Mapin^{\gr}_{\Bbbk}{\lp M_{\bullet},\hsp\Bbbk(0)\rp}\otimes^{\gr}_{\Bbbk}N_{\bullet}\longrightarrow\Mapin^{\gr}_{\Bbbk}{\lp M_{\bullet},\hsp N_{\bullet}\rp}$$where $\Mapin^{\gr}_{\Bbbk}$ is the internal graded mapping $\Bbbk$-module \infinity-functor for the closed symmetric monoidal \infinity-category $\Modgr_{\Bbbk}$. If $M_{\bullet}$ is not bounded in both directions, in general it can fail to be an equivalence. For example, if $M_{p}\not\simeq0$ for all non-negative integers $p$, considering $N_{\bullet}\coloneqq M_{\bullet}$, then the weight $0$ component of the above map is described by$$\bigoplus_{n\in\ZZ}\Map_{\Mod_{\Bbbk}}{\lp M_{n},\hsp \Bbbk\rp}\otimes_{\Bbbk}M_n\longrightarrow\prod_{n\in\ZZ} \Map_{\Mod_{\Bbbk}}{\lp M_n,\hsp M_n\rp}$$which of course can never be an equivalence without the boundness assumption.
\end{proof}
\begin{remark}
	The $\infinity$-category $\Modmixgr_{\Bbbk}$ is equivalent, as a stable symmetric monoidal \infinity-category, to the $\infinity$-category of comodules on $\mathscr{O}_{\BGagm}$. We  can alternatively consider the $\infinity$-category of comodules on $\mathscr{O}_{\Omega_0\Ga\rtimes \Gm}$, that is the $\infinity$-category of comodules on the algebra of functions on the affine group stack $$\Omega_0\Ga\rtimes \Gm\simeq {\Spec}{\lp\Bbbk\left[t,\hsp t^{-1}\right]\otimes_{\Bbbk}\lp\Bbbk\oplus\Bbbk[1]\rp\rp}.$$The two theories are equivalent: in the latter case, the mixed differential $\varepsilon$ is a morphism of degree $-1$ instead of degree $1$ (i.e., the mixed structure is the datum of a map $\varepsilon_p\colon M_p \to M_{p-1}[1]$). An explicit equivalence between the two comodule theories simply sends a comodule $M_{\bullet}$ over $\mathscr{O}_{\BGagm}$ to the comodule over $\mathscr{O}_{\Omega_0\Ga\rtimes \Gm}$ given in weight $p$ by the $\Bbbk$-module $M_p[-2p]$. See also \cite[Remark $1.1.3$]{CPTVV}.
\end{remark}
\begin{parag}
	\label{parag:modeladjunction}
	Let ${\Bbbk}(p)$ denote the mixed graded ${\Bbbk}$-module consisting of ${\Bbbk}$ sitting in pure weight $p$ and homological degree $0$. We have an adjunction
	\begin{align}
	\label{adjunction:realizationadjunction}
	\begin{tikzpicture}[scale=0.75,baseline=-0.5ex]
	\node at (-3.5,0){$-\otimes_{\Bbbk} \Bbbk(0)\colon$};
	\node (a) at (-1.5,0){$\Mod_{\Bbbk}$};
	\node (b) at (1.5,0){$\Modmixgr_{\Bbbk}$};
	\node at (3.1,-0){$: \left|-\right|$};
	\draw[->] ([yshift=3.5pt]a.east) -- ([yshift=3.5pt]b.west);
	\draw[->] ([yshift=-3.5pt]b.west) -- ([yshift=-3.5pt]a.east);
	\end{tikzpicture}
	\end{align}
	where the left adjoint simply sends a $\Bbbk$-module $M$ to the mixed graded $\Bbbk$-module consisting of $M$ concentrated in weight $0$, and the right adjoint $\left|-\right|$ is the $\infinity$-functor that sends a mixed graded ${\Bbbk}$-module $M_{\bullet}$ to the mapping $\Bbbk$-module$$\left|M_{\bullet}\right|\coloneqq\Map_{\Modmixgr_{\Bbbk}}{\lp {\Bbbk}(0),\hsp M_{\bullet}\rp}.$$This right adjoint is called the \textit{realization $\infinity$-functor}: a strict model of $\left|M_{\bullet}\right|$ is provided by the chain complex of ${\Bbbk}$-modules $$\prod_{p\geqslant 0}M_{-p}[-2p]$$endowed with the total differential, sum of the usual differential of chain complexes and the mixed differential (\cite[Proposition $1.5.1$]{CPTVV}).
\end{parag}
\begin{notation}
	In the remainder of this work, the $\infinity$-functor $-\otimes_{\Bbbk}\Bbbk(q)\colon\Mod_{\Bbbk}\to\Modmixgr_{\Bbbk}$ which sends a $\Bbbk$-module $M$ to the mixed graded $\Bbbk$-module $M$ concentrated in weight $q$ with trivial mixed differential shall be denoted simply as $(-)(q)$.
\end{notation}
\begin{construction}
	For our purposes, it will be convenient to introduce another realization $\infinity$-functor that can keep track of the ${\Bbbk}$-modules in positive weights of a mixed graded module $M_{\bullet}$. Let us recall (\cite[Section $1.5$]{CPTVV}) that for all $p\in\ZZ$ we have that
	\begin{align}
	\label{map}
	{\Map_{\Modmixgr_{\Bbbk}}}{\lp {\Bbbk}(i)[-2i],\hsp {\Bbbk}(i-1)[-2(i-1)]\rp}\simeq {\Bbbk}.
	\end{align}
	So we have a pro-object in $\Modmixgr_{\Bbbk}$, defined by
	\begin{align}
	\label{taterealizationexplicit}
	{\Bbbk}(\infinity) \coloneqq\left\{\ldots\to {\Bbbk}(i)[-2i]\to {\Bbbk}(i-1)[-2(i-1)]\to\ldots \to {\Bbbk}(1)[-2]\to {\Bbbk}(0)\right\}
	\end{align}
	where the morphism ${\Bbbk}(i)[-2i]\to {\Bbbk}(i-1)[-2(i-1)]$ is the unique morphism corresponding to the unit $1$ of ${\Bbbk}$ under the equivalence \ref{map}.
\end{construction}
\begin{defn}[{\cite[Definition $1.5.2$]{CPTVV}}]
	\label{def:taterealization}
	The \textit{Tate} or \textit{stabilized realization} $\infinity$-functor is defined as
	\begin{displaymath}
	\left|-\right|^{\operatorname{t}}\coloneqq{\Map_{\Modmixgr_{\Bbbk}}}{\lp {\Bbbk}(\infinity),\hsp -\rp}\colon \Modmixgr_{\Bbbk}\longrightarrow \Ind\lp\Mod_{\Bbbk}\rp\overset{\colim}{\longrightarrow}\Mod_{\Bbbk}.
	\end{displaymath}
\end{defn}
\begin{parag}
	\label{parag:tateexplicit}
	Again, working with explicit models given by graded chain complexes and mixed differentials, the $\infinity$-functor of Definition \ref{def:taterealization} sends a mixed graded ${\Bbbk}$-module $M_{\bullet}=\{M_p\}_p$ to the ${\Bbbk}$-module $$\left|
	M_{\bullet}\right|^{\operatorname{t}}\coloneqq\underset{i\leqslant 0}{\colim}\prod_{p\geqslant i}M_{-p}[-2p]$$again endowed with the total differential. There is a natural transformation of $\infinity$-functors$$\left|-\right|\Rightarrow\left|-\right|^{\operatorname{t}}\colon\Modmixgr_{\Bbbk}\longrightarrow\Mod_{\Bbbk}$$which is induced by the map of pro-objects $\Bbbk(\infinity)\to\Bbbk(0)$ (the latter seen as a constant pro-object). Working with explicit models, we easily see that the natural transformation above is described, on a given mixed graded $\Bbbk$-module $M_{\bullet}$, as the inclusion$$\prod_{p\geqslant 0}M_{-p}[-2p]\longhookrightarrow\underset{i\leqslant 0}{\colim}\prod_{p\geqslant i}M_{-p}[-2p].$$In particular, the map above is an equivalence whenever $M_{\bullet}$ is trivial in all positive weights. 
\end{parag}
\begin{construction}
	For all $p\in\ZZ$ we have a \textit{$p$ weight part $\infinity$-functor}
	\begin{displaymath}
	(-)_p\colon \Modmixgr_{\Bbbk}\longrightarrow\Mod_{\Bbbk}
	\end{displaymath}
	described (model independently) in the following way. Let us recall the forgetful \infinity-functor $\oblv_{\varepsilon}\colon\Modmixgr_{\Bbbk}\to\Modgr_{\Bbbk}$ described in \ref{functor:forgetful} as the quasi-coherent pullback \infinity-functor induced by atlas $\mathsf{B}\Gm\to{\mathsf{B}{\lp\BGagm\rp}}$. Since we have an equivalence of $\infinity$-categories $$\Modgr_{\Bbbk}\simeq \prod_p\Mod_{\Bbbk},$$one can project onto the $p$-th coordinate: this composition yields the desired $\infinity$-functor.\\
	The $\infinity$-functor $(-)_0$ has a left adjoint \begin{align}
	\label{functor:0weightadjoint}
	\operatorname{Free}_{\varepsilon}\colon \Mod_{\Bbbk}\longrightarrow\Modmixgr_{\Bbbk}
	\end{align} (the free mixed graded ${\Bbbk}$-module construction, see \cite[$1.4.1$]{CPTVV}). 
\end{construction}
\begin{parag}
	\label{parag:freeleftadjointmixedgraded}
	By pre-composing $(-)_0$ with the weight shift by $q$ on the left $\infinity$-endofunctor
	\begin{displaymath}
	(-)\lpd q\rpd \coloneqq-\otimesmixgr_{\Bbbk} {\Bbbk}(q)\colon \Modmixgr_{\Bbbk}\longrightarrow\Modmixgr_{\Bbbk}
	\end{displaymath}
	which informally sends a mixed graded ${\Bbbk}$-module $M_{\bullet}=\left\{M_p\right\}_{\!p\in\ZZ}$ to the mixed graded ${\Bbbk}$-module $M\lpd q\rpd_{\bullet}\coloneqq\left\{M_{p-q}\right\}_{\!p\in\ZZ}$, and by post-composing $\operatorname{Free}_{\varepsilon}$ with the shift $\infinity$-functor $\lpd q\rpd$ we obtain for all $q\in\ZZ$ another adjunction 
	\begin{align}
	\label{adjunction:weightadjunction}
	\begin{tikzpicture}[scale=0.75,baseline=0.5ex]
	\node at (-3.5,0){$\operatorname{Free}_{\varepsilon}\lpd q\rpd \colon$};
	\node (a) at (-1.5,0){$\Mod_{\Bbbk}$};
	\node (b) at (1.5,0){$\Modmixgr_{\Bbbk}$};
	\node at (3.4,-0){$\colon(-)_q$};
	\draw[->] ([yshift=3pt]a.east) -- ([yshift=3pt]b.west);
	\draw[->] ([yshift=-3pt]b.west) -- ([yshift=-3pt]a.east);
	\end{tikzpicture}
	\end{align}
	which generalizes the adjunction \ref{functor:0weightadjoint} to all weights. Let us remark that the description of this left adjoint is  very explicit: indeed, the proof of \cite[Proposition $1.3.8$]{CPTVV} together with the observations made in \cite[Section $1.4.1$]{CPTVV} shows that such left adjoint simply sends a $\Bbbk$-module $M$ to the mixed graded $\Bbbk$-module $\operatorname{Free}_{\varepsilon}(M)\lpd q\rpd$ consisting of $M$ in weight $q$, $M[1]$ in weight $q-1$, and with mixed structure given by the natural equivalence $M\simeq M[1][-1]$.
\end{parag}
Let us remark that, for any integer $q$, one has an adjoint pair	$$
\begin{tikzpicture}[scale=0.75,baseline=0.5ex]
\node at (-3.2,0){$(-)(q)\colon$};
\node (a) at (-1.5,0){$\Mod_{\Bbbk}$};
\node (b) at (1.5,0){$\Modgr_{\Bbbk}$};
\node at (3.1,-0){$\colon(-)_q$};
\draw[->] ([yshift=3pt]a.east) -- ([yshift=3pt]b.west);
\draw[->] ([yshift=-3pt]b.west) -- ([yshift=-3pt]a.east);
\end{tikzpicture}$$
which is an ambidextrous adjunction. The $\infinity$-functor $(-)_q\colon\Modmixgr_{\Bbbk}\to\Mod_{\Bbbk}$ is canonically equivalent to the composition $$\Modmixgr_{\Bbbk}\xrightarrow{\oblv_{\varepsilon}}\Modgr_{\Bbbk}\xrightarrow{(-)_q}\Mod_{\Bbbk},$$so the existence of the adjoint $\operatorname{Free}_{\varepsilon}\lpd q\rpd$ yields the following result.
\begin{lemman}
	\label{lemma:colimitsinmixgr}
	Limits and colimits in $\Modmixgr_{\Bbbk}$ are computed weight-wise. 
\end{lemman}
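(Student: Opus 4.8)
The plan is to deduce the statement from two facts: that for every $q\in\ZZ$ the weight part $\infinity$-functor $(-)_q\colon\Modmixgr_{\Bbbk}\to\Mod_{\Bbbk}$ preserves all small limits and colimits, and that the family $\left\{(-)_q\right\}_{q\in\ZZ}$ is jointly conservative. Since $\Modmixgr_{\Bbbk}$ is the $\infinity$-category of quasi-coherent sheaves on a derived stack, it is presentable, hence admits all small limits and colimits; so for any diagram $D\colon I\to\Modmixgr_{\Bbbk}$ one may form $\lim_I D$ and $\colim_I D$ internally, and the content of the lemma is exactly that the canonical comparison maps $\left(\lim_I D\right)_q\to\lim_I\left((-)_q\circ D\right)$ and $\colim_I\left((-)_q\circ D\right)\to\left(\colim_I D\right)_q$ are equivalences for all $q$. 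If each $(-)_q$ preserves the relevant (co)limits this is immediate, and joint conservativity additionally promotes this to the recognition statement that a cone (resp. cocone) in $\Modmixgr_{\Bbbk}$ is limiting (resp. colimiting) if and only if it is so in every weight.

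Preservation of limits by $(-)_q$ is immediate from the adjunction \ref{adjunction:weightadjunction}, in which $(-)_q$ appears as a right adjoint, namely the right adjoint to $\operatorname{Free}_{\varepsilon}\lpd q\rpd$. For preservation of colimits I would use the factorization recorded immediately before the statement, $(-)_q\simeq\pi_q\circ\oblv_{\varepsilon}$, where $\oblv_{\varepsilon}\colon\Modmixgr_{\Bbbk}\to\Modgr_{\Bbbk}$ is the forgetful $\infinity$-functor of \ref{functor:forgetful} and $\pi_q\colon\Modgr_{\Bbbk}\simeq\prod_p\Mod_{\Bbbk}\to\Mod_{\Bbbk}$ is the projection onto the $q$-th factor under the equivalence $\Modgr_{\Bbbk}\simeq\prod_p\Mod_{\Bbbk}$. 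The projection $\pi_q$ preserves all small limits and colimits, since both are computed factor-wise in a product of $\infinity$-categories; and $\oblv_{\varepsilon}$, being the quasi-coherent pullback $\infinity$-functor along the atlas $\mathsf{B}\Gm\to\mathsf{B}{\lp\BGagm\rp}$, is in particular a left adjoint (its right adjoint being the associated pushforward $\infinity$-functor), hence preserves all small colimits. Composing, $(-)_q$ preserves all small colimits.

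It remains to record joint conservativity of $\left\{(-)_q\right\}_{q\in\ZZ}$. The projections $\left\{\pi_q\right\}_{q\in\ZZ}$ are jointly conservative on $\Modgr_{\Bbbk}\simeq\prod_p\Mod_{\Bbbk}$, and $\oblv_{\varepsilon}$ is conservative: a morphism of mixed graded $\Bbbk$-modules is an equivalence precisely when its underlying morphism of graded $\Bbbk$-modules is one, since equivalences in $\Qcoh$ of the classifying stack $\mathsf{B}{\lp\BGagm\rp}$ are detected after pullback along the atlas $\mathsf{B}\Gm\to\mathsf{B}{\lp\BGagm\rp}$ (equivalently, the forgetful $\infinity$-functor out of comodules over the flat Hopf algebra $\mathscr{O}_{\BGagm}$ reflects equivalences). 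Hence $\left\{(-)_q\right\}_{q\in\ZZ}$ is jointly conservative, and combining this with the previous paragraph proves the lemma.

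The argument is entirely formal, and there is no genuine obstacle here; the only points that warrant a moment of care are that $\oblv_{\varepsilon}$ really does preserve colimits and is conservative, and both follow at once from its description as a quasi-coherent pullback along an atlas, so no computation is needed.
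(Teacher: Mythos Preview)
Your proof is correct and follows essentially the same approach as the paper: both argue that $\oblv_{\varepsilon}$, being a quasi-coherent pullback, preserves colimits, and that each $(-)_q$ preserves limits because it admits the left adjoint $\operatorname{Free}_{\varepsilon}\lpd q\rpd$. The only difference is that you make the joint conservativity of $\{(-)_q\}_{q\in\ZZ}$ explicit, whereas the paper leaves this implicit and instead phrases the conclusion as showing that $\oblv_{\varepsilon}$ itself preserves limits (by identifying it with the product $\prod_p(-)_p$); this is a cosmetic rather than a substantive distinction.
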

\begin{proof}
	Let us recall that the forgetful functor $\oblv_{\varepsilon}\colon \Modmixgr_{\Bbbk}\longrightarrow\Modgr_{\Bbbk}$ corresponds geometrically to the pullback functor \ref{functor:forgetful}. Every pullback $\infinity$-functor commutes with colimits (\cite[Proposition $2.7.17$]{dagviii}) and thus colimits in $\Modmixgr_{\Bbbk}$ are computed as in $\Modgr_{\Bbbk}$. But $$\Modgr_{\Bbbk}\simeq \prod_{q\in\ZZ}\Mod_{\Bbbk},$$and colimits on the right hand side are computed weight-wise.\\
	The assertion for limits is more subtle, since in general pullback $\infinity$-functors do not preserve them. But in this case, we can use the discussion of \ref{parag:freeleftadjointmixedgraded}: indeed, the composition$$(-)_p\colon\Modmixgr_{\Bbbk}\overset{\oblv_{\varepsilon}}{\longrightarrow}\Modgr_{\Bbbk}\longrightarrow\Mod_{\Bbbk}$$preserves limits for all $p$'s, since it admits an explicit left adjoint. The product$$\prod_p(-)_p\colon \Modmixgr_{\Bbbk}\overset{\oblv_{\varepsilon}}{\longrightarrow} \Modgr_{\Bbbk}\overset{\left\{(-)_p\right\}_p}{\longrightarrow}\prod_{p\in\ZZ}\Mod_{\Bbbk}\simeq\Modgr_{\Bbbk}$$is canonically equivalent to the forgetful \infinity-functor $\oblv_{\varepsilon}$, and it commutes with limits since it is the product of \infinity-functors which commute with limits. 
\end{proof}
\begin{porism}
	\label{porism:rightadjointforgetful}
	Since both $\Modmixgr_{\Bbbk}$ and $\Modgr_{\Bbbk}$ are presentable $\infinity$-categories, the forgetful $\infinity$-functor $\oblv_{\varepsilon}$ is both a left and right adjoint in virtue of the Adjoint Functor Theorem (\cite[Corollary $5.5.2.9$]{htt}). Such adjoints $\operatorname{L}_{\varepsilon}$ and $\operatorname{R}_{\varepsilon}$ can be described with explicit models in the following way.
	\begin{itemize}
		\item The left adjoint $\operatorname{L}_{\varepsilon}$ sends a graded $\Bbbk$-module $M_{\bullet}$ to the mixed graded $\Bbbk$-module $\operatorname{L}_{\varepsilon}M_{\bullet}$ defined in weight $p$ by the formula$$\lp\operatorname{L}_{\varepsilon}M_{\bullet}\rp_p\simeq M_p\oplus M_{p+1}[1],$$and whose $\BGa$-action is described by the morphism $$\varepsilon_p\colon M_p\oplus M_{p+1}[1]\longrightarrow\lp M_{p-1}\oplus M_{p-1+1} [1]\rp [-1]\simeq M_{p-1}[-1]\oplus M_p$$given by the canonical equivalence $M_p\simeq M_p[1][-1]$ and the zero map on $M_{p+1}[1]$.
		\item The right adjoint $\operatorname{R}_{\varepsilon}$ sends a graded $\Bbbk$-module $M_{\bullet}$ to the mixed graded $\Bbbk$-module $\operatorname{R}_{\varepsilon} M_{\bullet}$ defined in weight $p$ by the formula $\lp\operatorname{R}_{\varepsilon}M_{\bullet}\rp_p\simeq M_p\oplus M_{p-1}[-1]$, and whose $\BGa$-action is described by the morphism
		\begin{displaymath}
		\varepsilon_p\colon M_p\oplus M_{p-1}[-1]\longrightarrow \lp M_{p-1}\oplus M_{p-2}[-1]\rp [-1]\simeq M_{p-1}[-1]\oplus M_{p-2}[-2]
		\end{displaymath}
		given by the canonical equivalence $M_{p-1}[-1]\simeq M_{p-1}[-1]$ and the zero map on $M_p$.
	\end{itemize}
	It is now clear that the $\infinity$-functor $\operatorname{Free}_{\varepsilon}\lpd q\rpd$ of \ref{adjunction:weightadjunction} is canonical equivalent to the composition $$\operatorname{L}_{\varepsilon}\circ\hsp(-)(q)\colon\Mod_{\Bbbk}\longrightarrow\Modmixgr_{\Bbbk}.$$
\end{porism}
\begin{construction}
	\label{construction:trivialmixedstructure}
	We shall provide one last,  but very useful, construction that highlights how mixed graded $\Bbbk$-modules are to be thought as graded $\Bbbk$-modules with extra structure. Recall the right split extension of group stacks \ref{ses:groupstacks}. The morphism $\BGagm\to\Gm$ induces a pullback $\infinity$-functor at the level of quasi-coherent sheaves$$\triv_{\varepsilon}\colon{\Qcoh}{\lp\mathsf{B}\Gm\rp}\longrightarrow{\Qcoh}{\lp\mathsf{B}{\lp\BGagm\rp}\rp}.$$By functoriality, one has a natural equivalence
	\begin{align}
	\label{equivalence:oblvtriv}
	\operatorname{id}_{\Modgr_{\Bbbk}}\simeq\oblv_{\varepsilon}\circ\triv_{\varepsilon},
	\end{align}
	i.e., this $\infinity$-functor is a section of the forgetful $\infinity$-functor $\oblv_{\varepsilon}\colon\Modmixgr_{\Bbbk}\to\Modgr_{\Bbbk}$. In general, $\triv_{\varepsilon}$ has to be understood as the $\infinity$-functor that endows a graded $\Bbbk$-module with the trivial mixed structure, i.e., the zero morphism between each weight.
	\begin{warning}
		The $\infinity$-functor $\triv_{\varepsilon}\colon \Modgr_{\Bbbk}\to\Modmixgr_{\Bbbk}$ commutes with all limits and colimits since the forgetful $\infinity$-functor $\oblv_{\varepsilon}$ creates them, and $\oblv_{\varepsilon}\circ\triv_{\varepsilon}\simeq \operatorname{id}_{\Modgr_{\Bbbk}}$. Moreover, it is a straight-forward consequence of the discussion in \ref{parag:monoidalstructureonmodmixgr} that $\triv_{\varepsilon}$ is also strongly monoidal. It is, however, \textit{not} fully faithful: such pullback \infinity-functor is in fact the left adjoint to the push-forward \infinity-functor $\Modmixgr_{\Bbbk}\to\Modgr_{\Bbbk}$ which agrees with the \textit{weighted negative cyclic \infinity-functor} $\operatorname{NC}^{\operatorname{w}}\colon\Modmixgr_{\Bbbk}\to\Modgr_{\Bbbk}$ of \cite[Remark $1.5$]{PTVV}. Unraveling all definitions, this \infinity-functor sends any mixed graded $\Bbbk$-module $M_{\bullet}$ to the graded $\Bbbk$-module $\operatorname{NC}^{\operatorname{w}}(M_{\bullet})$ described in each weight by the formula$$\operatorname{NC}^{\operatorname{w}}(M_{\bullet})_p\coloneqq \left|M_{\bullet}\lpd p\rpd\right|.$$In particular, the unit for the adjunction $\triv_{\varepsilon}\dashv\operatorname{NC}^{\operatorname{w}}$ cannot be an equivalence. For example, if we consider the graded $\Bbbk$-module consisting of $\Bbbk$ sitting in pure weight $0$, then the formula provided in \cite{PTVV} for the weighted negative cyclic \infinity-functor yields that$$\operatorname{NC}^{\operatorname{w}}(\triv_{\varepsilon}\Bbbk(0))_p\simeq \Bbbk[-2p]$$for all $p\geqslant 0.$ However, the \infinity-functor $\triv_{\varepsilon}$ is fully faithful on those graded $\Bbbk$-modules which are concentrated in a single weight: indeed, the right adjoint of the \infinity-functor\begin{align}
		\label{functor:insertioninweightq}
		(-)(q)\coloneqq \triv_{\varepsilon}\circ\hsp (-)(q)\colon\Mod_{\Bbbk}\longrightarrow\Modgr_{\Bbbk}
		\longrightarrow\Modmixgr_{\Bbbk}
		\end{align}is the composition of $\operatorname{NC}^{\operatorname{w}}$ with the projection on the $q$-th weight component. In particular, the unit map$$M\longrightarrow \lp\operatorname{NC}^{\operatorname{w}}(\triv_{\varepsilon}M(q))\rp_q$$for the adjunction $\triv_{\varepsilon}\circ\hsp (-)(q)\dashv (-)_q\circ \operatorname{NC}^{\operatorname{w}}$ is an equivalence, and since $(-)(q)$ is fully faithful for any integer $q$ we have a chain of equivalences of mapping spaces\begin{align*}
		\Map_{\Modgr_{\Bbbk}}\lp M(q),\hsp N(q)\rp &\simeq\Map_{\Mod_{\Bbbk}}\lp M,\hsp N\rp  \\&\simeq \Map_{\Modmixgr_{\Bbbk}}\lp \triv_{\varepsilon}(M(q)),\hsp \triv_{\varepsilon}(N(q))\rp.
		\end{align*}
	\end{warning}
\end{construction}
\subsection{Naive and clever truncations}For any integer $p$, we can consider the full sub-$\infinity$-category $\varepsilon\operatorname{-}\Mod^{{\operatorname{gr},\leqslant p}}_{\Bbbk}$ of $\Modmixgr_{\Bbbk}$ spanned by all those mixed graded $\Bbbk$-modules whose $q$ weight component is $0$ whenever $q>p$.\\The  inclusion $$\iota_{\leqslant p}\colon \varepsilon\operatorname{-}\Mod^{{\operatorname{gr},\leqslant p}}_{\Bbbk}\subseteq \Modmixgr_{\Bbbk}$$commutes with all limits and colimits, since they are computed weight-wise (\cref{lemma:colimitsinmixgr}).
It follows that for all integers $p$ the inclusion $\iota_{\leqslant p}$, being a limit and colimit preserving $\infinity$-functor between presentable $\infinity$-categories admits both a left and a right adjoint, again in virtue of the Adjoint Functor Theorem.
\begin{defn}\
	\label{def:truncationindegleqp}
	\begin{enumerate}[label=\arabic{enumi}., ref=\thelemman.\arabic{enumi}]
		\item \label{def:naivetruncationindegleqp}The $\infinity$-functor\begin{align*}
		\sigma_{\leqslant p}\colon \Modmixgr_{\Bbbk}\longrightarrow\varepsilon\operatorname{-}\Mod^{{\operatorname{gr},\leqslant p}}_{\Bbbk}
		\end{align*}\textit{right} adjoint to the inclusion $$\iota_{\leqslant p}\colon \varepsilon\operatorname{-}\Mod^{{\operatorname{gr},\leqslant p}}_{\Bbbk}\subseteq \Modmixgr_{\Bbbk}$$is the \textit{naive truncation in weights $\leqslant p$.}
		\item \label{def:clevertruncationindegleqp}The $\infinity$-functor\begin{align*}
		\theta_{\leqslant p}\colon \Modmixgr_{\Bbbk}\longrightarrow\varepsilon\operatorname{-}\Mod^{{\operatorname{gr},\leqslant p}}_{\Bbbk}
		\end{align*}\textit{left} adjoint to the inclusion $$\iota_{\leqslant p}\colon \varepsilon\operatorname{-}\Mod^{{\operatorname{gr},\leqslant p}}_{\Bbbk}\subseteq \Modmixgr_{\Bbbk}$$is the \textit{clever truncation in weights $\leqslant p$.}
	\end{enumerate}
\end{defn}
The $\infinity$-functor of Definition \ref{def:naivetruncationindegleqp} must be understood as the $\infinity$-functor which sends a mixed graded $\Bbbk$-module $M_{\bullet}$ to the mixed graded $\Bbbk$-module $\sigma_{\leqslant p}M_{\bullet}$ whose $q$ weight component is $M_p$ if $q\leqslant p$, and $0$ otherwise, with obviously induced mixed differential. This is different from the $\infinity$-functor of Definition \ref{def:clevertruncationindegleqp} which in turn is an analogue of the clever truncation of chain complexes in the context of mixed graded $\Bbbk$-modules.
\begin{remark}
	\label{remark:inclusionstruncationsmixedgraded}
	By the adjunction $\iota_{\leqslant p}\dashv\sigma_{\leqslant p}$, for any mixed graded $\Bbbk$-module $M_{\bullet}$ we have a counit morphism$$\epsilon_{M_{\bullet}}\colon \iota_{\leqslant p} \lp\sigma_{\leqslant p} M_{\bullet}\rp\longrightarrow M_{\bullet}$$ which includes its part concentrated in weights $\leqslant p$ in itself.\\For any $q\leqslant p$, we have that $$\sigma_{\leqslant q}\lp \iota_{\leqslant p}\lp\sigma_{\leqslant p}M_{\bullet}\rp\rp\simeq\sigma_{\leqslant q}M_{\bullet}$$so the counit morphisms yield inclusions$$\epsilon_{\iota_{\leqslant p}\lp\theta_{\leqslant p}M_{\bullet}\rp}\colon  \iota_{\leqslant q} \lp\sigma_{\leqslant q}\lp \iota_{\leqslant p}\lp\sigma_{\leqslant p}M_{\bullet}\rp\rp\rp\simeq  \iota_{\leqslant q}\lp\sigma_{\leqslant q} M_{\bullet}\rp\longrightarrow \iota_{\leqslant p}\lp\sigma_{\leqslant p}M_{\bullet}\rp.$$\\
	We shall commit a slight abuse of notation, and identify $\sigma_{\leqslant q}M_{\bullet}$ with its inclusion $\iota_{\leqslant q}\lp\sigma_{\leqslant q}M_{\bullet}\rp$ in $\Modmixgr_{\Bbbk}$; in particular, we shall write the inclusion morphisms above simply as
	\begin{align}
	\sigma_{\leqslant q}M_{\bullet}\longhookrightarrow\sigma_{\leqslant p}M_{\bullet}\longhookrightarrow M_{\bullet}.
	\end{align}
	Using the dual properties of the adjunction $\theta_{\leqslant p}\dashv \iota_{\leqslant p}$, we easily obtain dual properties for the clever truncation \infinity-functor.
\end{remark}
\begin{parag}
	A completely dual discussion can be carried out by replacing $\varepsilon\operatorname{-}\Mod^{{\operatorname{gr},\leqslant p}}_{\Bbbk}$ with $\varepsilon\operatorname{-}\Mod^{{\operatorname{gr},\geqslant p}}_{\Bbbk}$, which is now the full sub-\infinity-category consisting of those mixed graded $\Bbbk$-modules which are trivial in weights $q<p$. Now, the inclusion$$\iota_{\geqslant p}\colon\varepsilon\operatorname{-}\Mod^{{\operatorname{gr},\geqslant p}}_{\Bbbk}\subseteq\Modmixgr_{\Bbbk}$$preserves all limits and colimits and hence admits both left and right adjoints. 
\end{parag}
	\begin{defn}\
		\label{def:truncationindeggeqp}
		\begin{enumerate}[label=\arabic{enumi}., ref=\thelemman.\arabic{enumi}]
			\item \label{def:naivetruncationindeggeqp}The \infinity-functor\begin{align*}
			\sigma_{\geqslant p}\colon \Modmixgr_{\Bbbk}\longrightarrow\varepsilon\operatorname{-}\Mod^{{\operatorname{gr},\geqslant p}}_{\Bbbk}
			\end{align*}\textit{left} adjoint to the inclusion $$\iota_{\geqslant p}\colon\varepsilon\operatorname{-}\Mod^{{\operatorname{gr},\geqslant p}}_{\Bbbk}\subseteq\Modmixgr_{\Bbbk}$$
			is the \textit{naive truncation in weights $\geqslant p$}.
			\item \label{def:clevertruncationindeggeqp}The \infinity-functor\begin{align*}
			\theta_{\geqslant p}\colon \Modmixgr_{\Bbbk}\longrightarrow\varepsilon\operatorname{-}\Mod^{{\operatorname{gr},\geqslant p}}_{\Bbbk}
			\end{align*}\textit{right} adjoint to the inclusion $$\iota_{\geqslant p}\colon\varepsilon\operatorname{-}\Mod^{{\operatorname{gr},\geqslant p}}_{\Bbbk}\subseteq\Modmixgr_{\Bbbk}$$
			is the \textit{naive truncation in weights $\geqslant p$}.
		\end{enumerate}
	\end{defn}
	Again, the notation and the nomenclature suggest the behavior of the \infinity-functors on a mixed graded $\Bbbk$-module: the one in Definition \ref{def:naivetruncationindeggeqp} simply kills $M_q$ for any $q<p$, while the one in Definition \ref{def:clevertruncationindeggeqp} changes the underlying graded $\Bbbk$-module even in weight $q\geqslant p$, since it involves some sort of totalization.
\subsection{The $t$-structure on mixed graded modules}
\label{sec:tstruct}
The main result of this section is the following Theorem, which provides a $t$-structure on mixed graded $\Bbbk$-modules describing explicitly both connective and coconnective objects.
\begin{theorem}
	\label{thm:mixedgradedtstructure}
	Let $\lp\Modmixgr_{\Bbbk}\rp_{\!\geqslant 0}$ be the full $\infinity$-subcategory of $\Modmixgr_{\Bbbk}$ spanned by those mixed graded $k$-modules $M_{\bullet}$ such that, for any integer $q$, the $\Bbbk$-module $M_q$ is $(-q)$-connective. Dually, let $\lp\Modmixgr_{\Bbbk}\rp_{\!\leqslant 0}$ be the full $\infinity$-subcategory spanned by those mixed graded ${\Bbbk}$-modules $M_{\bullet}$ such that, for all $q\in\ZZ$, the $\Bbbk$-module $M_q$ is $(-q)$-coconnective. These sub-$\infinity$-categories determine an accessible $t$-structure on the stable $\infinity$-category of mixed graded ${\Bbbk}$-modules, which we call the \textit{mixed graded Postnikov $t$-structure}.
	When ${\Bbbk}$ is discrete, the heart of such $t$-structure is equivalent to the classical abelian $1$-category of chain complexes of ${\Bbbk}$-modules $\dgMod_{\Bbbk}$.\end{theorem}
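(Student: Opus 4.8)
The plan is to transport to $\Modmixgr_\Bbbk$ the \emph{shifted‑product Postnikov $t$‑structure} on $\Modgr_\Bbbk\simeq\prod_{q\in\ZZ}\Mod_\Bbbk$ — the one whose connective (resp.\ coconnective) part consists of the objects that are weight‑wise $(-q)$‑connective (resp.\ $(-q)$‑coconnective) — along the forgetful \infinity-functor $\oblv_\varepsilon\colon\Modmixgr_\Bbbk\to\Modgr_\Bbbk$. By \cref{lemma:colimitsinmixgr} and \cref{porism:rightadjointforgetful}, $\oblv_\varepsilon$ is conservative, preserves all limits and colimits (being both a left and a right adjoint), and exact, so the suspension is weight‑wise; hence $(\Modmixgr_\Bbbk)_{\geqslant 0}=\oblv_\varepsilon^{-1}\bigl(\prod_q(\Mod_\Bbbk)_{\geqslant -q}\bigr)$ and $(\Modmixgr_\Bbbk)_{\leqslant 0}=\oblv_\varepsilon^{-1}\bigl(\prod_q(\Mod_\Bbbk)_{\leqslant -q}\bigr)$ are closed under colimits, resp.\ limits, and under extensions, and satisfy $(\Modmixgr_\Bbbk)_{\geqslant 0}[1]\subseteq(\Modmixgr_\Bbbk)_{\geqslant 0}$, $(\Modmixgr_\Bbbk)_{\leqslant 0}[-1]\subseteq(\Modmixgr_\Bbbk)_{\leqslant 0}$. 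Both subcategories are presentable, being preimages along the accessible \infinity-functor $\oblv_\varepsilon$ of presentable subcategories of $\Modgr_\Bbbk$, so the inclusions admit a right, resp.\ a left, adjoint $\tau^\varepsilon_{\geqslant 0}$, resp.\ $\tau^\varepsilon_{\leqslant 0}$, by the adjoint functor theorem. What is \emph{not} formal — because $\oblv_\varepsilon$ is not fully faithful — is the orthogonality axiom, and this is where the content of the proof lies.

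For orthogonality, let $X_\bullet\in(\Modmixgr_\Bbbk)_{\geqslant 0}$, $Y_\bullet\in(\Modmixgr_\Bbbk)_{\leqslant -1}$ (so $X_q$ is $(-q)$‑connective and $Y_q$ is $(-q-1)$‑coconnective); I must show the mapping $\Bbbk$‑module $\Map_{\Modmixgr_\Bbbk}(X_\bullet,Y_\bullet)$ is $(-1)$‑coconnective, so that the mapping space is contractible. The key point is that this object is \emph{not} computed weight‑wise: by \cref{parag:monoidalstructureonmodmixgr} and \cref{parag:modeladjunction} it equals the realization $|\,\Mapmixgr_\Bbbk(X_\bullet,Y_\bullet)\,|$, whose weight $p$ component is $\prod_{q}\Map_{\Mod_\Bbbk}(X_q,Y_{q+p})$; since $X_q$ is $(-q)$‑connective and $Y_{q+p}$ is $(-q-p-1)$‑coconnective, each factor — hence the product — is $(-p-1)$‑coconnective, i.e.\ $\Mapmixgr_\Bbbk(X_\bullet,Y_\bullet)\in(\Modmixgr_\Bbbk)_{\leqslant -1}$. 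Now I use the strict model $|L|\simeq\prod_{p\geqslant 0}L_{-p}[-2p]$ of the realization (with total differential): for $L\in(\Modmixgr_\Bbbk)_{\leqslant -1}$ the component $L_{-p}$ is $(p-1)$‑coconnective, so the summand $L_{-p}[-2p]$ is $(-p-1)$‑coconnective, hence $(-1)$‑coconnective for every $p\geqslant 0$ — the shift $[-2p]$ beats the coconnectivity bound $p-1$ — and since $(\Mod_\Bbbk)_{\leqslant -1}$ is closed under limits and extensions, the realization, a limit of iterated extensions of these summands, again lies in $(\Mod_\Bbbk)_{\leqslant -1}$. In particular $\pi_0\Map_{\Modmixgr_\Bbbk}(X_\bullet,Y_\bullet)=0$. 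This interaction between the connectivity bounds and the shifts $[-2p]$ in the realization is, I expect, the delicate point of the whole argument.

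For the fibre‑sequence axiom, given $M_\bullet$ I claim $\oblv_\varepsilon\tau^\varepsilon_{\geqslant 0}M_\bullet$ is the weight‑wise connective cover $\{\tau_{\geqslant -q}M_q\}_q$. Indeed $(\operatorname{L}_\varepsilon N_\bullet)_p\simeq N_p\oplus N_{p+1}[1]$ by \cref{porism:rightadjointforgetful}, so $\operatorname{L}_\varepsilon$ carries $\prod_q(\Mod_\Bbbk)_{\geqslant -q}$ into $(\Modmixgr_\Bbbk)_{\geqslant 0}$; hence, for any weight‑wise $(-\bullet)$‑connective $N_\bullet$, using $\operatorname{L}_\varepsilon\dashv\oblv_\varepsilon$ and $\tau^\varepsilon_{\geqslant 0}\dashv\iota_{\geqslant 0}$,
\begin{align*}
\Map_{\Modgr_\Bbbk}(N_\bullet,\oblv_\varepsilon\tau^\varepsilon_{\geqslant 0}M_\bullet)&\simeq\Map_{\Modmixgr_\Bbbk}(\operatorname{L}_\varepsilon N_\bullet,\tau^\varepsilon_{\geqslant 0}M_\bullet)\simeq\Map_{\Modmixgr_\Bbbk}(\operatorname{L}_\varepsilon N_\bullet,M_\bullet)\\
&\simeq\Map_{\Modgr_\Bbbk}(N_\bullet,\oblv_\varepsilon M_\bullet),
\end{align*}
the composite being induced by $\oblv_\varepsilon$ of the counit $\tau^\varepsilon_{\geqslant 0}M_\bullet\to M_\bullet$; so that counit is the connective‑cover map in $\Modgr_\Bbbk\simeq\prod_q\Mod_\Bbbk$. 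Consequently $\tau^\varepsilon_{\leqslant -1}M_\bullet\coloneqq\cofib(\tau^\varepsilon_{\geqslant 0}M_\bullet\to M_\bullet)$ has underlying graded module $\{\tau_{\leqslant -q-1}M_q\}_q$, hence lies in $(\Modmixgr_\Bbbk)_{\leqslant -1}$, yielding the fibre sequence $\tau^\varepsilon_{\geqslant 0}M_\bullet\to M_\bullet\to\tau^\varepsilon_{\leqslant -1}M_\bullet$. Together with the previous paragraph this shows $\bigl((\Modmixgr_\Bbbk)_{\geqslant 0},(\Modmixgr_\Bbbk)_{\leqslant 0}\bigr)$ is a $t$‑structure; it is accessible because $\Modmixgr_\Bbbk$ and $(\Modmixgr_\Bbbk)_{\geqslant 0}$ are presentable (\cite[Definition 1.4.4.12]{ha}).

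Finally, for the heart with $\Bbbk$ discrete: $(\Modmixgr_\Bbbk)^\heartsuit=(\Modmixgr_\Bbbk)_{\geqslant 0}\cap(\Modmixgr_\Bbbk)_{\leqslant 0}$ consists of the $M_\bullet$ with each $M_q$ concentrated in homological degree $-q$, say $M_q\simeq V_q[-q]$ with $V_q\coloneqq\pi_{-q}M_q$ an ordinary $\Bbbk$‑module, and under these identifications the mixed differential $\varepsilon_q\colon M_q\to M_{q-1}[-1]$ becomes a $\Bbbk$‑linear map $d_q\colon V_q\to V_{q-1}$ with $d_{q-1}\circ d_q=0$. To see that these data determine the object (no higher datum can survive in a $1$‑category), I would argue that $\oblv_\varepsilon$ is $t$‑exact by construction and — being conservative, limit‑preserving, and a left adjoint — comonadic, with comonad $\oblv_\varepsilon\operatorname{R}_\varepsilon$ which is again $t$‑exact and restricts on $(\Modgr_\Bbbk)^\heartsuit$, the category of $\ZZ$‑graded $\Bbbk$‑modules, to the endofunctor $M_\bullet\mapsto M_\bullet\oplus M_{\bullet-1}$; comodules over this comonad are exactly chain complexes. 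Hence $(\Modmixgr_\Bbbk)^\heartsuit\simeq\dgMod_\Bbbk$ as abelian categories.
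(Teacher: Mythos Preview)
Your proof is correct, and its overall logic differs from the paper's in an interesting way. The paper invokes \cite[Proposition 1.4.4.11]{ha}: it shows $(\Modmixgr_\Bbbk)_{\geqslant 0}$ is presentable and closed under colimits and extensions, which yields \emph{some} accessible $t$-structure with that connective part; the bulk of the paper's argument then goes into identifying the coconnective part, via two lemmas --- a sufficiency direction (essentially your orthogonality computation) and a separate necessity direction that argues by contradiction using the test objects $\Bbbk(0)$ and $\Bbbk[1](-1)$. You instead verify the three axioms of a $t$-structure directly. Your orthogonality paragraph plays the same role as the paper's sufficiency lemma, though the paper short-circuits the realization step by using the fiber description of $\Map_{\Modmixgr_\Bbbk}$ from \cref{parag:monoidalstructureonmodmixgr} rather than the full product model. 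The genuine economy is in your fiber-sequence paragraph: the adjunction trick with $\operatorname{L}_\varepsilon\dashv\oblv_\varepsilon$ (using that $\operatorname{L}_\varepsilon$ preserves the connective part) pins down $\oblv_\varepsilon\tau^\varepsilon_{\geqslant 0}$ directly and makes the paper's necessity lemma superfluous --- in fact the paper recovers the same description of $\tau^\varepsilon_{\geqslant 0}$ only afterwards, as \cref{cor:truncationisgood}.

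For the heart, the paper takes a different route entirely: it postpones the identification with $\dgMod_\Bbbk$ and deduces it from the $t$-exact equivalence with complete filtered modules (\cref{thm:leftcompletion}) together with Beilinson's identification of the heart on the filtered side. Your comonadicity sketch is more self-contained; to make it watertight you would want to check that $\oblv_\varepsilon$ satisfies the hypotheses of the comonadic Barr--Beck theorem and that the induced comonad on $(\Modgr_\Bbbk)^\heartsuit$ really is the one whose comodules are chain complexes (the coassociativity encoding $d^2=0$), but these are routine.
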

\begin{remark}
	The name for such $t$-structure is borrowed from \cite{tstructurektheory}, where the standard $t$-structure on modules over a connective $\mathbb{E}_1$-ring of \cite[Proposition $7.1.1.13$]{ha} is referred to as the \textit{Postnikov $t$-structure}.
\end{remark}
\begin{proof}[Proof of \cref{thm:mixedgradedtstructure}]
	Consider the sub-\infinity-category $\lp\Modmixgr_{\Bbbk}\rp_{\!\geqslant0}$ described in the statement of \cref{thm:mixedgradedtstructure}. It is a sub-\infinity-category of $\Modmixgr_{\Bbbk}$ which is closed under all colimits and extensions, since they are computed weight-wise (\cref{lemma:colimitsinmixgr}) and so it suffices to prove the claim in $\Mod_{\Bbbk}$, where $q$-connective objects are stable under colimits and extensions for any $q$ (this is \cite[Proposition $7.1.1.13$]{ha}). Moreover it is presentable: indeed, it fits in a $(\infinity,2)$-pullback diagram of presentable \infinity-categories$$\begin{tikzpicture}[scale=0.75]
	\node (a) at (-4,3){$\lp\Modmixgr_{\Bbbk}\rp_{\!\geqslant0}$};
	\node (b) at (1,3){$\Modmixgr_{\Bbbk}$};
	\node (c) at (-4,0){$\lp\Modgr_{\Bbbk}\rp_{\!\geqslant0}$};
	\node (d)at (1,0){$\Modgr_{\Bbbk}$};
	\node at (-3.4,2.2){$\lrcorner$};
	\draw[->, font=\scriptsize] (a) to node[left]{} (b);
	\draw[->, font=\scriptsize] (a) to node[left]{} (c);
	\draw[->, font=\scriptsize] (b) to node[right]{$\oblv_{\varepsilon}$} (d);
	\draw[right hook->, font=\scriptsize] (c) to node[right]{} (d);
	\end{tikzpicture}$$
	where $$\lp\Modgr_{\Bbbk}\rp_{\!\geqslant0}\coloneqq\prod_{q\in\ZZ}\lp\Mod_{\Bbbk}\rp_{\geqslant-q}.$$Thus, the sub-\infinity-category $\lp\Modmixgr_{\Bbbk}\rp_{\geqslant0}$ defines a $t$-structure such that $\lp\Modmixgr_{\Bbbk}\rp_{\geqslant0}$ is precisely the \infinity-category of connective objects, thanks to \cite[Proposition $1.4.4.11$]{ha}. In detail: under these hypotheses, the \infinity-category $\lp\Modmixgr_{\Bbbk}\rp_{\!\geqslant0}$ is a colocalization of $\Modmixgr_{\Bbbk}$, hence the Adjoint Functor Theorem provides an \infinity-functor
	\begin{align}
	\label{functor:connectivecover}
	\tau^{\mixgr}_{\geqslant 0}\colon\Modmixgr_{\Bbbk}\longrightarrow\lp\Modmixgr_{\Bbbk}\rp_{\geqslant0}
	\end{align}right adjoint to the obvious inclusion. Thus, the $(-1)$-coconnective objects are defined to be those mixed graded $\Bbbk$-modules $N_{\bullet}$ such that $\tau^{\mixgr}_{\geqslant 0}N_{\bullet}\simeq 0$ in $\lp\Modmixgr_{\Bbbk}\rp_{\!\geqslant 0}$, and so each mixed graded $\Bbbk$-module is sandwiched in a (\infinity-functorial) cofiber sequence
	\begin{align}
	\label{sequence:fiberseqmixedgraded}
	\tau^{\mixgr}_{\geqslant 0}M_{\bullet}\longrightarrow M_{\bullet}\longrightarrow \tau^{\mixgr}_{\leqslant -1}M_{\bullet}
	\end{align}
	where $\tau^{\mixgr}_{\geqslant 0}M_{\bullet}\to M_{\bullet}$ is the adjoint to the identity of $\tau^{\mixgr}_{\geqslant 0}M_{\bullet}$. The interesting part of the statement of the Theorem is the one that characterizes the coconnective objects: we have to prove that a mixed graded $\Bbbk$-module is coconnective $N_{\bullet}$ \textit{precisely} if $N_q$ is $(-q)$-coconnective for any integer $q$. \cite[Remark $1.2.1.3$]{ha} assures us that $(-1)$-coconnective objects are uniquely determined by the property of being right orthogonal to any connective object. Since the enrichment in spaces of a $\Bbbk$-linear \infinity-category $\scrC$ is given by the truncation in degrees $\geqslant 0$ of the mapping $\Bbbk$-module, this means that the $\Bbbk$-module $\Map_{\Modmixgr_{\Bbbk}}{\lp M_{\bullet},\hsp N_{\bullet}\rp}$ has to be $(-1)$-coconnective for any $M_{\bullet}$ connective. Since in $\Modmixgr_{\Bbbk}$ colimits are computed weight-wise, and by definition$$\lp\Modmixgr_{\Bbbk}\rp_{\leqslant-1}\coloneqq\lp\Modmixgr_{\Bbbk}\rp_{\leqslant0}[-1],$$our claim is equivalent to proving that $$\tau_{\geqslant 0}\Map_{\Modmixgr_{\Bbbk}}{\lp M_{\bullet},\hsp N_{\bullet}\rp}\simeq \{*\}$$ for $N_{\bullet}$ such that $N_q$ is $(-q-1) $-coconnective for any integer $q$. We shall need the following two Lemmas.
	\begin{lemman}
		\label{lemma:sufficiency}
		Let $N_{\bullet}$ such that $N_q$ is $(-q-1)$-coconnective for any integer $q$. Then, $N_{\bullet}$ is a $(-1)$-coconnective object for the mixed graded Postnikov $t$-structure. 
	\end{lemman}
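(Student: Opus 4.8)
The plan is to follow the reduction already carried out in the proof of \cref{thm:mixedgradedtstructure}: by \cite[Remark $1.2.1.3$]{ha}, the object $N_{\bullet}$ is $(-1)$-coconnective for the $t$-structure generated by $\lp\Modmixgr_{\Bbbk}\rp_{\!\geqslant 0}$ exactly when it is right orthogonal to every connective object, and $\Bbbk$-linearity turns this into the requirement that the mapping $\Bbbk$-module $\Map_{\Modmixgr_{\Bbbk}}{\lp M_{\bullet},\hsp N_{\bullet}\rp}$ be $(-1)$-coconnective for every $M_{\bullet}$ with $M_p$ being $(-p)$-connective for all $p\in\ZZ$. So I would fix such an $M_{\bullet}$ and estimate the coconnectivity of that mapping $\Bbbk$-module.

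The main input is the explicit description of the enrichment from \ref{parag:monoidalstructureonmodmixgr}: $\Map_{\Modmixgr_{\Bbbk}}{\lp M_{\bullet},\hsp N_{\bullet}\rp}$ is the fiber of the map $\varepsilon_0\colon \Mapmixgr_{\Bbbk}{\lp M_{\bullet},\hsp N_{\bullet}\rp}_0 \to \Mapmixgr_{\Bbbk}{\lp M_{\bullet},\hsp N_{\bullet}\rp}_{-1}[-1]$, together with $\Mapmixgr_{\Bbbk}{\lp M_{\bullet},\hsp N_{\bullet}\rp}_p \simeq \prod_{q\in\ZZ}\Map_{\Mod_{\Bbbk}}{\lp M_q,\hsp N_{q+p}\rp}$. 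I would combine this with the standard fact that, in $\Mod_{\Bbbk}$, the mapping module out of an $a$-connective object into a $b$-coconnective one is $(b-a)$-coconnective (indeed, resolving the source by projectives in homological degrees $\geqslant a$ and the target by a complex in homological degrees $\leqslant b$, the internal Hom complex sits in homological degrees $\leqslant b-a$), together with the stability of coconnective $\Bbbk$-modules under products. Feeding in the hypothesis that $N_q$ is $(-q-1)$-coconnective for every $q$: each weight $0$ factor $\Map_{\Mod_{\Bbbk}}{\lp M_q,\hsp N_q\rp}$ comes out $(-1)$-coconnective, so $\Mapmixgr_{\Bbbk}{\lp M_{\bullet},\hsp N_{\bullet}\rp}_0$ is $(-1)$-coconnective; each weight $-1$ factor $\Map_{\Mod_{\Bbbk}}{\lp M_q,\hsp N_{q-1}\rp}$ comes out $0$-coconnective, so $\Mapmixgr_{\Bbbk}{\lp M_{\bullet},\hsp N_{\bullet}\rp}_{-1}$ is $0$-coconnective and its shift by $[-1]$ is $(-1)$-coconnective.

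I would then conclude from the long exact sequence of the fiber sequence: $\Map_{\Modmixgr_{\Bbbk}}{\lp M_{\bullet},\hsp N_{\bullet}\rp}$ is the fiber of a map between two $(-1)$-coconnective $\Bbbk$-modules, and for $i\geqslant 0$ both $\pi_{i+1}$ of the target and $\pi_i$ of the source vanish, whence $\pi_i\Map_{\Modmixgr_{\Bbbk}}{\lp M_{\bullet},\hsp N_{\bullet}\rp}=0$; thus this mapping $\Bbbk$-module is $(-1)$-coconnective, as required. The argument is essentially bookkeeping of connectivity indices; the only point demanding slight care is that the weight $-1$ summand is merely $(-1)$-coconnective after the shift, rather than more coconnective, but this is still enough because passing to the fiber shifts homotopy groups by one. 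I do not foresee a genuine obstacle beyond keeping the bounds straight.
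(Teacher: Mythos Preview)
Your proposal is correct and follows essentially the same approach as the paper: both reduce to showing the mapping $\Bbbk$-module is $(-1)$-coconnective via the fiber sequence from \ref{parag:monoidalstructureonmodmixgr}, then bound the coconnectivity of the weight $0$ and weight $-1$ pieces of the internal Hom termwise. The only cosmetic difference is that the paper invokes closure of $(-1)$-coconnective objects under limits to conclude, whereas you spell out the long exact sequence; your bound on the shifted weight $-1$ term ($(-1)$-coconnective) is in fact sharper than the paper's stated $(-2)$, but either suffices.
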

	\begin{proof}
		We recall that the enrichment in $\Bbbk$-modules of mixed graded $\Bbbk$-modules is given by the fiber for the mixed structure morphism$$	{\Mapmixgr_{{\Bbbk}}}{\lp M_{\bullet},\hsp N_{\bullet}\rp}_0\overset{\varepsilon_0}{\longrightarrow}	{\Mapmixgr_{{\Bbbk}}}{\lp M_{\bullet},\hsp N_{\bullet}\rp}_{-1}[-1].$$The enrichment in spaces is now given by applying the truncation \infinity-functor $\tau_{\geqslant 0}$ to such $\Bbbk$-module (\cite[Construction $1.3.1.13$]{ha}). So, let $M_{\bullet}$ be such that $M_q$ is $(-q)$-connective for all $q$'s: we have a fiber sequence$$\Map_{\Modmixgr}{\lp M_{\bullet},\hsp N_{\bullet}\rp}\longrightarrow{\Mapmixgr_{{\Bbbk}}}{\lp M_{\bullet},\hsp N_{\bullet}\rp}_0\overset{\varepsilon_0}{\longrightarrow}	{\Mapmixgr_{{\Bbbk}}}{\lp M_{\bullet},\hsp N_{\bullet}\rp}_{-1}[-1].$$Writing explicitly the second and third terms of such fiber sequence, using \cref{parag:monoidalstructureonmodmixgr}, we get$$\Map_{\Modmixgr}{\lp M_{\bullet},\hsp N_{\bullet}\rp}\longrightarrow\prod_{q\in\ZZ}{\Map_{\Mod_{\Bbbk}}}{\lp M_q,\hsp N_q\rp}\overset{\varepsilon}{\longrightarrow} \prod_{p\in\ZZ}{\Map_{\Mod_{\Bbbk}}}{\lp M_p,\hsp N_{p-1}\rp}[-1].$$
		The second term of such fiber sequence is (at least) $(-1)$-coconnective, because it is the product of $(-1)$-coconnective $\Bbbk$-modules (being $M_q$ $(-q)$-connective and $N_q$ $(-q-1)$-coconnective for all $q$'s). Analogously, also the third term is (at least) $(-2)$-coconnective, hence in particular $(-1)$-coconnective. Being $(-1)$-coconnective objects stable under all limits, we get that $\Map_{\Modmixgr_{\Bbbk}}{\lp M_{\bullet},\hsp N_{\bullet}\rp}$ is now $(-1)$-coconnective, hence its truncation yields a contractible space.
	\end{proof}
	\begin{lemman}
		\label{lemma:necessity}
		Suppose that $N_{\bullet}$ is $(-1)$-coconnective for the mixed graded Postnikov $t$-structure. Then $N_q$ is $(-q-1)$-coconnective as a $\Bbbk$-module.
	\end{lemman}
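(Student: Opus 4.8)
The plan is to invert the reasoning of \cref{lemma:sufficiency} by probing the weight $q$ component of $N_{\bullet}$ through the adjunction $\operatorname{Free}_{\varepsilon}\lpd q\rpd\dashv(-)_q$ of \cref{parag:freeleftadjointmixedgraded}. Recall from \cref{parag:freeleftadjointmixedgraded} (equivalently, from \cref{porism:rightadjointforgetful}) that for a $\Bbbk$-module $P$ the mixed graded $\Bbbk$-module $\operatorname{Free}_{\varepsilon}(P)\lpd q\rpd$ has $P$ in weight $q$, $P[1]$ in weight $q-1$, and vanishes in every other weight, with mixed structure induced by the canonical equivalence $P\simeq P[1][-1]$.

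First I would show that $\operatorname{Free}_{\varepsilon}(P)\lpd q\rpd$ is connective for the mixed graded Postnikov $t$-structure whenever $P$ is $(-q)$-connective: by \cref{thm:mixedgradedtstructure} connectivity is detected weight-wise, so it suffices that the weight $q$ component $P$ be $(-q)$-connective -- which is the hypothesis -- and that the weight $q-1$ component $P[1]$ be $(-(q-1))$-connective, which amounts again to $P$ being $(-q)$-connective. Granting this, suppose $N_{\bullet}$ is $(-1)$-coconnective. Then for every $(-q)$-connective $\Bbbk$-module $P$ the mapping $\Bbbk$-module $\Map_{\Modmixgr_{\Bbbk}}{\lp\operatorname{Free}_{\varepsilon}(P)\lpd q\rpd,\hsp N_{\bullet}\rp}$ is $(-1)$-coconnective, and since the adjunction $\operatorname{Free}_{\varepsilon}\lpd q\rpd\dashv(-)_q$ is $\Bbbk$-linear this is canonically the $\Bbbk$-module $\Map_{\Mod_{\Bbbk}}{\lp P,\hsp N_q\rp}$. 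Choosing the sharpest test object $P\coloneqq\Bbbk[-q]$ gives $\Map_{\Mod_{\Bbbk}}{\lp\Bbbk[-q],\hsp N_q\rp}\simeq N_q[q]$, so $N_q[q]$ is $(-1)$-coconnective; desuspending by $q$, this says precisely that $N_q$ is $(-q-1)$-coconnective, as desired.

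The only delicate point -- and the one I would make explicit -- is the identification of \enquote{$N_{\bullet}$ is right-orthogonal to connective objects} with \enquote{the mapping $\Bbbk$-module out of any connective object is $(-1)$-coconnective}: this rests on the same remark already used in \cref{lemma:sufficiency}, namely that in a $\Bbbk$-linear stable \infinity-category the mapping space is the underlying space of the connective truncation of the mapping $\Bbbk$-module, so a contractible mapping space is equivalent to a $(-1)$-coconnective mapping $\Bbbk$-module. Everything else reduces to the explicit weight-wise formula of \cref{parag:monoidalstructureonmodmixgr} together with the description of $\operatorname{Free}_{\varepsilon}\lpd q\rpd$, so once the correct probe $\Bbbk[-q]$ is chosen no genuine computation remains; abstractly, the whole step is an instance of the principle that the right adjoint of a connective-preserving \infinity-functor preserves coconnective objects, applied to $\operatorname{Free}_{\varepsilon}\lpd q\rpd$ precomposed with the shift $(-)[-q]$.
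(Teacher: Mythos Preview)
Your argument is correct and, in fact, considerably cleaner than the paper's. The paper argues by contrapositive: after a weight shift it assumes $\pi_nN_0\neq 0$ for some $n\geqslant 0$, probes first with $\Bbbk(0)$ via the realization $\left|N_{\bullet}\right|$, and then --- if that realization happens to be $(-1)$-coconnective --- passes to a second test object $\Bbbk[1](-1)$ together with the naive truncation $\sigma_{\leqslant -1}$ to extract $N_0$ as a mapping $\Bbbk$-module. Your route bypasses this two-step case analysis entirely: the single family of probes $\operatorname{Free}_{\varepsilon}(\Bbbk[-q])\lpd q\rpd$ is connective by direct weight inspection, and the adjunction $\operatorname{Free}_{\varepsilon}\lpd q\rpd\dashv(-)_q$ hands you $N_q[q]$ in one line. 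What you gain is transparency --- the argument is visibly an instance of the general principle that a right adjoint to a right $t$-exact functor is left $t$-exact --- and uniformity in $q$, with no reduction to a distinguished weight. What the paper's approach buys, if anything, is that it exercises the realization functor and the naive truncation machinery developed earlier in the section; but as a proof of this lemma alone, yours is the more economical.
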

	\begin{proof}
		Without loss of generality, by shifting weights, we can reduce ourselves to the case that $N_{\bullet}$ is such that, for some integer $n\geqslant0$, there exists a non-trivial homotopy group $\pi_n N_0\not\cong 0$. Let $\Bbbk(0)$ be the mixed graded $\Bbbk$-module with trivial mixed structure in pure weight $0$. Then by the adjunction \ref{adjunction:realizationadjunction}, we have equivalences of $\Bbbk$-modules$$\Map_{\Modmixgr_{\Bbbk}}{\lp \Bbbk(0),\hsp N_{\bullet}\rp}\simeq\Map_{\Mod_{\Bbbk}}{\lp \Bbbk,\hsp \left|N_{\bullet}\right|\rp}\simeq\left|N_{\bullet}\right|.$$If the realization of $N_{\bullet}$ is not $(-1)$-truncated, then $N_{\bullet}$ cannot be $(-1)$-coconnective for the mixed graded Postnikov $t$-structure, since $\Bbbk(0)$ is obviously connective. So, let us assume that $\left|N_{\bullet}\right|$ is $(-1)$-coconnective as a $\Bbbk$-module. Using the naive truncation \infinity-functor in degree $\leqslant-1$ of \ref{def:naivetruncationindegleqp}, by adjunction we have a cofiber sequence$$\sigma_{\leqslant-1}N_{\bullet}\longrightarrow N_{\bullet} \longrightarrow \cofib.$$Applying the realization $\infinity$-functor, that being a right adjoint between stable \infinity-categories preserves cofiber sequences, we have another cofiber sequence of the form$$\left|\sigma_{\leqslant-1}N_{\bullet}\right|\longrightarrow\left|N_{\bullet}\right|\longrightarrow\left|\cofib\right|.$$
		Since limits and colimits are computed weight-wise in $\Modmixgr_{\Bbbk}$, by the description of the realization \infinity-functor provided in \ref{parag:modeladjunction}, we easily have that $\left|\cofib\right|\simeq N_0$, and that moreover $\left|\sigma_{\leqslant-1}N_{\bullet}\right|\simeq N_0[-1].$ In particular, $\left|\sigma_{\leqslant-1}N_{\bullet}\right|$ cannot be trivial, because this would contradict that $\pi_nN_0\not\cong 0$ for some $n\geqslant0.$ Therefore, considering the $\Bbbk$-module $\Bbbk[1](-1)$, we have\begin{align*}
		\Map_{\Modmixgr_{\Bbbk}}{\lp \Bbbk[1](-1),\hsp N_{\bullet}\rp}&\simeq\Map_{\Modmixgr_{\Bbbk}}{\lp \Bbbk(0),\hsp \sigma_{\leqslant-1}N_{\bullet}[-1]\lpd 1\rpd\rp}\\
		&\simeq\Map_{\Mod_{\Bbbk}}{\lp \Bbbk,\hsp \left|\sigma_{\leqslant-1}N_{\bullet}[-1]\lpd 1\rpd\right|\rp}\\
		&\simeq \left|\sigma_{\leqslant-1}N_{\bullet}\lpd 1\rpd[-1]\right|\simeq N_0.
		\end{align*}
		Now, $\Bbbk[1](-1)$ is again connective for the mixed graded Postnikov $t$-structure, and now we have a non-trivial map towards $N_{\bullet}$, which is given by any non-trivial homology class in $\pi_nN_0.$	
	\end{proof}
	\cref{lemma:sufficiency} and \cref{lemma:necessity} together imply that the class of $(-1)$-coconnective objects for the mixed graded Postnikov $t$-structure and the class of mixed graded $\Bbbk$-modules which are $(-q-1)$-coconnective in each weight $q$ coincide, and this concludes the proof of the main statement. 	The claim about the heart of such $t$-structure can be proved directly. The idea is that a mixed graded $\Bbbk$-module lying in the heart of the $t$-structure is forced to be a mixed graded $\Bbbk$-module whose weight $p$ component is a discrete $\Bbbk$-module concentrated in homological degree $-p$, and the differential for this chain complex is given by the mixed differential. Yet, we shall see this claim as a corollary of \cref{thm:leftcompletion}.
\end{proof}
\begin{porism}
	The proof of \cref{thm:mixedgradedtstructure} \textit{does not rely} on the assumption that $\Bbbk$ is a field. We can always define the \infinity-category of mixed graded $R$-modules, for $R$ a $\QQ$-algebra in $\mathbb{E}_{\infty}$-rings, as the \infinity-category of quasi-coherent sheaves on the affine group stack $\mathsf{B}\mathbb{G}_{\operatorname{a},R}\rtimes\mathbb{G}_{\operatorname{m},R}$. Let $\Sp^{\gr}\coloneqq\prod_{p\in\ZZ}\Sp$ be the \infinity-category of graded spectra, and let $\Sp^{\gr}_{\geqslant0}$ be its full sub-\infinity-category spanned by graded spectra which are $(-q)$-connective for all integers $q$. In virtue of \cite[Theorem $4.1$]{geometryoffiltrations}, we have an equivalence of \infinity-categories$$\Sp^{\gr}\simeq\Qcoh{\lp\mathsf{B}\mathbb{G}_{\operatorname{m},\mathbb{S}}\rp},$$where $\mathbb{G}_{\operatorname{m},\mathbb{S}}\coloneqq\Spec(\mathbb{S}[\ZZ])$ is the flat multiplicative group (spectral) scheme. Hence, by the abstract nonsense recollected at the beginning of the proof of \cref{thm:mixedgradedtstructure}, the \infinity-category$$\lp\Modmixgr_{R}\rp_{\geqslant 0}\coloneqq \Modmixgr_{R}\times_{\Sp^{\gr}}\Sp^{\gr}_{\geqslant0}$$is again presentable, closed under all colimits and extensions, and so it determines the connective part for an accessible $t$-structure over $\Modmixgr_{R}$. Even for an arbitrary derived scheme $X$ defined over a derived $\QQ$-algebra $\Bbbk$, one can define (\cite[Section $1.2.1$]{derivedfoliations2}) the \textit{\infinity-category of mixed graded modules over $X$} as the \infinity-category of sheaves with values in $\Modmixgr_{\Bbbk}$ for the Zariski topology on $X$:
	$$\Modmixgr_X\coloneqq\operatorname{Shv}_{\operatorname{Zar}}{\lp X,\hsp \Modmixgr_{\Bbbk}\rp}.$$
	Setting $\lp\Modmixgr_X\rp_{\geqslant0}$ to be the $\infinity$-category of those sheaves of mixed graded $\Bbbk$-modules over $X$ which are connective as mixed graded $S$-modules for any $S$-point $\Spec(S)\to X$ (the fact that this yields, indeed, a $t$-structure is essentially \cite[Chapter $3$, Section $1.5.1$]{studyindag1}). The only property of a field of characteristic $0$ that we use in the proof of \cref{thm:mixedgradedtstructure} is the fact that it is connective: indeed, \textit{whenever $R$ is connective}, then the coconnective part for such $t$-structure can be characterized as$$\lp\Modmixgr_R\rp_{\leqslant 0}\coloneqq\Modmixgr_R\times_{\Sp^{\gr}}\Sp^{\gr}_{\leqslant0}.$$This fact is not true in general if $R$ is not connective (see also \cite[Warning $3.5.9$]{dagx}). The analogous statement on the coconnective part for the $t$-structure on mixed graded modules over a non-affine derived scheme $X$ case can be recovered under the hypothesis that the sheaf of $\Einf$-rings is connective (in the sense of \cite[Definition $1.20$]{dagvii}), using \cite[Proposition $2.1.3$]{dagviii}, or in the non-connective case when $X$ is at least an Artin stack (here, we use \cite[Chapter $3$, Proposition $1.5.4$]{studyindag1}). 
\end{porism}
The characterization of the connective and coconnective objects of the $t$-structure of \cref{thm:mixedgradedtstructure} allows us to describe pretty neatly how the connective cover and the coconnective cover \infinity-functors behave on a mixed graded $\Bbbk$-module.
\begin{corollaryn}
	\label{cor:truncationisgood}
	Let $\tau_{\geqslant 0}^{\mixgr}$ be the connective cover \infinity-functor described in \ref{functor:connectivecover}, and let $\tau_{\leqslant0}^{\mixgr}$ be the cofiber \infinity-functor described in \ref{sequence:fiberseqmixedgraded}. For any mixed graded $\Bbbk$-module $M_{\bullet}$, there exist natural equivalences of $\Bbbk$-modules $$\lp\tau^{\mixgr}_{\geqslant 0}M_{\bullet}\rp_q\simeq \tau_{\geqslant -q}M_q$$and$$\lp\tau_{\leqslant0}^{\mixgr}M_{\bullet}\rp_q\simeq \tau_{\leqslant-q}M_q.$$
\end{corollaryn}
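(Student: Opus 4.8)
The plan is to prove both formulas at once by a weight-by-weight argument, extracting them from the functorial truncation triangles of the mixed graded Postnikov $t$-structure. The first observation is that the $q$ weight part \infinity-functor $(-)_q\colon\Modmixgr_{\Bbbk}\to\Mod_{\Bbbk}$ preserves all limits and colimits — this is \cref{lemma:colimitsinmixgr} composed with the projection $\Modgr_{\Bbbk}\simeq\prod_p\Mod_{\Bbbk}\to\Mod_{\Bbbk}$ — and is therefore exact: it carries cofiber sequences of mixed graded $\Bbbk$-modules to cofiber sequences of $\Bbbk$-modules, commutes with shifts, and satisfies $(M_{\bullet}[m])_q\simeq M_q[m]$ for every integer $m$.

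Next, for each integer $n$ the $t$-structure of \cref{thm:mixedgradedtstructure} supplies a functorial cofiber sequence $\tau^{\mixgr}_{\geqslant n}M_{\bullet}\to M_{\bullet}\to\tau^{\mixgr}_{\leqslant n-1}M_{\bullet}$ (for $n=0$ this is precisely \ref{sequence:fiberseqmixedgraded}). Applying $(-)_q$ produces a cofiber sequence of $\Bbbk$-modules $\lp\tau^{\mixgr}_{\geqslant n}M_{\bullet}\rp_q\to M_q\to\lp\tau^{\mixgr}_{\leqslant n-1}M_{\bullet}\rp_q$. By the explicit description of connective and coconnective objects in \cref{thm:mixedgradedtstructure} — for the coconnective side this is the characterization obtained in \cref{lemma:sufficiency} and \cref{lemma:necessity} — the first term is $(n-q)$-connective and the last term is $(n-1-q)$-coconnective. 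Now I would invoke the uniqueness of the truncation triangle for the standard Postnikov $t$-structure on $\Mod_{\Bbbk}$: any cofiber sequence $A\to X\to B$ with $A$ being $m$-connective and $B$ being $(m-1)$-coconnective is canonically identified with $\tau_{\geqslant m}X\to X\to\tau_{\leqslant m-1}X$. With $m=n-q$ this gives natural equivalences $\lp\tau^{\mixgr}_{\geqslant n}M_{\bullet}\rp_q\simeq\tau_{\geqslant n-q}M_q$ and $\lp\tau^{\mixgr}_{\leqslant n-1}M_{\bullet}\rp_q\simeq\tau_{\leqslant n-1-q}M_q$ for all $n$.

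Specializing to $n=0$ yields the first asserted formula $\lp\tau^{\mixgr}_{\geqslant 0}M_{\bullet}\rp_q\simeq\tau_{\geqslant -q}M_q$, and specializing to $n=1$ yields $\lp\tau^{\mixgr}_{\leqslant 0}M_{\bullet}\rp_q\simeq\tau_{\leqslant -q}M_q$, which is the second (here $\tau^{\mixgr}_{\leqslant 0}$ denotes the truncation functor of the $t$-structure at level $0$, fitting in the triangle $\tau^{\mixgr}_{\geqslant 1}M_{\bullet}\to M_{\bullet}\to\tau^{\mixgr}_{\leqslant 0}M_{\bullet}$; it is related to the functor $\tau^{\mixgr}_{\leqslant -1}$ literally occurring in \ref{sequence:fiberseqmixedgraded} by the shift identity $\tau^{\mixgr}_{\leqslant 0}\simeq\lp\tau^{\mixgr}_{\leqslant -1}\lp(-)[-1]\rp\rp[1]$, so its formula can equivalently be read off from the case $n=0$ using $(M_{\bullet}[-1])_q\simeq M_q[-1]$).

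I do not expect a serious obstacle here: the corollary is a formal consequence of \cref{thm:mixedgradedtstructure} and \cref{lemma:colimitsinmixgr}. The only point deserving a little care is the interplay of the two gradings — internal homological degree and weight — since the connectivity condition ``$M_q$ is $(-q)$-connective'' already couples them; what makes the argument close cleanly is that the connectivity and coconnectivity bounds coming from \cref{thm:mixedgradedtstructure} are sharp enough to pin down the weight $q$ component of each mixed graded truncation to the correctly shifted $\Bbbk$-module truncation of $M_q$, via the uniqueness of truncation triangles in $\Mod_{\Bbbk}$.
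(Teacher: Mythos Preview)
Your proof is correct and follows essentially the same approach as the paper: apply the exact weight-$q$ functor to the truncation cofiber sequence, observe the connectivity and coconnectivity of the outer terms via \cref{thm:mixedgradedtstructure}, and invoke the uniqueness of the Postnikov truncation triangle in $\Mod_{\Bbbk}$. The paper does this directly at $n=0$ rather than for general $n$, but the argument is the same.
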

\begin{proof}
	Since colimits are computed weight-wise, taking the $\Bbbk$-module in weight $q$ is an exact $\infinity$-functor from mixed graded $\Bbbk$-modules to $\Bbbk$-modules for any integer $q$. So, considering the canonical fiber sequence$$\tau_{\geqslant 0}^{\mixgr}M_{\bullet}\longrightarrow M_{\bullet} \longrightarrow\tau_{\leqslant-1}^{\mixgr} M_{\bullet}$$and taking the component in weight $q$ we get another fiber sequence$$\lp\tau_{\geqslant 0}^{\mixgr}M_{\bullet}\rp_q\longrightarrow M_q\longrightarrow\lp\tau_{\leqslant-1}^{\mixgr}M_{\bullet}\rp_q.$$By suitably shifting, it follows that this fiber sequence is the essentially unique fiber sequence of $\Bbbk$-modules that extends $M_q$ with a $(-q)$-connective part on the left and a $(-q-1)$-coconnective part on the right, hence $\lp\tau_{\geqslant 0}^{\mixgr}M_{\bullet}\rp_q\simeq \tau_{\geqslant -q}M_q$ and $\lp\tau_{\leqslant -1}^{\mixgr}M_{\bullet}\rp_q\simeq \tau_{\leqslant-q-1}M_q.$
\end{proof}
\section{Relationship with filtered modules}
\label{sec:filtered/mixedgraded}
In this section, we shall state the main link between the $\infinity$-categories of filtered $\Bbbk$-modules and mixed graded $\Bbbk$-modules, and their respective $t$-structures described in Theorems \ref{thm:beilinsontstructure} and \ref{thm:mixedgradedtstructure}. The main result of this section is \cref{thm:leftcompletion}, which states that complete (in the homotopical sense) filtrations on $\Bbbk$-modules arise uniquely from a mixed structure on the associated graded of the filtration, and moreover that this assignment is $t$-exact. This can be seen as a slight improvement on \cite[Proposition $1.3.1$]{derivedfoliations2} and \cite[Proposition $2.27$]{calaque2021lie}, which exhibited a fully faithful embedding $\Modmixgr_{\Bbbk}\hookrightarrow\Modfil_{\Bbbk}$ and characterized its essential image, without considering any $t$-structure on those $\infinity$-categories.
\subsection{Recollection on filtered modules}
We start by reviewing some important concepts concerning filtered $\Bbbk$-module, following \cite{BMS} and \cite{enhancingfiltered}.
\begin{defn}[{\cite[Definition $5.1$]{BMS}}]Let $\ZZ_{\geqslant}\coloneqq\ZZ^{\op}_{\leqslant}$ be the $1$-category on the poset $\ZZ$ with reverse order (i.e. there exists an arrow $n\to m$ whenever $n\geqslant m$). The \textit{$\infinity$-category of filtered $\Bbbk$-modules} is the $\infinity$-functor $\infinity$-category$$\Modfil_{\Bbbk}\coloneqq{\Fun}{\lp\ZZ_{\geqslant},\hsp\Mod_{\Bbbk}\rp.}$$
\end{defn}
\begin{notation}
	An object of $\Modfil_{\Bbbk}$ consists of a tower of morphisms
	$$\ldots\longrightarrow M_{p+1}\longrightarrow M_p\longrightarrow M_{p-1}\longrightarrow\dots$$
	where $M_p$ is a $\Bbbk$-module for any integer $p$.  We shall denote the object associated to such tower with $M_{\bullet}$, while the $\Bbbk$-module $M_p$ will be the $p$ weight component of $M_{\bullet}$.\\Consistently, a natural transformation between two filtered $\Bbbk$-modules $M_{\bullet}\to N_{\bullet}$  (that is, a sequence of morphisms $f_p\colon M_p\to N_p$ for any integer $p$ with obvious compatibility with the transition morphisms $M_p\to M_{p-1}$ and $N_p\to N_{p-1}$) shall be denoted by $f_{\bullet}$.
\end{notation}
\begin{parag}
	The selection of the $p$ weight component of a filtered $\Bbbk$-module $M_{\bullet}$ is functorial, i.e. there exists an $\infinity$-functor\begin{align}
	\label{functor:weightp}
	(-)_p\colon\Modfil_{\Bbbk}\longrightarrow\Mod_{\Bbbk},
	\end{align}
	informally described on objects by the assignation $M_{\bullet}\mapsto M_p$,  given by the precomposition of an $\infinity$-functor $M_{\bullet}\colon\ZZ_{\geqslant}\to \Mod_{\Bbbk}$ with the inclusion of the terminal $1$-category $\{ p\}\subseteq \ZZ_{\geqslant}$.
\end{parag}
\begin{construction}
	Given a filtered $\Bbbk$-module $M_{\bullet}$, we can do more than selecting its $p$ weight component; we can also consider the $(-)_{\scriptstyle\infty}$ and $(-)_{-\scriptstyle\infty}$ $\infinity$-functors, given respectively by\begin{align}
	\label{functor:limitfiltration}
	(-)_{\scriptstyle\infty}\colon\Modfil_{\Bbbk}\coloneqq {\Fun}{\lp\ZZ_{\geqslant},\hsp\Mod_{\Bbbk}	\rp}\overset{\lim}{\longrightarrow}\Mod_{\Bbbk}
	\end{align}and\begin{align}\label{functor:colimitfiltration}
	(-)_{-\scriptstyle\infty}\colon\Modfil_{\Bbbk}\coloneqq {\Fun}{\lp\ZZ_{\geqslant},\hsp\Mod_{\Bbbk}\rp}\xrightarrow{\colim}\Mod_{\Bbbk}.
	\end{align}\end{construction}
\begin{defn}
	\label{def:completefiltered}
	A filtered $\Bbbk$-module $M_{\bullet}$ is \textit{complete} if $M_{\scriptstyle\infty}\simeq 0$.
\end{defn}
The complete filtered $\Bbbk$-modules are gathered in a full sub-$\infinity$-category of $\Modfil_{\Bbbk}$, which we shall denote by $\Modfilcomp_{\Bbbk}$. 
\begin{parag}
	With the above notations, the $p$ weight part $M_p$ of a filtered $\Bbbk$-module $M_{\bullet}$ must be thought as the $p$-th part of an exhaustive filtration on the $\Bbbk$-module $M_{-\scriptstyle\infty}$, while the $\Bbbk$-module $M_{\scriptstyle\infty}$ determines whether such filtration is separated Hausdorff \textit{in the homotopical setting} (which is not always equivalent to classical one - see \cite[Remark $2.13$]{enhancingfiltered} for a classically separated filtration which is not complete in our sense). 
\end{parag}
\begin{defn}Given a filtered $\Bbbk$-module $M_{\bullet}$, consider the inclusion of $\Delta^1\simeq \{ p+1\to p\}$ in $\ZZ_{\geqslant}$. This provides an $\infinity$-functor $\Modfil_{\Bbbk}\coloneqq{\Fun}{\lp\ZZ_{\geqslant},\hsp \Mod_{\Bbbk}\rp}\to {\Fun}{\lp\Delta^1,\hsp\Mod_{\Bbbk}\rp}$ which selects the morphism $M_{p+1}\to M_p$ in a filtered $\Bbbk$-module $M_{\bullet}$. \\
	The \textit{$p$-th graded piece $\infinity$-functor} is the composition of the previous $\infinity$-functor with the cofiber $\infinity$-functor$$\Gr_p\colon\Modfil_{\Bbbk}\longrightarrow{\Fun}{\lp \Delta^1,\hsp\Mod_{\Bbbk}\rp}\xrightarrow{\cofib}\Mod_{\Bbbk}.$$
\end{defn}
\begin{parag}
	\label{parag:monoidalstructureonmodfil}The $\infinity$-category $\Modfil_{\Bbbk}$ is naturally a closed symmetric monoidal $\infinity$-category (\cite[§ $2.23$]{enhancingfiltered}); such  monoidal structure is inherited in some sense by the closed symmetric monoidal structure on $\Mod_{\Bbbk}$ via the Day convolution. We briefly recall some important properties and useful explicit constructions.
	\begin{enumerate}
		\item Given two filtered $\Bbbk$-modules $M_{\bullet}$ and $N_{\bullet}$, the tensor product $M_{\bullet}\otimes^{\fil}_{\Bbbk}N_{\bullet}$ is the filtered $\Bbbk$-module whose $p$-th weight component is given by the formula
		$$\lp M_{\bullet}\otimes_{\Bbbk}^{\fil}N_{\bullet}\rp_p\coloneqq\underset{{\substack{i+j\geqslant p\\i,j\in\ZZ_{\geqslant}}}}{\colim}\hsp M_i\otimes_{\Bbbk} N_j.$$ 
		\item The unit for $\otimes_{\Bbbk}^{\fil}$ is the sequence $\Bbbk^{\leqslant 0}$ given by $\Bbbk$ in non positive weights and by $0$ otherwise, with only identities and trivial morphisms, i.e. by the sequence$$\ldots=0= 0\to \Bbbk = \Bbbk = \Bbbk =\dots$$where the first copy of $\Bbbk$ sits in weight $0$.
		\item Given two filtered $\Bbbk$-modules $M_{\bullet}$ and $N_{\bullet}$, the internal mapping space ${\Map_{\Bbbk}^{\fil}}{\lp M_{\bullet},\hsp N_{\bullet}\rp}$ is the filtered $\Bbbk$-module whose $p$-th weight component is given by the end formula $$\lp{\Map_{\Bbbk}^{\fil}}{\lp M_{\bullet},\hsp N_{\bullet}\rp}\rp_p\coloneqq \int_{q\in\ZZ_{\geqslant}}{ \Map_{\Bbbk}}{\lp M_q,\hsp N_{p+q}\rp}.$$
	\end{enumerate}
\end{parag}
\begin{lemman}[{\cite[Lemma $5.2$]{BMS}}]\
	\label{lemma:scholze}
	\begin{enumerate}[label=(\arabic{enumi}), ref=\thelemman.\arabic{enumi}]
		\item The collection of $\infinity$-functors $\{\Gr_p\}_{p\in\ZZ}$ and $(-)_{\scriptstyle\infty}$ is jointly conservative on $\Modfil_{\Bbbk}$. On the sub-$\infinity$-category $\Modfilcomp_{\Bbbk}$, the $\infinity$-functors $\{\Gr_p\}_{p\in\ZZ}$ are already jointly conservative. 
		\item\label{lemma:scholze2}The inclusion $\Modfilcomp_{\Bbbk}\subseteq\Modfil_{\Bbbk}$ admits a left adjoint $$\widehat{(-)}\colon\Modfil_{\Bbbk}\longrightarrow\Modfilcomp_{\Bbbk},$$which sends $M_{\bullet}$ to its completion $\widehat{M}_{\bullet}$ described in its $p$ weight component by $$\widehat{M}_p\coloneqq{\cofib}{\lp M_{\scriptstyle\infty}\to M_p\rp}.$$Such completion $\infinity$-functor does not alter the graded pieces of $M_{\bullet}$.
		\item Both $\Modfil_{\Bbbk}$ and $\Modfilcomp_{\Bbbk}$ have all limits and colimits. On the former, both $\{(-)_p\}_{p\in\ZZ}$ and $\{\Gr_p\}_{p\in\ZZ}$ commute with all limits and colimits; on the latter, $\{\Gr_p\}_{p\in\ZZ}$ commute with all limits and colimits.
		\item \label{lemma:scholze4}There exists a (unique) closed symmetric monoidal structure on $\Modfilcomp_{\Bbbk}$ compatible with the one on $\Modfil_{\Bbbk}$ via the completion $\infinity$-functor.
		\item  \label{lemma:scholze5} For any $M_{\bullet}$ and $N_{\bullet}$ in $\Modfil_{\Bbbk}$ or $\Modfilcomp_{\Bbbk}$, we have an equivalence$$\Gr_p\lp M_{\bullet}\otimes_{\Bbbk}^{\fil}N_{\bullet}\rp\simeq\bigoplus_{i+j=p}\Gr_iM_{\bullet}\otimes_{\Bbbk}\Gr_jN_{\bullet}.$$
	\end{enumerate}
\end{lemman}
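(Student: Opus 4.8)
The plan is to argue entirely inside the concrete model $\Modfil_{\Bbbk}=\Fun(\ZZ_{\geqslant},\Mod_{\Bbbk})$, reducing the bulk of the statements to a single cofiber sequence of filtered modules together with soft facts about reflective localizations. The parts of (3) concerning $\Modfil_{\Bbbk}$ are immediate: a functor $\infinity$-category into the presentable stable $\infinity$-category $\Mod_{\Bbbk}$ is again presentable and stable, with limits and colimits computed pointwise, so each $(-)_{p}$ is exact and preserves all limits and colimits; and since $\Gr_{p}$ is restriction along $\{p+1\to p\}\hookrightarrow\ZZ_{\geqslant}$ followed by the cofiber $\infinity$-functor, and in a stable $\infinity$-category $\cofib(f)\simeq\fib(f)[1]$ is simultaneously a finite colimit and, up to shift, a limit, $\Gr_{p}$ too commutes with all limits and colimits. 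Finally, the constant-diagram $\infinity$-functor $\mathrm{c}\colon\Mod_{\Bbbk}\to\Modfil_{\Bbbk}$ is left adjoint to $(-)_{\infty}=\lim$, so $\Map_{\Modfil_{\Bbbk}}(\mathrm{c}(V),N_{\bullet})\simeq\Map_{\Mod_{\Bbbk}}(V,N_{\infty})$; in particular this mapping space is contractible whenever $N_{\bullet}$ is complete.

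For (2), set $\widehat{M}_{p}\coloneqq\cofib(M_{\infty}\to M_{p})$; these assemble into a cofiber sequence $\mathrm{c}(M_{\infty})\to M_{\bullet}\to\widehat{M}_{\bullet}$ in $\Modfil_{\Bbbk}$. Applying the exact $\infinity$-functor $(-)_{\infty}$ and using that $\lim_{p}(M_{\infty}\to M_{p})$ is the identity of $M_{\infty}$ by the very definition of $M_{\infty}$ shows $(\widehat{M}_{\bullet})_{\infty}\simeq 0$, so $\widehat{M}_{\bullet}$ is complete; mapping the cofiber sequence into a complete $N_{\bullet}$ and invoking the vanishing just noted gives $\Map(\widehat{M}_{\bullet},N_{\bullet})\simeq\Map(M_{\bullet},N_{\bullet})$, so $\widehat{(-)}$ is left adjoint to the inclusion, with unit the evident map $M_{\bullet}\to\widehat{M}_{\bullet}$; and comparing the defining cofiber sequences at levels $p+1$ and $p$ and taking cofibers of the transition maps produces a cofiber sequence $0\to\Gr_{p}M_{\bullet}\to\Gr_{p}\widehat{M}_{\bullet}$, whence $\Gr_{p}\widehat{M}_{\bullet}\simeq\Gr_{p}M_{\bullet}$. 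The conservativity statement in (1) comes from the same circle of ideas: given $f\colon M_{\bullet}\to N_{\bullet}$ with cofiber $C_{\bullet}$, if $\Gr_{p}(f)$ is an equivalence for all $p$ then $\Gr_{p}C_{\bullet}\simeq 0$ for all $p$, so every transition map $C_{p+1}\to C_{p}$ is an equivalence, $C_{\bullet}$ is levelwise equivalent to a constant tower, and $C_{p}\simeq C_{\infty}$ for all $p$; the extra hypothesis that $f_{\infty}$ is an equivalence — automatic over $\Modfilcomp_{\Bbbk}$, where $C_{\bullet}$ is again complete and hence $C_{\infty}\simeq 0$ — then forces $C_{\bullet}\simeq 0$. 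In particular a complete filtered module with all graded pieces zero vanishes. As for (3) over $\Modfilcomp_{\Bbbk}$: complete modules are closed under limits in $\Modfil_{\Bbbk}$, as one sees by applying the limit-preserving $\infinity$-functor $(-)_{\infty}$, and $\widehat{(-)}$ exhibits $\Modfilcomp_{\Bbbk}$ as a reflective localization of a presentable $\infinity$-category, hence presentable, with limits computed as in $\Modfil_{\Bbbk}$ and colimits obtained by completing the colimit taken in $\Modfil_{\Bbbk}$; since $\Gr_{p}\circ\widehat{(-)}\simeq\Gr_{p}$, the $\infinity$-functor $\Gr_{p}$ commutes with both.

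It remains to deal with the monoidal statements, for which I would establish (5) over $\Modfil_{\Bbbk}$ first and then deduce (4) and the complete case of (5). For $\Gr_{p}$ of the Day-convolution product one can compute directly from $(M_{\bullet}\otimesfil_{\Bbbk}N_{\bullet})_{p}\simeq\colim_{i+j\geqslant p}M_{i}\otimes_{\Bbbk}N_{j}$: the cofiber of the transition map $(M\otimesfil N)_{p+1}\to(M\otimesfil N)_{p}$ is governed by the summands on the antidiagonal $\{i+j=p\}$, and reorganizing the colimit there yields $\bigoplus_{i+j=p}\Gr_{i}M_{\bullet}\otimes_{\Bbbk}\Gr_{j}N_{\bullet}$. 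A cleaner route, which also settles the higher coherences, notes that under the standard equivalence $\Modfil_{\Bbbk}\simeq\Qcoh(\Agm)$ the assembled $\infinity$-functor $\Gr_{\bullet}=\bigoplus_{p}\Gr_{p}\colon\Modfil_{\Bbbk}\to\Modgr_{\Bbbk}$ is restriction of quasi-coherent sheaves along the closed immersion $\BGm\hookrightarrow\Agm$ of the origin, hence symmetric monoidal, and (5) is its value on objects. Granting this, (4) is formal: by (1)--(2) the localization $\widehat{(-)}$ inverts exactly the morphisms $f$ with $\Gr_{p}(f)$ an equivalence for all $p$, and by (5) this class is stable under $-\otimesfil_{\Bbbk}N_{\bullet}$, so the Day-convolution structure descends to a necessarily unique symmetric monoidal structure on $\Modfilcomp_{\Bbbk}$ for which $\widehat{(-)}$ is symmetric monoidal (see \cite[\S 2.2]{ha}); it is again closed, since $\Mapfil_{\Bbbk}(M_{\bullet},N_{\bullet})$ is already complete whenever $N_{\bullet}$ is (applying $(-)_{\infty}$ to the internal mapping object produces an end of mapping objects into $N_{\infty}\simeq 0$) and hence serves as internal mapping object in $\Modfilcomp_{\Bbbk}$ too. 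Finally the complete case of (5) is immediate: for complete $M_{\bullet},N_{\bullet}$ their product in $\Modfilcomp_{\Bbbk}$ is $\widehat{M_{\bullet}\otimesfil_{\Bbbk}N_{\bullet}}$, which by (2) has the same graded pieces as $M_{\bullet}\otimesfil_{\Bbbk}N_{\bullet}$.

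The one genuinely computational point is (5). Running the argument straight from the colimit formula requires careful bookkeeping of the shifted poset $\{i+j\geqslant p\}$ and of the cofiber along its antidiagonal, and this is where the real work sits; the geometric argument via $\Modfil_{\Bbbk}\simeq\Qcoh(\Agm)$ sidesteps it at the cost of importing that equivalence as input. Everything else — both conservativity claims, the construction and graded-exactness of the completion functor, the behaviour of limits and colimits, and the descent of the monoidal structure — is a matter of manipulating the single cofiber sequence $\mathrm{c}(M_{\infty})\to M_{\bullet}\to\widehat{M}_{\bullet}$ together with the formal theory of reflective (monoidal) localizations.
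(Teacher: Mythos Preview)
The paper does not supply its own proof of this lemma; it is stated with attribution to \cite[Lemma 5.2]{BMS} and used as a black box throughout. Your proposal is a correct and essentially self-contained reconstruction of the standard arguments, organized around the single cofiber sequence $\mathrm{c}(M_{\infty})\to M_{\bullet}\to\widehat{M}_{\bullet}$ and the formalism of reflective monoidal localizations, which is exactly how one would expect such a proof to go.

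Two small points worth making explicit. First, in your treatment of (2) and implicitly of (1), you use that $(-)_{\infty}$ sends cofiber sequences to cofiber sequences; this is because $(-)_{\infty}$ is a right adjoint between stable $\infty$-categories and hence exact, but it is worth saying so, since a reader might balk at a limit functor preserving cofibers. The same observation is what guarantees, in (1), that the cofiber of a map between complete filtered modules is again complete. Second, for (5) your geometric route via $\Modfil_{\Bbbk}\simeq\Qcoh(\Agm)$ is clean but imports that equivalence as input (it appears in the paper only implicitly, via the reference to \cite{geometryoffiltrations}); the direct combinatorial argument from the Day-convolution formula is elementary and is the one actually carried out in \cite{BMS}, so if you want the proof to be genuinely self-contained you should spell that step out rather than gesture at the bookkeeping.
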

\cref{lemma:scholze} allows us to say something more about the sub-\infinity-category of complete filtered $\Bbbk$-modules: namely, it is stable.
\begin{propositionn}
	\label{prop:completearestable}
	The full sub-\infinity-category $\Modfilcomp_{\Bbbk}$ of $\Modfil_{\Bbbk}$ is stable.
\end{propositionn}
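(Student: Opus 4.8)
The plan is to exhibit $\Modfilcomp_{\Bbbk}$ as a full sub-$\infinity$-category of the stable $\infinity$-category $\Modfil_{\Bbbk}=\Fun(\ZZ_{\geqslant},\Mod_{\Bbbk})$ which contains a zero object and is closed under finite limits and finite colimits computed in $\Modfil_{\Bbbk}$; by the standard criterion for stable sub-$\infinity$-categories (\cite[\S 1.1.3]{ha}) this forces $\Modfilcomp_{\Bbbk}$ to be stable, with the inclusion exact. The two structural facts I would lean on are: first, that limits, colimits and shifts in the $\infinity$-functor category $\Modfil_{\Bbbk}$ are computed weight-wise, i.e. the evaluation $\infinity$-functors $(-)_p$ preserve all limits and colimits (this is part of \cref{lemma:scholze}); and second, the elementary observation that in any stable $\infinity$-category the suspension $\Sigma$ is an equivalence, so it commutes with all limits and colimits, and likewise $\fib$ and $\cofib$ interchange via $\cofib(f)\simeq\Sigma\,\fib(f)$.

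With this in hand, the verifications are short. The zero filtered $\Bbbk$-module $0_{\bullet}$ is complete, since $(0_{\bullet})_{\scriptstyle\infty}=\lim_{\ZZ_{\geqslant}}0\simeq 0$. For closure under finite limits, note that $(-)_{\scriptstyle\infty}=\lim_{\ZZ_{\geqslant}}(-)$ is itself a limit, hence commutes with the (weight-wise) limit computing a finite limit $L_{\bullet}$ of complete filtered modules $\{M^{(i)}_{\bullet}\}$; thus $L_{\scriptstyle\infty}\simeq\lim_i M^{(i)}_{\scriptstyle\infty}\simeq\lim_i 0\simeq 0$, so $L_{\bullet}$ is complete. (Equivalently, by \cref{lemma:scholze2} the inclusion is reflective, hence automatically preserves limits.) The one mildly delicate point is closure under finite colimits, equivalently cofibers: given $f_{\bullet}\colon M_{\bullet}\to N_{\bullet}$ in $\Modfilcomp_{\Bbbk}$, its cofiber in $\Modfil_{\Bbbk}$ is weight-wise, so $(\cofib f_{\bullet})_{\scriptstyle\infty}\simeq\lim_p\cofib(f_p)\simeq\lim_p\Sigma\,\fib(f_p)\simeq\Sigma\,\fib\bigl(\lim_p M_p\to\lim_p N_p\bigr)\simeq\Sigma\,\fib(M_{\scriptstyle\infty}\to N_{\scriptstyle\infty})\simeq\Sigma\,\fib(0\to 0)\simeq 0$, using that $\Sigma$ and $\fib$ commute with the limit over $\ZZ_{\geqslant}$. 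Hence $\cofib f_{\bullet}$ is again complete, and together with $0_{\bullet}$ this gives closure under all finite colimits in $\Modfil_{\Bbbk}$.

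Finally I would invoke the criterion cited above to conclude that $\Modfilcomp_{\Bbbk}$ is stable. The ``hard part'' here is really only bookkeeping: making sure that stability of $\Modfil_{\Bbbk}$ and the weight-wise computation of (co)limits are the only inputs, and in particular that the interchange of $\Sigma$ with the limit defining $(-)_{\scriptstyle\infty}$ in the cofiber computation is justified — there is no genuinely hard step. As a byproduct of this argument one also sees that finite limits and finite colimits of complete filtered $\Bbbk$-modules are computed as in $\Modfil_{\Bbbk}$, which will be convenient later when comparing the Beilinson $t$-structure with the mixed graded Postnikov $t$-structure.
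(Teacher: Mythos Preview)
Your proof is correct and follows essentially the same approach as the paper: both show that $\Modfilcomp_{\Bbbk}$ contains the zero object and is closed under finite limits and finite colimits computed in $\Modfil_{\Bbbk}$, with the only nontrivial check being that $(-)_{\scriptstyle\infty}$ vanishes on a finite colimit of complete objects. The paper phrases this via pushouts and invokes \cite[Proposition $1.1.4.1$]{ha} (finite colimits commute with limits in a stable \infinity-category), while you phrase it via cofibers and the identity $\cofib\simeq\Sigma\,\fib$ together with $\Sigma$ being an equivalence; these are the same argument in slightly different clothing.
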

\begin{proof}
	The \infinity-category $\Modfilcomp_{\Bbbk}$ has a zero object, since the trivial filtration is clearly complete. Moreover, since their inclusion in all filtered $\Bbbk$-modules admits a left adjoint, it follows that all limits in $\Modfilcomp_{\Bbbk}$ are computed as in $\Modfil_{\Bbbk}$. So, we only need to prove that given a diagram of complete filtered $\Bbbk$-modules of the form$$M_{\bullet}\longleftarrow N_{\bullet}\longrightarrow P_{\bullet}$$its colimit $R_{\bullet}$ is again a complete filtered $\Bbbk$-module, i.e., $$\lim_{q\in\ZZ_{\geqslant}}R_q\simeq 0.$$Since limits and colimits in $\Modfil_{\Bbbk}$ are computed weight-wise, we have that$$\lim_{q\in\ZZ_{\geqslant}}R_q\simeq \lim_{q\in\ZZ_{\geqslant}}{\lp M_q\coprod_{N_q}P_q\rp}.$$Since in a stable \infinity-category finite colimits commute with all limits (\cite[Proposition $1.1.4.1$]{ha}), we can write$$\lim_{q\in\ZZ_{\geqslant}}R_q\simeq \lim_{q\in\ZZ_{\geqslant}}M_q\coprod_{\lim_{q\in\ZZ_{\geqslant}}N_q}\lim_{q\in\ZZ_{\geqslant}}P_q$$and now since $M_{\bullet}$, $N_{\bullet}$ and $P_{\bullet}$ are all complete, it follows that all the limits on the right hand side are $0$, hence the pushout $R_{\bullet}$ is complete as well. In particular, any diagram in $\Modfilcomp_{\Bbbk}$ is a pullback (resp. pushout) if and only if it is a pullback (resp. pushout) in $\Modfil_{\Bbbk}$, and being the latter stable it follows that $\Modfilcomp_{\Bbbk}$ has to be stable as well.
\end{proof}
We now state (without proof) this important and well known result on the $t$-structure on the (homotopy category of the) $\infinity$-category of filtered $\Bbbk$-modules. Note that this theorem \textit{does not need} the assumption that $\Bbbk$ is a classical ring in characteristic $0$.
\begin{theorem}[Beilinson $t$-structure, {\cite[Appendix A]{beilinsonderived}} and {\cite[Theorem $5.4$]{BMS}}]
	\label{thm:beilinsontstructure}
	Let $\lp\Modfil_{\Bbbk}\rp_{\!\geqslant 0}$ be the full $\infinity$-subcategory of $\Modfil_{\Bbbk}$ spanned by those filtered modules such that ${\Gr}_pM_{\bullet}$ is $(-p)$-connective for all integers $p$. Dually, let $\lp\Modfil_{\Bbbk}\rp_{\!\leqslant 0}$ be the full $\infinity$-subcategory spanned by those filtered modules such that $M_{p}$ is $(-p)$-coconnective for all integers $p$. Then these two $\infinity$-subcategories define a $t$-structure on $\Modfil_{\Bbbk}$ whose heart $\lp\Modfil_{\Bbbk}\rp^{\!\!\heartsuit}$ is equivalent, as an abelian $1$-category, to the usual $1$-category of chain complexes of ${\Bbbk}$-modules $\dgMod_{\Bbbk}$ when ${\Bbbk}$ is a discrete ring. 
\end{theorem}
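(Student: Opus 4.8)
The plan is to reproduce, for filtered modules, the abstract argument used in the proof of \cref{thm:mixedgradedtstructure}, with the associated-graded functors $\{\Gr_p\}_{p\in\ZZ}$ playing the role of the weight-component functors. First I would check that $\lp\Modfil_{\Bbbk}\rp_{\!\geqslant 0}$ is a presentable $\infinity$-subcategory of $\Modfil_{\Bbbk}$ closed under colimits, extensions and the shift $[1]$: by \cref{lemma:scholze} each $\Gr_p$ is exact and preserves colimits, so closure under colimits and extensions reduces to the stability of $(-p)$-connective $\Bbbk$-modules under colimits and extensions \cite[Proposition $7.1.1.13$]{ha}, while closure under $[1]$ holds since $\Gr_p\lp M_{\bullet}[1]\rp\simeq\lp\Gr_pM_{\bullet}\rp[1]$ is $(-p+1)$-connective, hence $(-p)$-connective. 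Presentability follows exactly as in \cref{thm:mixedgradedtstructure}: $\lp\Modfil_{\Bbbk}\rp_{\!\geqslant 0}$ is the $(\infinity,2)$-pullback of the presentable $\infinity$-categories $\Modfil_{\Bbbk}$, $\prod_{p\in\ZZ}\lp\Mod_{\Bbbk}\rp_{\geqslant-p}$ and $\Modgr_{\Bbbk}\simeq\prod_{p}\Mod_{\Bbbk}$ along the colimit-preserving functor $\bigoplus_{p}\Gr_p$ and the evident inclusion. Hence \cite[Proposition $1.4.4.11$]{ha} endows $\Modfil_{\Bbbk}$ with an accessible $t$-structure whose connective part is precisely $\lp\Modfil_{\Bbbk}\rp_{\!\geqslant 0}$, and the $(-1)$-coconnective objects are forced to be the right orthogonal of $\lp\Modfil_{\Bbbk}\rp_{\!\geqslant 0}$.

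The content of the theorem is the identification of that right orthogonal with $\{N_{\bullet}:N_p\text{ is }(-p-1)\text{-coconnective for all }p\in\ZZ\}$. For the necessity direction, I would use that the weight-$p$ functor $(-)_p\colon\Modfil_{\Bbbk}\to\Mod_{\Bbbk}$ is corepresented by the filtered $\Bbbk$-module $\Bbbk^{\leqslant p}$ equal to $\Bbbk$ in weights $\leqslant p$ and $0$ above, which satisfies $\Gr_q\Bbbk^{\leqslant p}\simeq\Bbbk$ for $q=p$ and $0$ otherwise; thus $\Bbbk^{\leqslant p}[-p]$ is connective, and if $N_{\bullet}$ is right orthogonal to the connective part then $$N_p[p]\simeq\Map_{\Modfil_{\Bbbk}}\lp\Bbbk^{\leqslant p}[-p],\hsp N_{\bullet}\rp\simeq *,$$ which says exactly that $N_p$ is $(-p-1)$-coconnective.

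The sufficiency direction — that such an $N_{\bullet}$ really is right orthogonal to every connective filtered module — is the hard part. I would first observe that the hypothesis forces $N_{\bullet}$ to be complete: in each fixed homotopy degree the tower $\{N_q\}_{q}$ is eventually zero, so $N_{\scriptstyle\infty}\simeq 0$; therefore, by \cref{lemma:scholze2}, any connective $M_{\bullet}$ may be replaced by its completion $\widehat{M}_{\bullet}$, which has the same associated graded and so is still connective. One then dévisses $\widehat{M}_{\bullet}$ by its stupid truncations $\sigma^{\geqslant n}\widehat{M}_{\bullet}$ (agreeing with $\widehat{M}_{\bullet}$ in weights $\geqslant n$ and zero below), which assemble $\widehat{M}_{\bullet}$ as a sequential colimit and are themselves built by iterated extensions from the one-weight modules $\operatorname{ins}_q(M_q)$; using that $\Map_{\Modfil_{\Bbbk}}\lp\operatorname{ins}_q(X),\hsp N_{\bullet}\rp\simeq\Map_{\Mod_{\Bbbk}}\lp X,\hsp\fib\lp N_q\to N_{q-1}\rp\rp$ and that the $\Bbbk$-modules $\fib\lp N_q\to N_{q-1}\rp$ are $(-q-1)$-coconnective, one reduces to showing that a certain iterated limit of mapping spectra is $(-1)$-coconnective. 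The delicate point — the one carried out in \cite[Appendix A]{beilinsonderived} and \cite[Theorem $5.4$]{BMS} — is that, although the individual pieces need not be $(-1)$-coconnective (the weight components of an unbounded connective filtered module can carry homotopy in arbitrarily negative degrees), the cancellation obtained on passing to the limit along the cofiltration, together with the end description $\int_{q}\Map_{\Mod_{\Bbbk}}\lp M_q,\hsp N_q\rp$ of the mapping spectrum (\cref{parag:monoidalstructureonmodfil}), does make $\Map_{\Modfil_{\Bbbk}}\lp M_{\bullet},\hsp N_{\bullet}\rp$ contractible. I expect this bookkeeping, rather than the formal input, to be the main obstacle.

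Finally, for the heart: an object $M_{\bullet}$ of $\lp\Modfil_{\Bbbk}\rp^{\!\!\heartsuit}$ has, for every $p$, that $\Gr_pM_{\bullet}$ is $(-p)$-connective and also $(-p)$-coconnective (the latter because $M_p$ is $(-p)$-coconnective and $M_{p+1}$ is $(-p-1)$-coconnective), hence is a discrete $\Bbbk$-module concentrated in homological degree $-p$. Setting $C_n\coloneqq\pi_n\lp\Gr_{-n}M_{\bullet}\rp$ and reading off the differential $C_n\to C_{n-1}$ from the maps $\Gr_pM_{\bullet}\to\Gr_{p+1}M_{\bullet}[1]$ induced by the transition maps of $M_{\bullet}$, one checks this gives an equivalence of abelian categories $\lp\Modfil_{\Bbbk}\rp^{\!\!\heartsuit}\simeq\dgMod_{\Bbbk}$ when $\Bbbk$ is discrete, recovering \cite[Theorem $5.4$]{BMS}.
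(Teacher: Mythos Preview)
The paper does not prove this theorem at all: it is stated explicitly ``without proof'' and attributed to the cited references. So there is no argument in the paper to compare against; your proposal is to be judged on its own merits.

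Your overall architecture is sound and mirrors the paper's proof of \cref{thm:mixedgradedtstructure}: presentability and closure of $\lp\Modfil_{\Bbbk}\rp_{\geqslant0}$ via \cite[Proposition~1.4.4.11]{ha} is correct, the necessity direction using the corepresenting objects $\Bbbk^{\leqslant p}[-p]$ is clean and correct, and the heart computation is right.

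The sufficiency direction, however, has a genuine gap in the d\'evissage. Your ``stupid truncation'' $\sigma^{\geqslant n}\widehat{M}_{\bullet}$ (equal to $\widehat{M}_p$ for $p\geqslant n$ and $0$ below) does \emph{not} admit a natural map to $\widehat{M}_{\bullet}$: the compatibility square at the transition $n\to n-1$ forces the structure map $\widehat{M}_n\to\widehat{M}_{n-1}$ to be null. What exists is a map the other way, $\widehat{M}_{\bullet}\to\sigma^{\geqslant n}\widehat{M}_{\bullet}$ (it is the unit of the right Kan extension adjunction), so these truncations exhibit $\widehat{M}_{\bullet}$ as a \emph{limit}, not a colimit. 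That is useless for bounding $\Map_{\Modfil_{\Bbbk}}(\widehat{M}_{\bullet},N_{\bullet})$ from the first variable. The variant that does give a colimit presentation is $\sigma_{<n}\widehat{M}_{\bullet}$ (zero in weights $\geqslant n$, equal to $\widehat{M}_p$ below), and indeed $\widehat{M}_{\bullet}\simeq\colim_{n\to+\infty}\sigma_{<n}\widehat{M}_{\bullet}$; but the successive cofibers of this tower are $\operatorname{ins}_n(\widehat{M}_n)$, built from the filtered pieces $\widehat{M}_n$ rather than the graded pieces $\Gr_n M_{\bullet}$, and the connectivity hypothesis says nothing about $\widehat{M}_n$. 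So the estimate you want at the end does not follow from this d\'evissage either. You correctly flag this as ``the delicate point'', but the specific reduction you propose does not work; the argument in \cite{BMS} proceeds instead by constructing the Beilinson truncation functors explicitly and verifying the $t$-structure axioms directly.
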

\begin{remark}
	The $t$-structure of \cref{thm:beilinsontstructure} is induced by a $t$-structure on graded $\Bbbk$-modules in the following sense. The \infinity-category $\Modgr_{\Bbbk}$ is endowed with a $t$-structure described as follows: the connective objects are those graded $\Bbbk$-modules $M_{\bullet}$ such that $M_p$ is $(-p)$-connective (for the Postnikov $t$-structure on $\Mod_{\Bbbk}$) for all $p$'s, and dually the coconnective objects are those graded $\Bbbk$-modules $M_{\bullet}$ such that $M_p$ is $(-p)$-coconnective for all $p$'s. Such $t$-structure can be lifted via the \infinity-functor$$\Gr_{\bullet}\colon\Modfil_{\Bbbk}\longrightarrow\Modgr_{\Bbbk}$$to a $t$-structure on $\Modfil_{\Bbbk}$ that makes $\Gr_{\bullet}$ a $t$-exact \infinity-functor between stable \infinity-categories endowed with a $t$-structure. This is precisely the Beilinson $t$-structure.
\end{remark}
\subsection{Left completeness of $t$-structures}
Mixed graded $\Bbbk$-modules are not only fully faithfully embedded in filtered $\Bbbk$-modules, but such inclusion is universal in some precise sense. Namely, the $\infinity$-category of mixed graded $\Bbbk$-modules with the $t$-structure of \cref{thm:mixedgradedtstructure} is the \textit{left completion} of the $\infinity$-category of filtered $\Bbbk$-modules with respect to the Beilinson $t$-structure of \cref{thm:beilinsontstructure}. In order to precisely formalize and prove this assertion, we need to recall some important concepts concerning $t$-structures and left completions.
\begin{parag}
	Let $\scrC$ be a stable $\infinity$-category endowed with a $t$-structure $\lp \scrC_{\geqslant0},\hsp \scrC_{\leqslant 0}\rp$. For all integers $n$, we have the $n$-connective and $n$-coconnective cover $\infinity$-functors $\tau_{\geqslant n}\colon \scrC\to\scrC_{\geqslant n}$ and $\tau_{\leqslant n}\colon\scrC\to\scrC_{\leqslant n}$. In particular, we have a tower
	\begin{align}
	\label{diagram:leftcompletion}
	\ldots\xrightarrow{\tau_{\leqslant n+1}}\scrC_{\leqslant n+1}\xrightarrow{\tau_{\leqslant n}}\scrC_{\leqslant n}\xrightarrow{\tau_{\leqslant n-1}}\scrC_{\leqslant n-1}\xrightarrow{\tau_{\leqslant n-2}}\ldots
	\end{align}
\end{parag}
\begin{defn}[{\cite[Section $1.2.1$]{ha}}]
	The \textit{left completion} $\widehat{\scrC}$ of $\scrC$ is the limit of the diagram \ref{diagram:leftcompletion}.\\
	We will say that $\scrC$ is \textit{left complete} if the canonical morphism $\scrC\to \widehat{\scrC}$ is an equivalence of stable $\infinity$-categories.
\end{defn}
Let us recall this important result concerning left completions.
\begin{propositionn}[{\cite[Proposition $1.2.1.17$]{ha}}]
	\label{prop:leftcompletions}
	Let $\scrC$ be a stable $\infinity$-category endowed with a $t$-structure. Then:
	\begin{enumerate}
		\item The left completion $\widehat{\scrC}$ is also stable.
		\item The left completion $\widehat{\scrC}$ is naturally endowed with a $t$-structure which can be described as follows. Given an identification of $\widehat{\scrC}$ with the full $\infinity$-subcategory of ${\Fun}{\lp \ZZ_{\leqslant}^{\op},\hsp \scrC\rp}$ spanned by those functors in which $F_n$ factors through $\scrC_{\leqslant n}$ and such that $F_m\to  F_n$ induces an equivalence $\tau_{\leqslant n}F_m\overset{\simeq}{\to} F_n$ for all $n\leqslant m$ in $\ZZ$, then the connective (resp. coconnective) objects of $\widehat{\scrC}$ are those $\infinity$-functors which factor through $\scrC_{\geqslant 0}$ (resp. $\scrC_{\leqslant 0}$).
		\item The canonical $\infinity$-functor $\scrC\to\widehat{\scrC}$ is exact and induces an equivalence $\scrC_{\leqslant 0}\overset{\simeq}{\to}\widehat{\scrC}_{\leqslant 0}$.
	\end{enumerate}
\end{propositionn}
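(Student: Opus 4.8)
The plan is to work throughout with the concrete model of $\widehat{\scrC}$ recorded in the statement: by construction $\widehat{\scrC}=\lim\lp\cdots\to\scrC_{\leqslant n+1}\xrightarrow{\tau_{\leqslant n}}\scrC_{\leqslant n}\to\cdots\rp$, and unwinding this limit identifies $\widehat{\scrC}$ with the full sub-\infinity-category of $\Fun\lp\ZZ_{\leqslant}^{\op},\hsp\scrC\rp$ spanned by the towers $F$ with $F_n\in\scrC_{\leqslant n}$ and $\tau_{\leqslant n}F_m\overset{\simeq}{\to}F_n$ for all $n\leqslant m$ (the reduction to consecutive indices being the relation $\tau_{\leqslant n}\circ\tau_{\leqslant m}\simeq\tau_{\leqslant n}$). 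The one elementary fact I would isolate at the start is that $\scrC_{\leqslant n}\hookrightarrow\scrC$ is a reflective localization with reflector $\tau_{\leqslant n}$: hence $\scrC_{\leqslant n}$ admits all colimits (obtained by applying $\tau_{\leqslant n}$ to the colimit formed in $\scrC$) and the transition \infinity-functors $\tau_{\leqslant n}\colon\scrC_{\leqslant n+1}\to\scrC_{\leqslant n}$, being left adjoints, preserve colimits. It then follows formally that $\widehat{\scrC}$ is pointed and admits finite colimits, computed weight by weight in the $\scrC_{\leqslant n}$.

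For part (1) I would invoke the recognition criterion for stable \infinity-categories of \cite[\S 1.4.2]{ha}: since $\widehat{\scrC}$ is already pointed with finite colimits, it suffices to check that the suspension \infinity-functor $\Sigma\colon\widehat{\scrC}\to\widehat{\scrC}$ is an equivalence. Computing $\Sigma$ weight by weight, and using that suspension in the reflective sub-\infinity-category $\scrC_{\leqslant n}$ is $\tau_{\leqslant n}\lp(-)[1]\rp$ — not the naive shift $(-)[1]$ — together with the identity $\tau_{\leqslant n}\lp Y[1]\rp\simeq\lp\tau_{\leqslant n-1}Y\rp[1]$ and the tower relation $F_{n-1}\simeq\tau_{\leqslant n-1}F_n$, one identifies $\Sigma$ with the reindexing shift $(\Sigma F)_n\simeq F_{n-1}[1]$. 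This is visibly an equivalence, with inverse $(\Sigma^{-1}F)_n\simeq F_{n+1}[-1]$; both land inside $\widehat{\scrC}$ precisely because, in the homological conventions in force, $(-)[1]$ raises and $(-)[-1]$ lowers the coconnectivity bound $\leqslant n$ by one. I expect this bookkeeping — matching homological shifts against truncation indices, and not confusing the operations internal to $\widehat{\scrC}$ with the pointwise ones in $\Fun\lp\ZZ_{\leqslant}^{\op},\hsp\scrC\rp$ — to be the only genuinely delicate point in the argument.

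For part (2) I would declare $\widehat{\scrC}_{\geqslant0}$ (resp.\ $\widehat{\scrC}_{\leqslant0}$) to be the towers $F$ with $F_n\in\scrC_{\geqslant0}$ (resp.\ $F_n\in\scrC_{\leqslant0}$) for every $n$, and verify the $t$-structure axioms. Stability under the relevant shift is immediate from $\lp F[\pm1]\rp_n\simeq F_{n\mp1}[\pm1]$. The vanishing $\Map_{\widehat{\scrC}}\lp F,\hsp G\rp\simeq 0$ for $F\in\widehat{\scrC}_{\geqslant0}$ and $G\in\widehat{\scrC}_{\leqslant-1}$ holds because this mapping object is the limit over $n$ of the $\Map_{\scrC}\lp F_n,\hsp G_n\rp$, each of which vanishes. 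Finally, for $F\in\widehat{\scrC}$ the weight-wise truncation triangles $\tau_{\geqslant0}F_n\to F_n\to\tau_{\leqslant-1}F_n$ are defined over $\widehat{\scrC}$ — here one uses that the truncation \infinity-functors of $\scrC$ commute and that $\tau_{\geqslant0}F_n,\tau_{\leqslant-1}F_n$ still lie in $\scrC_{\leqslant n}$, the latter being closed under limits in $\scrC$ — and, colimits in $\widehat{\scrC}$ being weight-wise, they assemble into a cofibre sequence in $\widehat{\scrC}$ whose outer terms lie in $\widehat{\scrC}_{\geqslant0}$ and $\widehat{\scrC}_{\leqslant-1}$; as $\widehat{\scrC}$ is stable this is also a fibre sequence, so the standard description of $t$-structures (\cite[\S 1.2.1]{ha}) exhibits $\lp\widehat{\scrC}_{\geqslant0},\hsp\widehat{\scrC}_{\leqslant0}\rp$ as a $t$-structure with exactly these connective and coconnective parts.

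For part (3), the canonical \infinity-functor $\varphi\colon\scrC\to\widehat{\scrC}$ is $X\mapsto\lp\tau_{\leqslant n}X\rp_n$. It preserves finite colimits (these are weight-wise in $\widehat{\scrC}$ and each $\tau_{\leqslant n}$ preserves colimits), hence is exact. That $\varphi$ restricts to an equivalence $\scrC_{\leqslant0}\overset{\simeq}{\to}\widehat{\scrC}_{\leqslant0}$ I would check by hand: it lands there since $\tau_{\leqslant n}X\in\scrC_{\leqslant0}$ whenever $X\in\scrC_{\leqslant0}$; it is essentially surjective because any $F\in\widehat{\scrC}_{\leqslant0}$ satisfies $F_n\simeq F_{n+1}$ for $n\geqslant0$ (both lie in $\scrC_{\leqslant0}\subseteq\scrC_{\leqslant n}$, so the transition map, being $\tau_{\leqslant n}$, is the identity), whence $F\simeq\lp\tau_{\leqslant n}F_0\rp_n=\varphi\lp F_0\rp$; and it is fully faithful because a coconnective $Y\in\scrC_{\leqslant0}$ is complete ($\lim_n\tau_{\leqslant n}Y\simeq Y$), so $\Map_{\widehat{\scrC}}\lp\varphi X,\hsp\varphi Y\rp\simeq\lim_n\Map_{\scrC}\lp X,\hsp\tau_{\leqslant n}Y\rp\simeq\Map_{\scrC}\lp X,\hsp Y\rp$ compatibly with $\varphi$. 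Once part (1) is secured nothing here is hard; as flagged above, the real work throughout is organizing the shift-versus-truncation bookkeeping consistently.
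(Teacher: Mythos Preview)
The paper does not supply its own proof of this proposition: it is quoted verbatim from \cite[Proposition~1.2.1.17]{ha} and used as a black box, so there is no in-paper argument to compare against. That said, your proposal is correct and follows the same route as Lurie's original proof --- identifying $\widehat{\scrC}$ with the \infinity-category of compatible towers, computing (co)limits and shifts levelwise in the $\scrC_{\leqslant n}$, and reading off both stability and the $t$-structure from the levelwise Postnikov decomposition. The only point worth tightening is the computation of mapping spaces in part~(3): once you use that $\tau_{\leqslant n}$ is left adjoint to the (full) inclusion $\scrC_{\leqslant n}\subseteq\scrC$ to rewrite $\Map_{\scrC}(\tau_{\leqslant n}X,\tau_{\leqslant n}Y)\simeq\Map_{\scrC}(X,\tau_{\leqslant n}Y)$, the limit over $n$ collapses for $Y\in\scrC_{\leqslant 0}$ because the tower $\lp\tau_{\leqslant n}Y\rp_n$ is eventually constant (equal to $Y$ for $n\geqslant 0$), and cofinality then gives $\lim_n\tau_{\leqslant n}Y\simeq Y$ without any further ``completeness'' hypothesis on $Y$.
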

\begin{propositionn}
	The stable \infinity-category $\Modmixgr_{\Bbbk}$ is left complete with respect to the mixed graded Postnikov $t$-structure of Theorem \ref{thm:mixedgradedtstructure}. 
\end{propositionn}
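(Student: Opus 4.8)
The plan is to apply the left-completeness criterion of \cite[Proposition $1.2.1.19$]{ha}: a stable \infinity-category endowed with a $t$-structure and admitting countable products is left complete if and only if its full sub-\infinity-category $\bigcap_{n\in\ZZ}\scrC_{\geqslant n}$ of infinitely connective objects is trivial. Since $\Modmixgr_{\Bbbk}$ is presentable it admits all small limits, hence in particular countable products, so the entire argument reduces to showing that this intersection consists only of zero objects.

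First I would rephrase infinite connectivity weight-wise. The suspension \infinity-functor and each weight-component \infinity-functor $(-)_q\colon\Modmixgr_{\Bbbk}\to\Mod_{\Bbbk}$ are exact, since colimits and finite limits are computed weight-wise by \cref{lemma:colimitsinmixgr}; combined with the description of the connective objects in \cref{thm:mixedgradedtstructure} and a shift, this shows that $M_{\bullet}\in(\Modmixgr_{\Bbbk})_{\geqslant n}$ exactly when $M_q$ is $(n-q)$-connective in $\Mod_{\Bbbk}$ for every integer $q$. Therefore $M_{\bullet}$ lies in $\bigcap_{n}(\Modmixgr_{\Bbbk})_{\geqslant n}$ if and only if each $M_q$ is infinitely connective, i.e.\ $\pi_j M_q\cong 0$ for all $j$. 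Since $\Bbbk$ is discrete (indeed a $\QQ$-algebra), $\Mod_{\Bbbk}$ is the usual unbounded derived \infinity-category of $\Bbbk$, in which $\Bbbk$ is a compact generator; hence an object with vanishing homotopy groups is a zero object, so $M_q\simeq 0$ for every $q$. Finally, a mixed graded module all of whose weight components vanish is itself zero, because the family $\{(-)_q\}_{q\in\ZZ}$ is jointly conservative --- for instance because $\oblv_{\varepsilon}$ of \ref{functor:forgetful} is conservative, being the forgetful \infinity-functor out of a category of comodules, and $\Modgr_{\Bbbk}\simeq\prod_{q\in\ZZ}\Mod_{\Bbbk}$. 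This proves the intersection is trivial, and left completeness follows.

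I do not expect a genuine obstacle here; the two points that call for care are keeping the indexing of the left-completion tower straight --- so that the ``kernel'' of the comparison \infinity-functor from $\Modmixgr_{\Bbbk}$ to its left completion is identified with the \emph{infinitely connective} objects rather than the infinitely coconnective ones --- and the reduction to $\Mod_{\Bbbk}$, which genuinely uses that the base ring is connective (this is exactly where the discreteness hypothesis enters, and it is precisely the feature that fails in the non-connective generality mentioned earlier). If one prefers to avoid invoking \cite[Proposition $1.2.1.19$]{ha}, an alternative is to use \cref{cor:truncationisgood} together with \cref{lemma:colimitsinmixgr}: the weight-$q$ component of the mixed graded Postnikov tower $\{\tau^{\mixgr}_{\leqslant n}M_{\bullet}\}_{n}$ is the ordinary Postnikov tower $\{\tau_{\leqslant n-q}M_q\}_{n}$, whose limit in $\Mod_{\Bbbk}$ is $M_q$ by left completeness of $\Mod_{\Bbbk}$; as limits are computed weight-wise, the canonical map $M_{\bullet}\to\lim_{n}\tau^{\mixgr}_{\leqslant n}M_{\bullet}$ is a weight-wise equivalence, hence an equivalence, which gives full faithfulness of the comparison \infinity-functor --- one then still has to check essential surjectivity, for which the abstract criterion remains the cleanest route.
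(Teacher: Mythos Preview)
Your approach is the same as the paper's --- both invoke \cite[Proposition $1.2.1.19$]{ha} --- but you have misstated the criterion and thereby skipped one of its hypotheses. The proposition requires not only that $\scrC$ admit countable products, but also that $\scrC_{\geqslant 0}$ be \emph{closed} under countable products; only under both assumptions is left completeness equivalent to the triviality of $\bigcap_n\scrC_{\geqslant n}$. You verified the existence of countable products and the triviality of the infinitely connective part, but never checked that a countable product of connective mixed graded modules is again connective. This is where the paper spends its effort: since products are computed weight-wise (\cref{lemma:colimitsinmixgr}), a countable product of objects that are $(-q)$-connective in weight $q$ has weight-$q$ component a countable product of $(-q)$-connective $\Bbbk$-modules, and one then needs that such a product remains $(-q)$-connective in $\Mod_{\Bbbk}$ --- which holds because the standard $t$-structure on $\Mod_{\Bbbk}$ is left complete (equivalently, satisfies the same closure property). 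Once you add this sentence your argument is complete, and in fact your verification that infinitely connective objects vanish is more explicit than what the paper writes.

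Your alternative route via \cref{cor:truncationisgood} is sound for full faithfulness of the comparison map, but as you correctly note, essential surjectivity still needs an argument, so it does not avoid the criterion.
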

\begin{proof}
	In order to prove our claim, we use the following criterion.
	\begin{propositionn}[{\cite[Proposition $1.2.1.19$]{ha}}]
		\label{prop:leftcompletecriterion}
		Let $\scrC$ a stable $\infinity$-category equipped with a $t$-structure. Suppose that $\scrC$ has all countable products, and $\scrC_{\geqslant 0}$ is closed under countable products. Then $\scrC$ is left exact with respect to its $t$-structure if and only if $$\scrC_{\geqslant\scriptstyle\infty}\coloneqq\bigcap_{n\in\ZZ}\scrC_{\geqslant n}$$consists only of zero objects of $\scrC$.
	\end{propositionn}
	Obviously $\Modmixgr_{\Bbbk}$, being both complete and cocomplete, admits all countable products. Since all limits and colimits are computed weight-wise, a product of countably many mixed graded $\Bbbk$-modules $\left\{M_{\bullet}^{\alpha}\right\}_{\alpha\in\NN}$, all of which are connective for the mixed graded Postnikov $t$-structure, is again connective. Indeed, in each weight we have an equivalence of $\Bbbk$-modules $$\lp\prod_{\alpha\in\NN}M^{\alpha}_{\bullet}\rp_{\!\!q}\simeq\prod_{\alpha\in\NN}M^{\alpha}_q$$where all the $M^{\alpha}_q$'s are now $(-q)$-connective, hence the statement follows from the fact that $\Mod_{\Bbbk}$ is left complete by \cite[Proposition $7.1.1.13$]{ha}.
\end{proof}
\begin{parag}
	On the converse, recall that the $\infinity$-category $\Modfil_{\Bbbk}$ endowed with the Beilinson $t$-structure of Theorem \ref{thm:beilinsontstructure} is \textit{not left complete} (\cite[Section $5$]{BMS}). In fact, the objects lying in $\lp\Modfil_{\Bbbk}\rp_{\geqslant\scriptstyle\infty}$ are those filtered ${\Bbbk}$-modules $M_{\bullet}$ such that ${\Gr_n}{ M_{\bullet}}$ vanishes for all integers $n$, i.e. filtered ${\Bbbk}$-modules corresponding to essentially constant diagrams $ \ZZ_{\geqslant}\longrightarrow \Mod_{\Bbbk}$. In particular, for any non trivial $\Bbbk$-module $M$ the constant diagram on it is \infinity-connective without being $0$.
\end{parag}
Yet, we can characterize what the left completion of the Beilinson $t$-structure on filtered $\Bbbk$-modules looks like: this is \textit{precisely} $\Modfilcomp_{\Bbbk}$ (which justifies, \textit{a posteriori}, the choice of notation for such sub-\infinity-category). While this seems to be known to experts, to our knowledge there is no proof of this fact available in the existing literature.
\begin{propositionn}
	\label{prop:leftcompletionBeilinson}
	The full sub-\infinity-category $\Modfilcomp_{\Bbbk}$ of $\Modfil_{\Bbbk}$ is endowed with a $t$-structure which is provided by the restriction of the Beilinson $t$-structure to complete filtered $\Bbbk$-modules. Moreover, the completion \infinity-functor$$\widehat{(-)}\colon\Modfil_{\Bbbk}\longrightarrow\Modfilcomp_{\Bbbk}$$is a $t$-exact \infinity-functor that naturally identifies $\Modfilcomp_{\Bbbk}$ with the left completion of the Beilinson $t$-structure on all filtered $\Bbbk$-modules.
\end{propositionn}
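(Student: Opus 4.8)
The plan is to equip $\Modfilcomp_{\Bbbk}$ with the evident candidate $t$-structure — $(\Modfilcomp_{\Bbbk})_{\geqslant 0}:=\Modfilcomp_{\Bbbk}\cap(\Modfil_{\Bbbk})_{\geqslant 0}$ and $(\Modfilcomp_{\Bbbk})_{\leqslant 0}:=(\Modfil_{\Bbbk})_{\leqslant 0}$ — and the first (and crucial) thing to establish is that this second subcategory genuinely lands inside $\Modfilcomp_{\Bbbk}$: every Beilinson-coconnective filtered module is automatically complete. To see this I would note that if $M_\bullet$ has $M_p$ being $(-p)$-coconnective for all $p$, then for each fixed degree $i$ the tower $(\pi_iM_p)_p$ vanishes for $p>-i$, so both it and $(\pi_{i+1}M_p)_p$ have trivial $\lim$ and $\lim^1$; the Milnor sequence then forces $\pi_iM_\infty=0$ for all $i$, i.e. $M_\infty\simeq 0$. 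The same remark yields $(\Modfil_{\Bbbk})_{\leqslant n}\subseteq\Modfilcomp_{\Bbbk}$ for every $n$, and in fact $(\Modfilcomp_{\Bbbk})_{\leqslant n}=(\Modfil_{\Bbbk})_{\leqslant n}$.

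Granting this, the $t$-structure axioms on $\Modfilcomp_{\Bbbk}$ are quick: the $\Hom$-orthogonality is inherited from $\Modfil_{\Bbbk}$ since $\Modfilcomp_{\Bbbk}$ is full; shift-closure is immediate because $\Modfilcomp_{\Bbbk}$ is stable (\cref{prop:completearestable}) and both pieces of the Beilinson $t$-structure are shift-closed; and for the truncation triangle of $M_\bullet\in\Modfilcomp_{\Bbbk}$ I would take the Beilinson truncation triangle $\tau_{\geqslant 0}M_\bullet\to M_\bullet\to\tau_{\leqslant-1}M_\bullet$, observe that its third term is Beilinson-coconnective hence complete by the paragraph above, and conclude that its first term — a fibre of a morphism between complete filtered modules — is complete as well, $\Modfilcomp_{\Bbbk}$ being a stable reflective subcategory of $\Modfil_{\Bbbk}$ (closed under limits, in particular fibres). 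This proves the first assertion.

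Next I would check that $\widehat{(-)}\colon\Modfil_{\Bbbk}\to\Modfilcomp_{\Bbbk}$ is $t$-exact: it is right $t$-exact because completion leaves graded pieces unchanged (\cref{lemma:scholze2}), so it preserves the condition ``$\Gr_p$ is $(-p)$-connective'', and left $t$-exact because on $(\Modfil_{\Bbbk})_{\leqslant 0}\subseteq\Modfilcomp_{\Bbbk}$ the reflection unit is already an equivalence, so $\widehat{(-)}$ restricts to the identity there. I would then prove $\Modfilcomp_{\Bbbk}$ is left complete via \cref{prop:leftcompletecriterion}: it has all countable products (computed as in $\Modfil_{\Bbbk}$) and $(\Modfilcomp_{\Bbbk})_{\geqslant 0}$ is closed under them since $\Gr_p$ commutes with products (\cref{lemma:scholze}) and products are exact in $\Mod_{\Bbbk}$; and $\bigcap_n(\Modfilcomp_{\Bbbk})_{\geqslant n}=\Modfilcomp_{\Bbbk}\cap\bigcap_n(\Modfil_{\Bbbk})_{\geqslant n}$ is $\Modfilcomp_{\Bbbk}$ intersected with the $\infty$-connective objects of the Beilinson $t$-structure, i.e. with the essentially constant towers (as recalled above); and an essentially constant tower with value $M$ has $M_\infty\simeq M$, so it is complete only when $M\simeq 0$.

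Finally, to identify $\Modfilcomp_{\Bbbk}$ with the left completion: since $\widehat{(-)}$ is $t$-exact it commutes with the functors $\tau_{\leqslant n}$ and so induces a functor $\widehat{\Modfil_{\Bbbk}}\to\widehat{\Modfilcomp_{\Bbbk}}$ between left completions, which on the $n$-th stage is the identity $(\Modfil_{\Bbbk})_{\leqslant n}=(\Modfilcomp_{\Bbbk})_{\leqslant n}$, hence an equivalence; composing with the canonical equivalence $\Modfilcomp_{\Bbbk}\simeq\widehat{\Modfilcomp_{\Bbbk}}$ from left completeness and unwinding \cref{prop:leftcompletions}(2)--(3) shows that $\widehat{(-)}$ exhibits $\Modfilcomp_{\Bbbk}$, with the $t$-structure above, as the left completion of $(\Modfil_{\Bbbk},\text{Beilinson})$, $t$-structures and all. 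The main obstacle is the very first step — that Beilinson-coconnective filtered modules are automatically complete; once that is in hand, the construction of the $t$-structure on $\Modfilcomp_{\Bbbk}$, the $t$-exactness of $\widehat{(-)}$ and the comparison with the left completion are all formal, the only point requiring care being the bookkeeping of $t$-structures (not just underlying $\infinity$-categories) through \cref{prop:leftcompletions}.
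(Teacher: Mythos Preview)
Your proposal is correct and follows essentially the same architecture as the paper's proof, which likewise breaks into: (i) eventually Beilinson-coconnective implies complete (\cref{lemma:coconnectiveimpliescomplete}), (ii) the Beilinson $t$-structure restricts to $\Modfilcomp_{\Bbbk}$ (\cref{lemma:completeBeilinsonrestricts}), (iii) the restriction is left complete via \cref{prop:leftcompletecriterion} (\cref{lemma:completeBeilinsonleftcomplete}), and (iv) the identification with the left completion through the equivalence $\widehat{\Fun}(\ZZ_{\leqslant},\Modfilcomp_{\Bbbk})\simeq\widehat{\Fun}(\ZZ_{\leqslant},\Modfil_{\Bbbk})$ (\cref{lemma:completeBeilinsonleftcompletion}). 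The only substantive difference is in step (i): you invoke the Milnor $\lim{}^1$ sequence on the towers $(\pi_iM_p)_p$, whereas the paper observes directly that $M_{\infty}$, as a limit of $(n-p)$-coconnective modules, must lie in $\bigcap_q(\Mod_{\Bbbk})_{\leqslant n-q}=0$; both arguments are valid and of comparable length.
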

For the sake of clarity, we shall split the proof of \cref{prop:leftcompletionBeilinson} in several lemmas.
	\begin{lemman}
	\label{lemma:completeBeilinsonrestricts}
	The Beilinson $t$-structure of \cref{thm:beilinsontstructure} on filtered $\Bbbk$-modules restricts to a $t$-structure on the full sub-\infinity-category of complete filtered $\Bbbk$-modules.
	\end{lemman}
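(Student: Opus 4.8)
The plan is to realize $\Modfilcomp_{\Bbbk}$ — which is a stable full subcategory of $\Modfil_{\Bbbk}$ by \cref{prop:completearestable} — as a subcategory closed under the truncation functors of the Beilinson $t$-structure, so that the latter restricts to it. The one substantive input will be the observation that \emph{every Beilinson-coconnective filtered $\Bbbk$-module is already complete}, that is,
\[
\lp\Modfil_{\Bbbk}\rp_{\leqslant 0}\subseteq\Modfilcomp_{\Bbbk}.
\]
Granting this, everything else is formal. Put $\lp\Modfilcomp_{\Bbbk}\rp_{\geqslant 0}\coloneqq\Modfilcomp_{\Bbbk}\cap\lp\Modfil_{\Bbbk}\rp_{\geqslant 0}$ and $\lp\Modfilcomp_{\Bbbk}\rp_{\leqslant 0}\coloneqq\lp\Modfil_{\Bbbk}\rp_{\leqslant 0}$. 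Stability under the shifts $[1]$ and $[-1]$ is inherited from the Beilinson $t$-structure together with the stability of $\Modfilcomp_{\Bbbk}$; the orthogonality $\Map\lp X,\hsp Y[-1]\rp\simeq 0$ for $X$ connective and $Y$ coconnective passes through the fully faithful inclusion $\Modfilcomp_{\Bbbk}\hookrightarrow\Modfil_{\Bbbk}$; and for a given $Z_{\bullet}\in\Modfilcomp_{\Bbbk}$ I would simply take the Beilinson truncation fibre sequence $\tau^{\mathrm{Beil}}_{\geqslant 0}Z_{\bullet}\to Z_{\bullet}\to\tau^{\mathrm{Beil}}_{\leqslant -1}Z_{\bullet}$ computed in $\Modfil_{\Bbbk}$: the cofibre lies in $\lp\Modfil_{\Bbbk}\rp_{\leqslant -1}\subseteq\lp\Modfil_{\Bbbk}\rp_{\leqslant 0}\subseteq\Modfilcomp_{\Bbbk}$ by the key claim, while the fibre is a fibre of complete filtered modules, hence complete — since $(-)_{\scriptstyle\infty}$ preserves fibres (\cref{lemma:scholze}), it sends this fibre to $\fib(0\to 0)\simeq 0$. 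This exhibits the required truncations inside $\Modfilcomp_{\Bbbk}$.

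To prove the key claim I would unwind \cref{thm:beilinsontstructure}: for $M_{\bullet}\in\lp\Modfil_{\Bbbk}\rp_{\leqslant 0}$ each $M_p$ is $(-p)$-coconnective, and I must show $M_{\scriptstyle\infty}=\lim_{\ZZ_{\geqslant}}M_{\bullet}\simeq 0$. Writing $\ZZ_{\geqslant}$ as the filtered union of its sub-posets $\{n\geqslant N\}\cong\NN^{\op}$ gives $M_{\scriptstyle\infty}\simeq\lim_{N}\lim_{n\geqslant N}M_n$, so it is enough to see that each tower limit $\lim_{n\geqslant N}M_n$ vanishes; this I would check on homotopy groups via the Milnor exact sequence, noting that for each fixed $j$ the tower $\lp\pi_j M_n\rp_{n\geqslant N}$ is eventually zero (because $\pi_j M_n=0$ as soon as $n>-j$), so its $\lim$ and ${\lim}^1$ both vanish; hence $\lim_{n\geqslant N}M_n$ has trivial homotopy and is zero, the Postnikov $t$-structure on $\Mod_{\Bbbk}$ being conservative.

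I expect the only delicate point to be precisely this computation of the bi-infinite inverse limit $M_{\scriptstyle\infty}$: one must be careful that the truncated sub-towers $\{n\geqslant N\}$ are \emph{not} cofinal in $\ZZ_{\geqslant}$ — so the outer limit $\lim_N$ genuinely matters — and that an eventually-vanishing tower of $\Bbbk$-modules has vanishing ${\lim}^1$ as well as $\lim$. Beyond that, everything reduces to bookkeeping with \cref{lemma:scholze}, \cref{prop:completearestable}, and the formal theory of $t$-structures from \cite{ha}. It will also be worth recording, as an immediate by-product of the key claim, that the inclusion identifies the coconnective parts, $\lp\Modfil_{\Bbbk}\rp_{\leqslant 0}=\lp\Modfilcomp_{\Bbbk}\rp_{\leqslant 0}$; this is exactly the input needed to make the completion $\infinity$-functor $t$-exact in the proposition that follows.
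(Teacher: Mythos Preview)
Your proof is correct and follows the same strategy as the paper, which also isolates the key claim ``Beilinson-coconnective implies complete'' as a separate lemma (\cref{lemma:coconnectiveimpliescomplete}) and then deduces that both Beilinson truncations of a complete object stay complete. One small correction: contrary to your stated caution, the inclusion $\{n\geqslant N\}\hookrightarrow\ZZ_{\geqslant}$ \emph{is} initial for limits (the comma category over any $m$ is the cofiltered poset $\{n\geqslant\max(N,m)\}$, hence weakly contractible), so already $M_{\scriptstyle\infty}\simeq\lim_{n\geqslant N}M_n$ for any fixed $N$ and your outer $\lim_N$ is superfluous---this does not invalidate your Milnor argument, only streamlines it, and indeed the paper exploits exactly this observation to argue more directly that $M_{\scriptstyle\infty}\in\bigcap_q(\Mod_{\Bbbk})_{\leqslant -q}=0$.
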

In order to prove \cref{lemma:completeBeilinsonrestricts}, we shall need the following important technical result.
\begin{lemman}
	\label{lemma:coconnectiveimpliescomplete}
	If $M_{\bullet}$ is a filtered $\Bbbk$-module which is eventually coconnective (that is, $n$-coconnective for some $n$) for the Beilinson $t$-structure of \cref{thm:beilinsontstructure}, then $M_{\bullet}$ is complete.
\end{lemman}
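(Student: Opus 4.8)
The plan is to reduce the completeness of $M_\bullet$ to a statement about the standard $t$-structure on $\Mod_\Bbbk$, exploiting that coconnective subcategories are stable under limits and that the standard $t$-structure on $\Mod_\Bbbk$ is right separated.

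First I would unwind the hypothesis. By \cref{thm:beilinsontstructure}, saying that $M_\bullet$ is $n$-coconnective for the Beilinson $t$-structure, i.e. $M_\bullet[-n]\in\lp\Modfil_\Bbbk\rp_{\leqslant 0}$, means exactly that for every integer $p$ the $\Bbbk$-module $M_p[-n]$ is $(-p)$-coconnective; equivalently, $M_p$ is $(n-p)$-coconnective, that is $M_p\in\lp\Mod_\Bbbk\rp_{\leqslant n-p}$, for all $p\in\ZZ$. The only feature of this that matters is that the bound $n-p$ tends to $-\infty$ as $p\to+\infty$.

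Next I would compute $M_\infty$ along a cofinal tail of the filtration. By \cref{def:completefiltered} and \ref{functor:limitfiltration}, $M_\infty\simeq\lim_{\ZZ_\geqslant}M_\bullet$. For every integer $p$, the inclusion $\ZZ_{\geqslant p}\hookrightarrow\ZZ_\geqslant$ is cofinal for the purpose of limits (each relevant slice category admits a terminal object), so that $M_\infty\simeq\lim_{q\geqslant p}M_q$. Now fix an arbitrary integer $m$ and choose $p\geqslant n-m$; then $M_q\in\lp\Mod_\Bbbk\rp_{\leqslant n-q}\subseteq\lp\Mod_\Bbbk\rp_{\leqslant m}$ for every $q\geqslant p$. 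Since $\lp\Mod_\Bbbk\rp_{\leqslant m}$ is a reflective, hence limit-closed, full sub-\infinity-category of $\Mod_\Bbbk$, the limit $M_\infty$ again lies in $\lp\Mod_\Bbbk\rp_{\leqslant m}$. As $m$ was arbitrary, $M_\infty\in\bigcap_{m\in\ZZ}\lp\Mod_\Bbbk\rp_{\leqslant m}$, which consists only of the zero object (any object of $\Mod_\Bbbk$ all of whose homotopy groups vanish is zero), whence $M_\infty\simeq 0$ and $M_\bullet$ is complete.

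I do not expect a real obstacle here: the argument is entirely formal once the hypothesis is translated correctly. The two points deserving a little care are getting the direction of the homological shift in the Beilinson $t$-structure right, and justifying that the tails $\ZZ_{\geqslant p}$ are cofinal in $\ZZ_\geqslant$ for computing limits; both are routine.
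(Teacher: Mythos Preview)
Your proof is correct and follows essentially the same route as the paper: both translate the Beilinson coconnectivity hypothesis into $M_p\in\lp\Mod_\Bbbk\rp_{\leqslant n-p}$, use closure of coconnective subcategories under limits to place $M_\infty$ in $\bigcap_m\lp\Mod_\Bbbk\rp_{\leqslant m}$, and conclude that $M_\infty\simeq 0$. Your version is slightly more explicit about the cofinality of the tails $\ZZ_{\geqslant p}\subseteq\ZZ_\geqslant$, which the paper leaves implicit.
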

\begin{proof}
	Since for any integer $p$, the $\Bbbk$-module $M_p$ is $(n-p)$-coconnective for the usual $t$-structure on $\Bbbk$-modules by assumption, and coconnective objects are stable under all limits, we just need to observe that $$M_{\infty}\coloneqq\lim_{q\in\ZZ_{\geqslant}}{M_q}$$ must be $n-q$-coconnective for each integer $q$, i.e., it must belong to the \infinity-category$$\lp\Mod_{\Bbbk}\rp_{\leqslant-\infty}\coloneqq\bigcap_{q\in\ZZ}\lp\Mod_{\Bbbk}\rp_{\leqslant -q}.$$But the Postnikov $t$-structure on $\Bbbk$-modules is left complete, hence $M_{\infty}$ must be trivial because of \cref{prop:leftcompletecriterion}.
\end{proof}
\begin{proof}[Proof of \cref{lemma:completeBeilinsonrestricts}]
Since $\Modfilcomp_{\Bbbk}$ is a stable full sub-\infinity-category of $\Modfil_{\Bbbk}$ and its inclusion admits a right adjoint, it is closed under all limits and finite colimits, and in particular under all loops and suspensions. Moreover, if $M_{\bullet}$ is a complete filtered $\Bbbk$-module which is connective for the Beilinson $t$-structure, it is obviously left orthogonal to any complete filtered $\Bbbk$-module which is $(-1)$-connective for the Beilinson $t$-structure, since $\Modfilcomp_{\Bbbk}$ is a \textit{full} sub-\infinity-category of $\Modfil_{\Bbbk}$.  So, we only need to check is that if $M_{\bullet}$ is endowed with a complete filtration, then both $\taufil_{\leqslant n}M_{\bullet}$ and $\taufil_{\geqslant n}M_{\bullet}$ lie in $\Modfilcomp_{\Bbbk}$, where $\taufil_{\geqslant n}$ and $\taufil_{\leqslant n}$ denote the $n$-connective and $n$-coconnective cover \infinity-functor for the Beilinson $t$-structure on $\Modfil_{\Bbbk}$, respectively. This is clear for $\taufil_{\leqslant n}$, since $n$-coconnective objects are always complete in virtue of \cref{lemma:coconnectiveimpliescomplete}, and this implies that the same holds also for $\tau_{\geqslant n}$: indeed, consider the left adjoint to the inclusion $\Modfilcomp_{\Bbbk}\subseteq \Modfil_{\Bbbk}$ of \cref{lemma:scholze2}. For any filtered module $M_{\bullet}$, the unit map of the adjunction yields a map of fiber sequences (given by the unit map of the adjunction)$$\begin{tikzpicture}[scale=0.75]
\node (a1) at (-4,2){$\taufil_{\geqslant n+1}M_{\bullet}$};
\node (a2) at (0,2){$M_{\bullet}$};
\node (a3) at (4,2){$\taufil_{\leqslant n}M_{\bullet}$};
\node (b1) at (-4,0){$\widehat{\taufil_{\geqslant n+1}M_{\bullet}}$};
\node (b2) at (0,0){$\widehat{M_{\bullet}}$};
\node (b3) at (4,0){$\widehat{\taufil_{\leqslant n}M_{\bullet}}.$};
\draw[->,font=\scriptsize](a1) to node[above]{} (b1);
\draw[->,font=\scriptsize](a2) to node[above]{} (b2);
\draw[->,font=\scriptsize](a3) to node[above]{} (b3);
\draw[->,font=\scriptsize](a1) to node[above]{} (a2);
\draw[->,font=\scriptsize](a2) to node[above]{} (a3);
\draw[->,font=\scriptsize](b1) to node[above]{} (b2);
\draw[->,font=\scriptsize](b2) to node[above]{} (b3);
\end{tikzpicture}$$If $M_{\bullet}$ is complete, then both the second and third vertical arrows are equivalences, and so the first must be an equivalence as well. It follows that both the connective and coconnective cover \infinity-functors restrict naturally to the stable full sub-\infinity-category $\Modfilcomp_{\Bbbk}$, which implies our assertion.
\end{proof}
\begin{lemman}
	\label{lemma:completeBeilinsonleftcomplete}
The restriction of the Beilinson $t$-structure to the \infinity-category of complete filtered $\Bbbk$-modules is a left complete $t$-structure.
\end{lemman}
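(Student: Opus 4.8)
The plan is to apply the criterion of \cref{prop:leftcompletecriterion} to the stable $\infinity$-category $\Modfilcomp_{\Bbbk}$ (stable by \cref{prop:completearestable}) equipped with the restricted Beilinson $t$-structure produced in \cref{lemma:completeBeilinsonrestricts}. First I would dispatch the two standing hypotheses of that criterion. Countable products exist in $\Modfilcomp_{\Bbbk}$, since its inclusion into $\Modfil_{\Bbbk}$ admits a left adjoint (\cref{lemma:scholze}) and hence preserves all limits, while $\Modfil_{\Bbbk}$ is complete. For the closure of $\lp\Modfilcomp_{\Bbbk}\rp_{\geqslant0}$ under countable products, the point to keep in mind is that connectivity for the Beilinson $t$-structure is measured on the \emph{graded pieces} rather than on the weight components; but by item (3) of \cref{lemma:scholze} the $\infinity$-functor $\Gr_p$ commutes with all limits on $\Modfilcomp_{\Bbbk}$, so for a countable family $\left\{M^{\alpha}_{\bullet}\right\}_{\alpha\in\NN}$ of connective objects one gets $\Gr_p\lp\prod_{\alpha}M^{\alpha}_{\bullet}\rp\simeq\prod_{\alpha}\Gr_pM^{\alpha}_{\bullet}$, which is $(-p)$-connective because connective objects of $\Mod_{\Bbbk}$ are closed under products. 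This is the exact analogue of the argument used above to show that $\Modmixgr_{\Bbbk}$ is left complete, with weight components replaced by graded pieces.

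The substance of the proof is the verification that $$\lp\Modfilcomp_{\Bbbk}\rp_{\geqslant\scriptstyle\infty}\coloneqq\bigcap_{n\in\ZZ}\lp\Modfilcomp_{\Bbbk}\rp_{\geqslant n}$$ consists only of zero objects. Here I would take an object $M_{\bullet}$ of this intersection. By the construction in \cref{lemma:completeBeilinsonrestricts} the $n$-connective part of the restricted $t$-structure is $\Modfilcomp_{\Bbbk}\cap\lp\Modfil_{\Bbbk}\rp_{\geqslant n}$, so $M_{\bullet}$ is complete and, by the description of \cref{thm:beilinsontstructure}, the $\Bbbk$-module $\Gr_pM_{\bullet}$ is $(n-p)$-connective for every pair of integers $n,p$. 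Letting $n$ vary, each graded piece $\Gr_pM_{\bullet}$ therefore lies in $\bigcap_{m\in\ZZ}\lp\Mod_{\Bbbk}\rp_{\geqslant m}$, which is trivial because the Postnikov $t$-structure on $\Mod_{\Bbbk}$ is left complete (\cite[Proposition $7.1.1.13$]{ha}). Thus $\Gr_pM_{\bullet}\simeq0$ for all $p$, and since the family $\left\{\Gr_p\right\}_{p\in\ZZ}$ is jointly conservative on $\Modfilcomp_{\Bbbk}$ (item (1) of \cref{lemma:scholze}), we conclude $M_{\bullet}\simeq0$. Applying \cref{prop:leftcompletecriterion} then gives the left completeness of the restricted Beilinson $t$-structure, which is the assertion.

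I do not anticipate a genuine obstacle here: given \cref{lemma:completeBeilinsonrestricts}, the only thing one really has to be careful about is running the criterion with $\Gr_p$ in place of the naive weight functors $(-)_p$, which is legitimate precisely because on complete filtered modules the graded pieces form a conservative family and commute with the limits involved. Everything else is a formal consequence of results already established, together with the left completeness of $\Mod_{\Bbbk}$.
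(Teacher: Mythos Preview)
Your proof is correct and follows essentially the same approach as the paper: both invoke the criterion of \cref{prop:leftcompletecriterion} and then show that an $\infty$-connective complete filtered module must be zero. The only cosmetic difference is that the paper phrases the last step as ``$\infty$-connective in $\Modfil_{\Bbbk}$ means constant filtration, and a constant filtration that is complete is zero,'' whereas you unpack this as ``each $\Gr_p$ is $\infty$-connective in $\Mod_{\Bbbk}$, hence zero, and then joint conservativity of $\{\Gr_p\}$ on $\Modfilcomp_{\Bbbk}$ finishes''; these are the same argument, and your version is if anything more explicit about where left completeness of $\Mod_{\Bbbk}$ enters.
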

\begin{proof}
Since $\Modfilcomp_{\Bbbk}$ is closed under all limits existing in $\Modfil_{\Bbbk}$, the hypotheses of the left completeness criterion of \cref{prop:leftcompletecriterion} are satisfied, so we can simply check what the \infinity-connective objects of $\Modfilcomp_{\Bbbk}$ are: they are \infinity-connective objects of $\Modfil_{\Bbbk}$ (hence, constant filtrations) which are also complete as filtered $\Bbbk$-modules. But it is immediate to see that the only constant filtration which is complete is the constant filtration on the trivial $\Bbbk$-module $0$. Thus, $\Modfilcomp_{\Bbbk}$ is left complete.
\end{proof}
\begin{lemman}
\label{lemma:completeBeilinsonleftcompletion}
The left completion $\infinity$-functor $$\widehat{(-)}\colon\Modfil_{\Bbbk}\longrightarrow\Modfilcomp_{\Bbbk},$$identifies $\Modfilcomp_{\Bbbk}$, endowed with the restriction of the Beilinson $t$-structure, with the left completion of the Beilinson $t$-structure on $\Modfil_{\Bbbk}$.
\end{lemman}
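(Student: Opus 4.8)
The conceptual heart of the argument is the observation, immediate from \cref{lemma:coconnectiveimpliescomplete} together with \cref{lemma:completeBeilinsonrestricts}, that the coconnective parts of the two $t$-structures agree on the nose:
$$\lp\Modfil_{\Bbbk}\rp_{\leqslant n}=\lp\Modfilcomp_{\Bbbk}\rp_{\leqslant n}\qquad\text{for every }n\in\ZZ,$$
since any filtered $\Bbbk$-module which is $n$-coconnective for the Beilinson $t$-structure is already complete, and the truncation $\taufil_{\leqslant n}$ on $\Modfilcomp_{\Bbbk}$ therefore coincides with the $n$-coconnective cover computed in $\Modfil_{\Bbbk}$. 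The plan is to turn this, together with the left completeness of $\Modfilcomp_{\Bbbk}$ established in \cref{lemma:completeBeilinsonleftcomplete}, into a direct presentation of $\Modfilcomp_{\Bbbk}$ as the left completion of $\Modfil_{\Bbbk}$.

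First I would record that $\widehat{(-)}\colon\Modfil_{\Bbbk}\to\Modfilcomp_{\Bbbk}$ is $t$-exact. It sends connective objects to connective objects because the completion \infinity-functor does not alter the graded pieces (\cref{lemma:scholze2}) and connectivity for the Beilinson $t$-structure is detected on the $\Gr_p$'s; it sends coconnective objects to coconnective objects because, by \cref{lemma:coconnectiveimpliescomplete}, any such object is already complete, hence fixed by $\widehat{(-)}$. Preserving both $\lp\Modfil_{\Bbbk}\rp_{\geqslant0}$ and $\lp\Modfil_{\Bbbk}\rp_{\leqslant0}$, the \infinity-functor $\widehat{(-)}$ is exact and $t$-exact.

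Next I would assemble the equivalence. Using the description of the left completion as a limit (\cref{prop:leftcompletions}) and the identification of coconnective parts above, one has a chain
$$\Modfilcomp_{\Bbbk}\overset{\simeq}{\longrightarrow}\lim_{n}\lp\Modfilcomp_{\Bbbk}\rp_{\leqslant n}=\lim_{n}\lp\Modfil_{\Bbbk}\rp_{\leqslant n}=\widehat{\Modfil_{\Bbbk}},$$
in which the first arrow, $N_{\bullet}\mapsto\left\{\taufil_{\leqslant n}N_{\bullet}\right\}_{n}$, is the canonical $t$-exact \infinity-functor realizing $\Modfilcomp_{\Bbbk}$ as its own left completion and is an equivalence precisely because $\Modfilcomp_{\Bbbk}$ is left complete (\cref{lemma:completeBeilinsonleftcomplete}), while the two equalities use the agreement of coconnective parts and of their truncation maps. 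This composite is moreover $t$-exact: a tower in the common \infinity-category factors through $\lp\Modfilcomp_{\Bbbk}\rp_{\geqslant0}$ if and only if it factors through $\lp\Modfil_{\Bbbk}\rp_{\geqslant0}$, as each of its terms is eventually coconnective and hence automatically complete, so the $t$-structures of \cref{prop:leftcompletions}(2) on the two sides coincide and are transported to the restricted Beilinson $t$-structure on $\Modfilcomp_{\Bbbk}$.

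Finally I would check that this equivalence intertwines $\widehat{(-)}$ with the canonical \infinity-functor $\Modfil_{\Bbbk}\to\widehat{\Modfil_{\Bbbk}}$. The latter sends $M_{\bullet}$ to the tower $\left\{\taufil_{\leqslant n}M_{\bullet}\right\}_{n}$, which under the chain above corresponds to the unique complete filtered $\Bbbk$-module $N_{\bullet}$ with $\taufil_{\leqslant n}N_{\bullet}\simeq\taufil_{\leqslant n}M_{\bullet}$ for all $n$. But $t$-exactness of $\widehat{(-)}$ gives $\taufil_{\leqslant n}\widehat{M}_{\bullet}\simeq\widehat{\taufil_{\leqslant n}M_{\bullet}}\simeq\taufil_{\leqslant n}M_{\bullet}$, the last equivalence because $\taufil_{\leqslant n}M_{\bullet}$ is eventually coconnective, hence complete by \cref{lemma:coconnectiveimpliescomplete}; since $\Modfilcomp_{\Bbbk}$ is left complete these identifications force $N_{\bullet}\simeq\widehat{M}_{\bullet}$. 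Hence the composite is naturally equivalent to $\widehat{(-)}$, which is therefore the left completion \infinity-functor, as asserted. The substance of the proof is entirely contained in \cref{lemma:coconnectiveimpliescomplete,lemma:completeBeilinsonleftcomplete}; the one genuinely delicate point is keeping careful track of which $t$-structure is being used at each stage.
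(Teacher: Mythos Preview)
Your proof is correct and follows essentially the same approach as the paper: both exploit that the coconnective parts of the Beilinson $t$-structure on $\Modfil_{\Bbbk}$ and its restriction to $\Modfilcomp_{\Bbbk}$ coincide (via \cref{lemma:coconnectiveimpliescomplete}), then combine this with the left completeness of $\Modfilcomp_{\Bbbk}$ (\cref{lemma:completeBeilinsonleftcomplete}) and the functor-category/limit model of the left completion from \cite[Proposition $1.2.1.17$]{ha} to produce the desired $t$-exact equivalence, finally checking that it intertwines the canonical map with the completion functor. The only cosmetic difference is that the paper works with the explicit $\widehat{\Fun}{\lp\ZZ_{\leqslant},\hsp-\rp}$ model throughout, whereas you phrase the same chain directly as a limit $\lim_n\lp\Modfil_{\Bbbk}\rp_{\leqslant n}$.
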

\begin{proof}
In virtue of \cref{lemma:completeBeilinsonleftcomplete}, the restriction of the Beilinson $t$-structure on complete filtered $\Bbbk$-modules is a left complete $t$-structure. This means, in virtue of \cite[Proposition $1.2.1.17$]{ha}, that the natural \infinity-functor from the \infinity-category $\Modfilcomp_{\Bbbk}$ towards its left completion is an equivalence. As already described in \cref{prop:leftcompletions}, we can identify the left completion of the Beilinson $t$-structure on $\Modfilcomp_{\Bbbk}$ with the full sub-\infinity-category $\widehat{\Fun}{\lp\ZZ_{\leqslant},\hsp\Modfilcomp_{\Bbbk}\rp}\subseteq\Fun{\lp\ZZ_{\leqslant},\hsp\Modfilcomp_{\Bbbk}\rp}$ spanned by those \infinity-functors $F\colon\ZZ_{\leqslant}\to\Modfilcomp_{\Bbbk}$ such that\begin{itemize}
	\item $F_n\in\lp\Modfilcomp_{\Bbbk}\rp_{\leqslant -n}$ for any integer $n$.
	\item For any $m\leqslant n$, the map $F_m\to F_n$ induces an equivalence $\taufil_{\leqslant-n}F_m\overset{\simeq}{\to} F_n$.
\end{itemize} 
Employing this canonical model, the canonical equivalence $\Modfilcomp_{\Bbbk}\to\widehat{\Fun}{\lp\ZZ_{\leqslant},\hsp\Modfilcomp_{\Bbbk}\rp}$ simply sends an object $M_{\bullet}$ to the tower of its truncations $\left\{ \taufil_{\leqslant-n}M_{\bullet}\right\}_{n\in\ZZ}$, while its inverse takes the limit of a functor over $\ZZ_{\leqslant}$. So, we have a diagram
$$\begin{tikzpicture}[scale=0.75]
\node (a) at (-2,2){$\Modfilcomp_{\Bbbk}$};
\node (b) at (-2,0){$\Modfil_{\Bbbk}$};
\node (c) at (2,2){$\widehat{\Fun}{\lp\ZZ_{\leqslant},\hsp\Modfilcomp_{\Bbbk}\rp}$};
\node (d) at (2,0){$\widehat{\Fun}{\lp\ZZ_{\leqslant},\hsp\Modfil_{\Bbbk}\rp}$};
\draw[->,font=\scriptsize] (a) to node[above]{$\simeq$} (c);
\draw[right hook->, font=\scriptsize] (a) to node[left]{}(b);
\draw[->,font=\scriptsize] (c) to node[right]{}(d);
\draw[->,font=\scriptsize] (b) to node[below]{} (d);
\end{tikzpicture}$$where the vertical arrow on the right is simply the post-composition with the natural inclusion. Since the coconnective truncation \infinity-functors for filtered modules and complete filtered modules are the same, this diagram commutes; moreover, the functor $$\widehat{\Fun}{\lp\ZZ_{\leqslant},\hsp\Modfilcomp_{\Bbbk}\rp}\longrightarrow\widehat{\Fun}{\lp\ZZ_{\leqslant},\hsp\Modfil_{\Bbbk}\rp}$$is an equivalence as well, because a functor $F\colon\ZZ_{\leqslant}\to\Modfil_{\Bbbk}$ lies in $\widehat{\Fun}{\lp\ZZ,\hsp\Modfil_{\Bbbk}\rp}$ if and only if $F_n$ is $(-n)$-coconnective for all $n$. In particular, using again \cref{lemma:coconnectiveimpliescomplete}, it follows that at least as mere stable \infinity-categories, $\Modfilcomp_{\Bbbk}\simeq\widehat{\Fun}{\lp\ZZ_{\leqslant},\hsp\Modfil_{\Bbbk}\rp}.$\\The last thing we need to prove is that this equivalence is $t$-exact. Recall (\cite[Proposition $1.2.1.17.(2)$]{ha}) that the natural $t$-structure on $\widehat{\Fun}{\lp\ZZ_{\leqslant},\hsp\Modfil_{\Bbbk}\rp}$ is defined as follows: the connective objects are those \infinity-functors $F\colon\ZZ_{\leqslant}\to\Modfil_{\Bbbk}$ that factor through $\lp\Modfil_{\Bbbk}\rp_{\geqslant0}$, while coconnective objects are those \infinity-functors that factor through $\lp\Modfil_{\Bbbk}\rp_{\leqslant0}$; an analogous statement holds for $\widehat{\Fun}{\lp\ZZ_{\leqslant},\hsp\Modfilcomp_{\Bbbk}\rp}$. But, being complete with respect to its $t$-structure, the canonical \infinity-functor $$\Modfilcomp_{\Bbbk}\longrightarrow \widehat{\Fun}{\lp\ZZ_{\leqslant},\hsp\Modfilcomp_{\Bbbk}\rp}$$is a $t$-exact equivalence, and$$\widehat{\Fun}{\lp\ZZ_{\leqslant},\hsp\Modfilcomp_{\Bbbk}\rp}\longrightarrow\widehat{\Fun}{\lp\ZZ_{\leqslant},\hsp\Modfil_{\Bbbk}\rp}$$is again $t$-exact, because by definition$$\lp\Modfilcomp_{\Bbbk}\rp_{\geqslant0}\coloneqq \Modfilcomp_{\Bbbk}\times_{\Modfil_{\Bbbk}}\lp\Modfil_{\Bbbk}\rp_{\geqslant0}$$and the dual claim holds also for $\lp\Modfilcomp_{\Bbbk}\rp_{\leqslant0}.$ In particular, $\widehat{\Fun}{\lp\ZZ_{\leqslant},\hsp\Modfilcomp_{\Bbbk}\rp}\to\widehat{\Fun}{\lp\ZZ_{\leqslant},\hsp\Modfil_{\Bbbk}\rp}$ sends connective objects to connective objects, and coconnective objects to coconnective objects.\\
Now, since every complete filtered $\Bbbk$-module is canonically equivalent to the limit over the tower of its coconnective truncations, it is clear that the composition$$\Modfil_{\Bbbk}\longrightarrow\widehat{\Fun}{\lp\ZZ_{\leqslant},\hsp\Modfil_{\Bbbk}\rp}\simeq\widehat{\Fun}{\lp\ZZ_{\leqslant},\hsp\Modfilcomp_{\Bbbk}\rp}\simeq \Modfilcomp_{\Bbbk}$$agrees with the left adjoint to$$\Modfilcomp_{\Bbbk}\simeq\widehat{\Fun}{\lp\ZZ_{\leqslant},\hsp\Modfilcomp_{\Bbbk}\rp}\simeq\widehat{\Fun}{\lp\ZZ_{\leqslant},\hsp\Modfil_{\Bbbk}\rp}\longrightarrow\Modfil_{\Bbbk}$$which is canonically equivalent to the inclusion $\Modfilcomp_{\Bbbk}\subseteq\Modfil_{\Bbbk}$. Hence $\widehat{(-)}\colon\Modfil_{\Bbbk}\to\Modfilcomp_{\Bbbk}$ does exhibit the \infinity-category of complete filtered $\Bbbk$-modules as the left completion of the Beilinson $t$-structure on all filtered $\Bbbk$-modules.
\end{proof}
\subsection{Main theorem}
\label{subsec:mainthm}
We can now state the main result of this section, and of this paper as well.
\begin{theorem}
	\label{thm:leftcompletion}
	Let $\Modfilcomp_{\Bbbk}$ denote the full sub-\infinity-category of $\Modfil_{\Bbbk}$ spanned by $\Bbbk$-modules with complete filtration. There exists a fully faithful embedding$$(-)^{\fil}\colon\Modmixgr_{\Bbbk}\longhookrightarrow\Modfil_{\Bbbk}$$which is $t$-exact with respect to both $t$-structures, and which restricts to a $t$-exact equivalence$$\Modmixgr_{\Bbbk}\overset{\simeq}{\longrightarrow}\Modfilcomp_{\Bbbk}.$$
 Moreover, such embedding admits a $t$-exact left adjoint $$(-)^{\mixgr}\colon\Modfil_{\Bbbk}\longrightarrow\Modmixgr_{\Bbbk}$$ which makes the following diagram of \infinity-functors commute.
	\begin{displaymath}
	\begin{tikzpicture}[scale=0.75]
	\node(a) at (-2,2.5){$\Modfil_{\Bbbk}$};
	\node(b) at (2,2.5){$\Modfilcomp_{\Bbbk}$};
	\node (c) at (0,0.5){$\Modmixgr_{\Bbbk}$};
	\draw[->,font=\scriptsize] (a) to node[above]{$\widehat{(-)}$} (b);
	\draw[->,font=\scriptsize] (a) to[bend right] node[left]{$\lp-\rp^{\mixgr}$} (c);
	\draw[->,font=\scriptsize] (b) to[bend left] node[right]{$(-)^{\mixgr}$} (c);
	\end{tikzpicture}
	\end{displaymath}
In particular, the \infinity-functor $(-)^{\mixgr}\colon\Modfil_{\Bbbk}\to\Modmixgr_{\Bbbk}$ exhibits the \infinity-category $\Modmixgr_{\Bbbk}$, endowed with the mixed graded Postnikov $t$-structure of \cref{thm:mixedgradedtstructure}, as the left completion of $\Modfil_{\Bbbk}$ with respect to the Beilinson $t$-structure of \cref{thm:beilinsontstructure}.
	%
	%
\end{theorem}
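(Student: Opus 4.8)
The plan is to deduce the theorem from the results already in hand — that $\Modmixgr_{\Bbbk}$ is left complete for the mixed graded Postnikov $t$-structure, and that the completion $\infinity$-functor exhibits $\Modfilcomp_{\Bbbk}$ as the left completion of the Beilinson $t$-structure (\cref{prop:leftcompletionBeilinson}) — together with the already known equivalence between mixed graded and complete filtered modules; the one genuinely new ingredient is that this equivalence is $t$-exact. First I would recall from \cite[Proposition $1.3.1$]{derivedfoliations2} and \cite[Proposition $2.27$]{calaque2021lie} the fully faithful $\infinity$-functor $(-)^{\fil}\colon\Modmixgr_{\Bbbk}\longhookrightarrow\Modfil_{\Bbbk}$ with essential image $\Modfilcomp_{\Bbbk}$, and write $\Phi\colon\Modmixgr_{\Bbbk}\overset{\simeq}{\to}\Modfilcomp_{\Bbbk}$ for the induced equivalence, so that $(-)^{\fil}$ is $\Phi$ followed by the inclusion; fully faithfulness is then immediate. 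The structural feature I will rely on is that $\Phi$ is compatible with passing to the underlying graded object, i.e. that there is a canonical equivalence $\Gr_{\bullet}\circ\Phi\simeq\oblv_{\varepsilon}$ of $\infinity$-functors $\Modmixgr_{\Bbbk}\to\Modgr_{\Bbbk}$ — concretely, the mixed differential on $\oblv_{\varepsilon}M_{\bullet}$ is the décalage datum carried by the filtration $(M_{\bullet})^{\fil}$.

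The core of the argument is the $t$-exactness of $\Phi$, which I would prove by a conservativity trick. The $\infinity$-functor $\Gr_{\bullet}\colon\Modfilcomp_{\Bbbk}\to\Modgr_{\Bbbk}$ is $t$-exact — it is $t$-exact on all of $\Modfil_{\Bbbk}$ by the description of the Beilinson $t$-structure (see the remark after \cref{thm:beilinsontstructure}), and the inclusion $\Modfilcomp_{\Bbbk}\subseteq\Modfil_{\Bbbk}$ is $t$-exact because the $t$-structure on the source is by \cref{prop:leftcompletionBeilinson} the restriction of the one on the target — and it is conservative by \cref{lemma:scholze}. On the mixed graded side, $\oblv_{\varepsilon}\colon\Modmixgr_{\Bbbk}\to\Modgr_{\Bbbk}$ is $t$-exact for the mixed graded Postnikov $t$-structure and the weight-wise Postnikov $t$-structure on graded modules, directly from the characterization of connective and coconnective objects in \cref{thm:mixedgradedtstructure}. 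Then, for $M_{\bullet}$ connective in $\Modmixgr_{\Bbbk}$, applying the $t$-exact functor $\Gr_{\bullet}$ to $\taufil_{\leqslant-1}\Phi(M_{\bullet})$ gives
\[
\Gr_{\bullet}\lp\taufil_{\leqslant-1}\Phi(M_{\bullet})\rp\simeq\tau^{\gr}_{\leqslant-1}\oblv_{\varepsilon}M_{\bullet}\simeq\oblv_{\varepsilon}\tau^{\mixgr}_{\leqslant-1}M_{\bullet}\simeq0,
\]
so $\taufil_{\leqslant-1}\Phi(M_{\bullet})\simeq0$ by conservativity, i.e. $\Phi(M_{\bullet})$ is connective; the same computation with $\taufil_{\geqslant1}$ in place of $\taufil_{\leqslant-1}$ shows that $\Phi$ preserves coconnective objects. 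Hence $\Phi$ is a $t$-exact equivalence — and so is $\Phi^{-1}$ — and composing with the ($t$-exact) inclusion shows $(-)^{\fil}$ is $t$-exact for the mixed graded Postnikov and Beilinson $t$-structures.

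It then remains to assemble the rest. I would set $(-)^{\mixgr}\coloneqq\Phi^{-1}\circ\widehat{(-)}\colon\Modfil_{\Bbbk}\to\Modmixgr_{\Bbbk}$: this is left adjoint to $(-)^{\fil}$ since left adjoints compose ($\widehat{(-)}\dashv$ inclusion by \cref{lemma:scholze2}, and $\Phi^{-1}\dashv\Phi$), it is $t$-exact because $\widehat{(-)}$ is $t$-exact (\cref{prop:leftcompletionBeilinson}) and $\Phi^{-1}$ is $t$-exact by the previous step, and the triangle of the statement commutes by construction because $\widehat{(-)}$ restricts to the identity on the reflective subcategory $\Modfilcomp_{\Bbbk}$. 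Since $\widehat{(-)}$ exhibits $\Modfilcomp_{\Bbbk}$ as the left completion of the Beilinson $t$-structure (\cref{lemma:completeBeilinsonleftcompletion}) and $\Phi^{-1}$ is a $t$-exact equivalence, the composite $(-)^{\mixgr}$ exhibits $\Modmixgr_{\Bbbk}$, with its mixed graded Postnikov $t$-structure, as that left completion. Finally, passing to hearts and using \cref{prop:leftcompletions}, the $t$-exact equivalences give $\lp\Modmixgr_{\Bbbk}\rp^{\heartsuit}\simeq\lp\Modfilcomp_{\Bbbk}\rp^{\heartsuit}\simeq\lp\Modfil_{\Bbbk}\rp^{\heartsuit}$, which for discrete $\Bbbk$ is equivalent to $\dgMod_{\Bbbk}$ by \cref{thm:beilinsontstructure}; this also settles the postponed claim on the heart in \cref{thm:mixedgradedtstructure}.

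The main obstacle — indeed the only step that is not purely formal — is establishing the compatibility $\Gr_{\bullet}\circ\Phi\simeq\oblv_{\varepsilon}$ from the explicit construction of the embedding, and checking that the normalizations of the two $t$-structures match with no stray shifts; once that is in place, everything else is bookkeeping with \cref{thm:mixedgradedtstructure}, \cref{prop:leftcompletionBeilinson}, \cref{lemma:scholze} and the cited fully faithful embedding.
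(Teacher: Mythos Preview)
Your proposal is correct and takes a genuinely different route from the paper. The paper reconstructs the embedding $(-)^{\fil}$ from scratch (via naive truncation and Tate realization), proves its $t$-exactness by directly computing that $\Gr_p M_{\bullet}^{\fil}\simeq M_{-p}[-2p]$, produces the left adjoint $(-)^{\mixgr}$ by the Adjoint Functor Theorem, computes its weight components explicitly (\cref{prop:gradedweightequivalence}), and then proves fully faithfulness and the essential image characterization by direct inspection of the unit and counit. You instead import the equivalence $\Phi$ from \cite{derivedfoliations2,calaque2021lie} and prove $t$-exactness by the conservativity trick with $\Gr_{\bullet}$. Your route is shorter and more conceptual --- it cleanly isolates the one new content (compatibility of $t$-structures) --- while the paper's approach is self-contained and yields the explicit description $(M_{\bullet})^{\mixgr}_p\simeq\Gr_{-p}M_{\bullet}[-2p]$ as a byproduct, which is used elsewhere (e.g.\ for the monoidality statement in \cref{corollary:mixedgradedismonoidal}).

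One small correction: the compatibility you invoke is not literally $\Gr_{\bullet}\circ\Phi\simeq\oblv_{\varepsilon}$; the paper's computation gives $\Gr_p\Phi(M_{\bullet})\simeq M_{-p}[-2p]$, so there is a sign-flip and a shift. You flag this at the end (``no stray shifts''), and indeed the twist $N_{\bullet}\mapsto\{N_{-p}[-2p]\}_{p}$ is a $t$-exact autoequivalence of $\Modgr_{\Bbbk}$ for the weight-wise Postnikov $t$-structure (if $N_{-p}$ is $p$-connective then $N_{-p}[-2p]$ is $(-p)$-connective), so your conservativity argument goes through unchanged once this twist is inserted. Just make the twist explicit when you write it up.
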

Even if the $\infinity$-functors that exhibit $\Modmixgr_{\Bbbk}$ as the left completion of $\Modfil_{\Bbbk}$ are precisely the same as the ones considered in \cite{derivedfoliations2}, for the sake of clarity we recall all the details of their construction in this subsection.
\begin{parag}
	\label{parag:filteredconstruction}
	Consider the naive truncation $\infinity$-functor of mixed graded $\Bbbk$-modules of Definition \ref{def:naivetruncationindegleqp}. Thanks to \cref{remark:inclusionstruncationsmixedgraded}, for any mixed graded $\Bbbk$-module $M_{\bullet}$ and for any triple of integers $r\leqslant q \leqslant p$ we have natural morphisms in $\Modmixgr_{\Bbbk}$ $$\sigma_{\leqslant r}M_{\bullet}\longhookrightarrow\sigma_{\leqslant q}M_{\bullet}\longhookrightarrow\sigma_{\leqslant p}M_{\bullet}.$$
	This means that gathering all the naive truncations $\infinity$-functors $\{\sigma_{\leqslant p}\}_{p\in\ZZ}$ we get an $\infinity$-functor$$\Modmixgr_{\Bbbk}\xrightarrow{\left\{\sigma_{\leqslant p}\right\}_p}{\Fun}{\lp\ZZ_{\leqslant},\hsp\Modmixgr_{\Bbbk}\rp}.$$Using the isomorphism $\ZZ_{\leqslant}\cong \ZZ_{\geqslant}$ given by changing signs, we can write the $\infinity$-functor above as$$\sigma\coloneqq\left\{\sigma_{\leqslant -p}\right\}_{p\in\ZZ}\colon\Modmixgr_{\Bbbk}\longrightarrow{\Fun}{\lp\ZZ_{\geqslant},\hsp\Modmixgr_{\Bbbk}\rp}.$$
	Post-composing $\sigma$ with the Tate realization $\infinity$-functor of \cref{def:taterealization}, we land in the \infinity-category ${\Fun}{\lp\ZZ_{\geqslant},\hsp \Mod_{\Bbbk}\rp}\eqqcolon\Modfil_{\Bbbk}$.
\end{parag}
\begin{defn}
	\label{def:filteredassociated}
	The $\infinity$-functor$$(-)^{\fil}\colon\Modmixgr_{\Bbbk}\overset{\sigma}{\longrightarrow}{\Fun}{\lp\ZZ_{\geqslant},\hsp\Modmixgr_{\Bbbk}\rp}\xrightarrow{|-|^{\operatorname{t}}\circ-}\Modfil_{\Bbbk}$$is the \textit{associated filtered $\infinity$-functor}.
\end{defn}
\begin{remark}
	\label{remark:keyremarkfiltration}
	Given a mixed graded $\Bbbk$-module $M_{\bullet}$, its associated filtered $\Bbbk$-module is given in weight $p$ by the Tate realization of its truncation in weights $\leqslant -p$, i.e. $\left|\sigma_{\leqslant -p}M_{\bullet}\right|^{\operatorname{t}}$ and all the morphisms are the inclusions exhibited in \ref{parag:filteredconstruction}. Let us study in greater detail the $\Bbbk$-module $\left|\sigma_{\leqslant -p}M_{\bullet}\right|^{\operatorname{t}}$: for any $p$, this mixed graded $\Bbbk$-module $\sigma_{\leqslant -p}M_{\bullet}$ is bounded above (i.e., its weight components are $0$ for all $q>-p$), so its Tate realization is equivalent to
	\begin{displaymath}
	\underset{i\leqslant 0}{\colim}\hsp{\Map_{\Modmixgr_{\Bbbk}}}{\lp \Bbbk(-i)[2i],\hsp \sigma_{\leqslant -p}M_{\bullet}\rp}\simeq \begin{cases}
	{\Map_{\Modmixgr_{\Bbbk}}}{\lp \Bbbk(-p)[2p],\hsp \sigma_{\leqslant -p}M_{\bullet}\rp} &\text{if }p < 0\\
	{\Map_{\Modmixgr_{\Bbbk}}}{\lp \Bbbk(0),\hsp \sigma_{\leqslant -p}M_{\bullet}\rp}&\text{if }p\geqslant 0.
	\end{cases}
	\end{displaymath} 
	Either way, in terms of explicit models of chain complexes, this means that $$\lp M^{\fil}_{\bullet}\rp_p\simeq\prod_{q\geqslant p}M_{-q}[-2q]$$where the right hand side is endowed with the total differential. The transition maps, in terms of the above equivalence, are simply given by the obvious inclusions$$\dots\longhookrightarrow\prod_{q\geqslant p+1}M_{-q}[-2q]\longhookrightarrow\prod_{q\geqslant p}M_{-q}[-2q]\longhookrightarrow\prod_{q\geqslant p-1}M_{-q}[-2q]\longhookrightarrow\dots$$
\end{remark}
\begin{notation}
	In the following, we shall often denote the object $\lp M_{\bullet}\rp^{\fil}$ with the suggestive notation $\prod_{q \geqslant p}M_{-q}[-2q]$, leaving implicit the fact that this is not actually a \textit{product} of $\Bbbk$-modules.
\end{notation}
We prove some important properties of the associated filtered $\infinity$-functor.
\begin{propositionn}
	The associated filtered $\infinity$-functor of \cref{def:filteredassociated} is a $t$-exact $\infinity$-functor between stable $\infinity$-categories with $t$-structure. 
\end{propositionn}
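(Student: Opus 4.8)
The plan is to prove $t$-exactness by checking separately that $(-)^{\fil}$ preserves connectivity and coconnectivity, using the explicit weight-wise description of the functor from \cref{remark:keyremarkfiltration} together with the explicit characterizations of the two $t$-structures from \cref{thm:mixedgradedtstructure} and \cref{thm:beilinsontstructure}. Recall that for a mixed graded $\Bbbk$-module $M_{\bullet}$ one has $\lp M^{\fil}_{\bullet}\rp_p\simeq\prod_{q\geqslant p}M_{-q}[-2q]$ with the transition maps given by the obvious inclusions; hence the graded piece $\Gr_p M^{\fil}_{\bullet}$ is the cofiber of $\prod_{q\geqslant p+1}M_{-q}[-2q]\hookrightarrow\prod_{q\geqslant p}M_{-q}[-2q]$, which is $M_{-p}[-2p]$.

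\emph{Coconnectivity.} Suppose $M_{\bullet}$ lies in $\lp\Modmixgr_{\Bbbk}\rp_{\leqslant 0}$, i.e.\ $M_q$ is $(-q)$-coconnective for all $q$. I need $\lp M^{\fil}_{\bullet}\rp_p$ to be $(-p)$-coconnective for all $p$. But $\lp M^{\fil}_{\bullet}\rp_p\simeq\prod_{q\geqslant p}M_{-q}[-2q]$, and each factor $M_{-q}[-2q]$ is $(q-2q)=(-q)$-coconnective; since $q\geqslant p$ we have $-q\leqslant -p$, so each factor is $(-p)$-coconnective, and coconnective objects are closed under products. Hence $\lp M^{\fil}_{\bullet}\rp_p$ is $(-p)$-coconnective, as required.

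\emph{Connectivity.} Suppose $M_{\bullet}$ lies in $\lp\Modmixgr_{\Bbbk}\rp_{\geqslant 0}$, i.e.\ $M_q$ is $(-q)$-connective for all $q$. I need $\Gr_p M^{\fil}_{\bullet}$ to be $(-p)$-connective for all $p$. By the computation above, $\Gr_p M^{\fil}_{\bullet}\simeq M_{-p}[-2p]$, which is $(p-2p)=(-p)$-connective exactly because $M_{-p}$ is $p$-connective (apply the hypothesis with $q=-p$). So the connectivity half follows directly from the description of the graded pieces. Since the essential image of $(-)^{\fil}$ lands in $\Modfilcomp_{\Bbbk}$ (indeed $\lim_p\prod_{q\geqslant p}M_{-q}[-2q]\simeq 0$), and the restriction of the Beilinson $t$-structure to complete filtered modules is the one inherited as a full sub-$\infinity$-category by \cref{lemma:completeBeilinsonrestricts}, the argument simultaneously proves $t$-exactness for the equivalence onto $\Modfilcomp_{\Bbbk}$.

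\textbf{Main obstacle.} The only genuinely delicate point is making sure that the abstract model-independent definition of $(-)^{\fil}$ really is computed by the product formula $\prod_{q\geqslant p}M_{-q}[-2q]$ with \emph{inclusion} transition maps, and in particular that $\Gr_p$ of this tower is $M_{-p}[-2p]$ on the nose rather than something assembled from a nontrivial limit--colimit interchange. This is exactly the content of \cref{remark:keyremarkfiltration}, which I would invoke; the subtlety there is that the Tate realization of the bounded-above module $\sigma_{\leqslant -p}M_{\bullet}$ collapses to an ordinary product (no colimit is needed), so that the transition maps in the filtered object are literally the split inclusions of products. Granting that identification, everything else is the elementary bookkeeping carried out above: connectivity/coconnectivity of $M_q[-2q]$ in terms of that of $M_q$, closure of coconnective objects under products, and the fact that $\Gr_p$ of a tower of inclusions of products picks out the single missing factor.
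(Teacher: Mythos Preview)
Your proof is correct and follows essentially the same approach as the paper: both arguments invoke the explicit product model of \cref{remark:keyremarkfiltration} to identify $\Gr_p M_{\bullet}^{\fil}\simeq M_{-p}[-2p]$ for the connectivity half, and use closure of coconnective objects under products for the coconnectivity half. The only cosmetic differences are that the paper treats general $n$-connectivity/coconnectivity (while you specialize to $n=0$, which suffices) and handles connectivity before coconnectivity; your additional remark about the essential image landing in $\Modfilcomp_{\Bbbk}$ is correct but not needed for the statement at hand.
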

\begin{proof}
	\cref{remark:keyremarkfiltration} allows us to describe neatly the graded pieces of the filtered $\Bbbk$-module $M_{\bullet}^{\fil}$ for any mixed graded $\Bbbk$-module $M_{\bullet}$. In fact, the cofiber sequence$$\lp M_{\bullet}^{\fil}\rp_{p+1}\longrightarrow\lp M_{\bullet}^{\fil}\rp_{p}\longrightarrow\Gr_pM_{\bullet}^{\fil}$$can be now represented by explicit models as
	$$\prod_{q\geqslant p+1}M_{-q}[-2q]\longhookrightarrow\prod_{q\geqslant p}M_{-q}[-2q]\longrightarrow\Gr_pM_{\bullet}^{\fil},$$where again the product has to be thought as endowed with the total differential, and by direct inspection of the long exact sequence of homotopy groups we get that $\Gr_pM_{\bullet}^{\fil}\simeq M_{-p}[-2p]$.\\So, let us assume that $M_{\bullet}$ was an $n$-connective mixed graded $\Bbbk$-module for the $t$-structure of \cref{thm:mixedgradedtstructure}: then, for any integer $p$, $M_p$ was an $(n-p)$-connective $\Bbbk$-module. This implies that $\Gr_p M_{\bullet}^{\fil}\simeq M_{-p}[-2p]$ is an $(n-p)$-connective $\Bbbk$-module as well, so $(-)^{\fil}$ preserves connective objects.\\
	On the converse, let $M_{\bullet}$ be an $n$-coconnective mixed graded $\Bbbk$-module. Then $\lp M_{\bullet}^{\fil}\rp_p$ is equivalent to a product of $\Bbbk$-modules sitting in $\lp\Mod_{\Bbbk}\rp_{\leqslant n-p}$, and since $n$-coconnective objects are stable under limits this yields that $\lp M_{\bullet}^{\fil}\rp_p$ is $(n-p)$-coconnective for any integer $n$. So $(-)^{\fil}$ preserves both connective and coconnective objects, hence it is $t$-exact.
\end{proof}
To prove \cref{thm:leftcompletion}, we have to show that $\Modmixgr_{\Bbbk}$ is a localization of $\Modfil_{\Bbbk}$, i.e. the $\infinity$-functor $(-)^{\fil}$ is a fully faithful right adjoint whose essential image consists of complete objects.
\begin{propositionn}
	\label{prop:associatedmixedgraded}
	The $\infinity$-functor $(-)^{\fil}\colon\Modmixgr_{\Bbbk}\to\Modfil_{\Bbbk}$ admits a left adjoint.
\end{propositionn}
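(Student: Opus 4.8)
The plan is to apply the Adjoint Functor Theorem. Both $\Modmixgr_{\Bbbk}$ and $\Modfil_{\Bbbk}$ are presentable $\infinity$-categories, so by \cite[Corollary $5.5.2.9$]{htt} it suffices to show that $(-)^{\fil}$ preserves all small limits and is accessible. Since $\Modfil_{\Bbbk}={\Fun}{\lp\ZZ_{\geqslant},\hsp\Mod_{\Bbbk}\rp}$ is a functor $\infinity$-category with values in $\Mod_{\Bbbk}$, both limits and filtered colimits are computed pointwise — hence weight-wise — in it (\cref{lemma:scholze}); hence it is enough to check that, for every integer $p$, the weight-$p$ component $\infinity$-functor $\lp-\rp^{\fil}_p\colon\Modmixgr_{\Bbbk}\to\Mod_{\Bbbk}$ preserves limits and is accessible.

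The crux is to rewrite $\lp-\rp^{\fil}_p$ as a composite of limit-preserving accessible $\infinity$-functors. By \cref{def:filteredassociated} one has $\lp M_{\bullet}\rp^{\fil}_p\simeq\left|\sigma_{\leqslant -p}M_{\bullet}\right|^{\operatorname{t}}$, and the naive truncation $\sigma_{\leqslant -p}$ lands in the full sub-$\infinity$-category $\varepsilon\operatorname{-}\Mod^{{\operatorname{gr},\leqslant -p}}_{\Bbbk}$ of mixed graded $\Bbbk$-modules concentrated in weights $\leqslant -p$. On that sub-$\infinity$-category, \cref{remark:keyremarkfiltration} shows that the filtered colimit defining the Tate realization stabilizes, so that $\left|-\right|^{\operatorname{t}}$ restricts to the corepresentable $\infinity$-functor ${\Map_{\Modmixgr_{\Bbbk}}}{\lp C_p,\hsp-\rp}$, where $C_p\coloneqq{\Bbbk}(-p)[2p]$ if $p<0$ and $C_p\coloneqq{\Bbbk}(0)$ if $p\geqslant 0$. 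Consequently $\lp-\rp^{\fil}_p$ is naturally equivalent to the composite
\begin{displaymath}
\Modmixgr_{\Bbbk}\overset{\sigma_{\leqslant -p}}{\longrightarrow}\varepsilon\operatorname{-}\Mod^{{\operatorname{gr},\leqslant -p}}_{\Bbbk}\overset{{\Map_{\Modmixgr_{\Bbbk}}}{\lp C_p,\hsp-\rp}}{\longrightarrow}\Mod_{\Bbbk}.
\end{displaymath}
Now $\sigma_{\leqslant -p}$ is right adjoint to the inclusion $\iota_{\leqslant -p}$ (\cref{def:naivetruncationindegleqp}), hence preserves limits and is accessible; and ${\Map_{\Modmixgr_{\Bbbk}}}{\lp C_p,\hsp-\rp}$, being right adjoint to the tensoring $\infinity$-functor $-\otimes_{\Bbbk}C_p\colon\Mod_{\Bbbk}\to\Modmixgr_{\Bbbk}$ afforded by the $\Mod_{\Bbbk}$-enrichment, does as well — and its restriction along the limit- and colimit-preserving inclusion $\iota_{\leqslant -p}$ still preserves limits and is accessible. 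Therefore $\lp-\rp^{\fil}_p$ preserves limits and is accessible for every $p$, so $(-)^{\fil}$ preserves limits and is accessible, and the Adjoint Functor Theorem produces the desired left adjoint $(-)^{\mixgr}\colon\Modfil_{\Bbbk}\to\Modmixgr_{\Bbbk}$.

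I expect the only genuinely non-formal point to be the identification, on bounded-above modules, of the Tate realization with a corepresentable $\infinity$-functor — i.e.\ the stabilization of its defining colimit — but this has already been carried out in \cref{remark:keyremarkfiltration}, so no real obstacle remains; everything else is a routine application of presentability. As an alternative one could, as in \cite{derivedfoliations2}, exhibit $(-)^{\mixgr}$ explicitly as the $\infinity$-functor sending a filtered $\Bbbk$-module to its reindexed associated graded endowed with the mixed differential induced by the connecting maps of the filtration; we shall not need this description here.
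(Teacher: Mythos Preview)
Your proof is correct and follows essentially the same route as the paper: both invoke the Adjoint Functor Theorem, reduce to checking that each weight component $\lp-\rp^{\fil}_p=\left|\sigma_{\leqslant -p}(-)\right|^{\operatorname{t}}$ preserves limits and is accessible, and dispatch the naive truncation $\sigma_{\leqslant -p}$ via its adjunction with $\iota_{\leqslant -p}$. The only difference lies in the treatment of the Tate realization: the paper argues directly that $\left|-\right|^{\operatorname{t}}$ preserves limits by inspection of its defining formula and is accessible because it is given by a countable product (hence commutes with $\kappa$-filtered colimits for $\kappa>\aleph_0$), whereas you exploit the stabilization on bounded-above objects from \cref{remark:keyremarkfiltration} to identify the relevant functor with the corepresentable $\Map_{\Modmixgr_{\Bbbk}}(C_p,-)$, which is automatically a right adjoint. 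Your variant is slightly slicker in that it avoids any explicit cardinal bookkeeping, but the two arguments are otherwise interchangeable.
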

\begin{proof}
	Being $\Modmixgr_{\Bbbk}$ and $\Modfil_{\Bbbk}$ two $\infinity$-categories which are both presentable and accessible, it is sufficient to show that $(-)^{\fil}$ commutes with all limits and with $\kappa$-filtered colimits for $\kappa$ some regular cardinal. Since $\Modfil_{\Bbbk}$ is an $\infinity$- category of functors, hence limits and colimits are computed weight-wise, it suffices to show that for any integer $p$ the $\infinity$-functor $\left|\sigma_{\leqslant -p}(-)\right|^{\operatorname{t}}$ commutes with limits and is accessible, i.e., it commutes with $\kappa$-filtered colimits for some regular cardinal $\kappa$. Since such $\infinity$-functor is the composition of two \infinity-functors, one of which (namely, $\sigma_{\leqslant -p}$) commutes with limits and is accessible, being a left adjoint, we are left to prove that $\left|-\right|^{\operatorname{t}}$ commutes with limits and is accessible as well.
	\begin{itemize}
		\item The $\infinity$-functor $\left|-\right|^{\operatorname{t}}$ commutes with all limits: this can be seen via direct computations from the description of $\left|-\right|^{\operatorname{t}}$ and by recalling that limits distribute over limits.
		\item The Tate realization $\infinity$-functor is given explicitly by a countable product, so it commutes with $\kappa_2$-filtered colimits for some regular cardinal $\kappa_2$ bigger than $\aleph_0$.
	\end{itemize}
	The claim now follows, once again, from the Adjoint Functor Theorem.
\end{proof}
\cref{prop:associatedmixedgraded} allows us to introduce the following Definition.
\begin{defn}
	\label{def:associatedmixedgraded}
	The $\infinity$-functor $$(-)^{\mixgr}\colon\Modfil_{\Bbbk}\longrightarrow\Modmixgr_{\Bbbk},$$left adjoint to the $\infinity$-functor $$(-)^{\fil}\colon\Modmixgr_{\Bbbk}\to\Modfil_{\Bbbk}$$is the \textit{associated mixed graded $\infinity$-functor}.
\end{defn}
We want to study some properties of the $\infinity$-functor of \cref{def:associatedmixedgraded} as well. One of its key features is described in the following Proposition.
\begin{propositionn}
	\label{prop:gradedweightequivalence}
	Given a filtered $\Bbbk$-module $M_{\bullet}$, the $p$ weight component of its associated mixed graded $\Bbbk$-module $\lp M_{\bullet}\rp^{\mixgr}$ is naturally equivalent to $\Gr_{-p}M_{\bullet}[-2p]$.
\end{propositionn}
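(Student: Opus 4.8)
The plan is to identify the composite functor $\lp-\rp_p\circ\lp-\rp^{\mixgr}\colon\Modfil_\Bbbk\to\Mod_\Bbbk$ with $\Gr_{-p}\lp-\rp[-2p]$ by computing, for each fixed integer $p$, the $\Bbbk$-module corepresented by $\lp M_\bullet\rp^{\mixgr}_p$ and then invoking the Yoneda lemma. The first observation is that $\lp-\rp_p\colon\Modmixgr_\Bbbk\to\Mod_\Bbbk$ admits a right adjoint: by the discussion just before \cref{lemma:colimitsinmixgr} it factors as $\oblv_\varepsilon$ followed by the weight-$p$ projection $\Modgr_\Bbbk\to\Mod_\Bbbk$, the latter having $\lp-\rp(p)$ as a right adjoint (the adjunction $\lp-\rp(p)\dashv\lp-\rp_p$ on graded modules being ambidextrous), while $\oblv_\varepsilon$ has the right adjoint $\operatorname{R}_\varepsilon$ of \cref{porism:rightadjointforgetful}; hence $\lp-\rp_p$ has right adjoint $G_p\simeq\operatorname{R}_\varepsilon\circ\lp-\rp(p)$. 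Composing the adjunctions $\lp-\rp_p\dashv G_p$ and $\lp-\rp^{\mixgr}\dashv\lp-\rp^{\fil}$ of \cref{def:associatedmixedgraded} then gives, naturally in every $\Bbbk$-module $Y$,
\begin{align*}
\Map_{\Mod_\Bbbk}\lp\lp M_\bullet\rp^{\mixgr}_p,\hsp Y\rp&\simeq\Map_{\Modmixgr_\Bbbk}\lp\lp M_\bullet\rp^{\mixgr},\hsp G_pY\rp\\
&\simeq\Map_{\Modfil_\Bbbk}\lp M_\bullet,\hsp\lp G_pY\rp^{\fil}\rp.
\end{align*}
So everything reduces to making the filtered $\Bbbk$-module $\lp G_pY\rp^{\fil}$ explicit and then computing the mapping object into it.

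By the formula for $\operatorname{R}_\varepsilon$ in \cref{porism:rightadjointforgetful}, $G_pY$ is the mixed graded $\Bbbk$-module which is $Y$ in weight $p$, $Y[-1]$ in weight $p+1$, and zero in all other weights, with mixed differential the canonical equivalence $Y[-1]\xrightarrow{\sim}Y[-1]$ running from weight $p+1$ to weight $p$. Plugging this into the explicit model of \cref{remark:keyremarkfiltration}, the weight-$q$ component of $\lp G_pY\rp^{\fil}$ is the totalization of $\prod_{r\geqslant q}\lp G_pY\rp_{-r}[-2r]$; only the two slots $r=-p$ and $r=-p-1$ contribute, with values $Y[2p]$ and $Y[2p+1]$, and in the total complex they are joined by a shift of the (invertible) mixed differential of $G_pY$. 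Hence this weight-$q$ component is acyclic for $q\leqslant-p-1$ (a two-term complex whose differential is an equivalence), equals $Y[2p]$ for $q=-p$, and vanishes for $q\geqslant1-p$; in other words, $\lp G_pY\rp^{\fil}$ is the filtered $\Bbbk$-module $W'_\bullet$ which is $Y[2p]$ concentrated in weight $-p$ and zero elsewhere. (The formula $\Gr_q\lp N_\bullet\rp^{\fil}\simeq N_{-q}[-2q]$ established just above the statement provides a check of the non-zero values.)

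It remains to compute $\Map_{\Modfil_\Bbbk}\lp M_\bullet,\hsp W'_\bullet\rp$. For an integer $s$ and a $\Bbbk$-module $A$, let $\operatorname{Ran}_s(A)$ denote the filtered $\Bbbk$-module equal to $A$ in all weights $\geqslant s$ and to $0$ below, which is right adjoint to $\lp-\rp_s\colon\Modfil_\Bbbk\to\Mod_\Bbbk$; inspecting weights shows $W'_\bullet\simeq\fib\lp\operatorname{Ran}_{-p}\lp Y[2p]\rp\to\operatorname{Ran}_{1-p}\lp Y[2p]\rp\rp$ for the canonical map. Applying $\Map_{\Modfil_\Bbbk}\lp M_\bullet,-\rp$ together with the adjunction $\lp-\rp_s\dashv\operatorname{Ran}_s$ converts this fiber sequence into
\begin{align*}
\Map_{\Modfil_\Bbbk}\lp M_\bullet,\hsp W'_\bullet\rp&\simeq\fib\lp\Map_{\Mod_\Bbbk}\lp M_{-p},\hsp Y[2p]\rp\to\Map_{\Mod_\Bbbk}\lp M_{1-p},\hsp Y[2p]\rp\rp\\
&\simeq\Map_{\Mod_\Bbbk}\lp\cofib\lp M_{1-p}\to M_{-p}\rp,\hsp Y[2p]\rp,
\end{align*}
the map on the first line being precomposition with the filtration transition $M_{1-p}\to M_{-p}$. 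Since $\cofib\lp M_{1-p}\to M_{-p}\rp=\Gr_{-p}M_\bullet$, combining with the first display gives a natural (in $Y$ and in $M_\bullet$) equivalence $\Map_{\Mod_\Bbbk}\lp\lp M_\bullet\rp^{\mixgr}_p,\hsp Y\rp\simeq\Map_{\Mod_\Bbbk}\lp\Gr_{-p}M_\bullet[-2p],\hsp Y\rp$, whence $\lp M_\bullet\rp^{\mixgr}_p\simeq\Gr_{-p}M_\bullet[-2p]$ by Yoneda; naturality in $p$ is inherited from the compatible families $\{G_p\}_p$ and $\{\operatorname{Ran}_{-p}\}_p$. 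The one genuinely delicate point, I expect, is the middle step — the shift-bookkeeping behind $\lp G_pY\rp^{\fil}$ and the acyclicity of the relevant two-term total complexes; the two outer steps are a purely formal juggling of adjunctions.
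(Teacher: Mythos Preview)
Your proof is correct and follows essentially the same strategy as the paper's: both arguments identify the composite left adjoint $\oblv_{\varepsilon}\circ(-)^{\mixgr}$ (respectively its weight-$p$ component) by computing the right adjoint $(-)^{\fil}\circ\operatorname{R}_{\varepsilon}$ and invoking Yoneda, and in both cases the crux is the explicit identification of $(\operatorname{R}_{\varepsilon}N_{\bullet})^{\fil}$. The only differences are packaging: the paper works with all weights simultaneously, comparing $\Gr_{\bullet}$ and $\oblv_{\varepsilon}\circ(-)^{\mixgr}$ as functors $\Modfil_{\Bbbk}\to\Modgr_{\Bbbk}$ and citing \cite[Lemma~3.30]{enhancingfiltered} for the adjunction $\Gr_{\bullet}\dashv\operatorname{R}^{\fil}$, whereas you fix a single weight $p$ and reprove the relevant special case of that adjunction by hand via your $\operatorname{Ran}_s$ construction and the fiber sequence $W'_{\bullet}\simeq\fib\lp\operatorname{Ran}_{-p}\to\operatorname{Ran}_{1-p}\rp$. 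Your treatment of the acyclicity of the two-term total complex for $q\leqslant -p-1$ is also more explicit than the paper's one-line assertion that $\left|\sigma_{\leqslant -p}(\operatorname{R}_{\varepsilon}N_{\bullet})\right|^{\operatorname{t}}\simeq N_{-p}[-2p]$, which is helpful.
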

\begin{proof}
	Let $$\lp-\rp^{\gr}\colon\Modfil_{\Bbbk}\longrightarrow\Modgr_{\Bbbk}$$ denote the $\infinity$-functor obtained by the composition of $\lp-\rp^{\mixgr}$ with the forgetful $\infinity$-functor $\oblv_{\varepsilon}\colon\Modmixgr_{\Bbbk}\to\Modgr_{\Bbbk}$. Let \begin{align}
	\label{functor:fromfilteredtograded}
	\Gr_{\bullet}\colon\Modfil_{\Bbbk}\longrightarrow\Modgr_{\Bbbk}
	\end{align}denote the $\infinity$-functor obtained by patching the $\infinity$-functors ${\Gr_{-p}}{\lp-\rp}[-2p]\colon\Modfil_{\Bbbk}\longrightarrow\Mod_{\Bbbk}$. In particular, the $p$-th component of this $\infinity$-functor is $\Gr_{-p}\lp-\rp[-2p]$. We will prove that for any filtered ${\Bbbk}$-module $M_{\bullet}$, there is a canonical equivalence of $\Bbbk$-modules ${\Gr_{\bullet}}{ M_{\bullet}}\simeq\lp M_{\bullet}\rp^{\gr}$.\\
	We first observe that $\Gr_{\bullet}\colon \Modfil_{\Bbbk}\to\Modgr_{\Bbbk}$ is the left adjoint to the $\infinity$-functor $$\operatorname{R}^{\fil}\colon\Modgr_{\Bbbk}\longrightarrow\Modfil_{\Bbbk}$$ 
	which sends a graded ${\Bbbk}$-module $N_{\bullet}$ to the filtered ${\Bbbk}$-module whose $p$-th component is $N_{-p}[-2p]$ and whose transition maps are all zero maps (this is essentially \cite[Lemma $3.30$]{enhancingfiltered}, after an homological shift by $-2p$ and the equivalence $\Modgr_{\Bbbk}\simeq \Modgr_{\Bbbk}$ which swaps the graded parts in positive and negative degrees). In the same way, in virtue of \cref{porism:rightadjointforgetful} the $\infinity$-functor which forgets the mixed structure is the left adjoint to the $\infinity$-functor $$\operatorname{R}_{\varepsilon}\colon\Modgr_{\Bbbk}\longrightarrow\Modmixgr_{\Bbbk}.$$Now let $M_{\bullet}$ denote a filtered ${\Bbbk}$-module, and $N_{\bullet}$ denote a graded ${\Bbbk}$-module. By the adjunction $\Gr_{\bullet}\dashv \operatorname{R}^{\fil}$, we have the following equivalences of mapping spaces
	\begin{displaymath}
	{\Map_{\Modgr_{\Bbbk}}}{\lp {\Gr_{\bullet}}{ M_{\bullet}},\hsp N_{\bullet}\rp}\simeq{\Map_{\Modfil_{\Bbbk}}}{\lp M_{\bullet},\hsp {\operatorname{R}^{\fil}}{ N_{\bullet}}\rp}
	\end{displaymath}
	On the other hand, by the adjunction $\oblv_{\varepsilon}\dashv \operatorname{R}_{\varepsilon}$ one has
	\begin{align*}
	{\Map_{\Modgr_{\Bbbk}}}{\lp\lp M_{\bullet}\rp^{\operatorname{gr}},\hsp N_{\bullet}\rp}&\simeq {\Map_{\Modmixgr_{\Bbbk}}}{\lp\lp M_{\bullet}\rp^{\mixgr},\hsp {\operatorname{R}_{\varepsilon}}{ N_{\bullet}}\rp}\\
	&\!\!\!\overset{\ref{def:associatedmixedgraded}}{\simeq}{\Map_{\Modfil_{\Bbbk}}}{\lp M_{\bullet},\hsp \lp {\operatorname{R}_{\varepsilon}}{ N_{\bullet}}\rp^{\fil}\rp}.
	\end{align*}
	By the fully faithfulness of the Yoneda embedding, it is enough to check that $${\operatorname{R}^{\fil}}{ N_{\bullet}}\simeq \lp{\operatorname{R}_{\varepsilon}}{ N_{\bullet}}\rp^{\fil}$$for any graded ${\Bbbk}$-module $N_{\bullet}$. This is seen by direct inspection: in fact, $\lp{\operatorname{R}_{\varepsilon}}{ N_{\bullet}}\rp^{\fil}$ is the filtered ${\Bbbk}$-module whose $p$-th component is obtained by the Tate realization of the naive truncation $\sigma_{\leqslant -p}{\lp{\operatorname{R}_{\varepsilon}}{ N_{\bullet}}\rp}$. By the description of $\operatorname{R}_{\varepsilon}$ provided in \cref{porism:rightadjointforgetful}, $\left|\sigma_{\leqslant -p}{\lp{\operatorname{R}_{\varepsilon}}{ N_{\bullet}}\rp}\right|^{\operatorname{t}}$ is equivalent to the ${\Bbbk}$-module $N_{-p}[-2p]$. The inclusion of $\sigma_{\leqslant -p}{\lp{\operatorname{R}_{\varepsilon}}{ N_{\bullet}}\rp}$ into $\sigma_{\leqslant -p+1}{\lp{\operatorname{R}_{\varepsilon}}{ N_{\bullet}}\rp}$ induces the zero map on the Tate realizations, and so we get exactly the filtered ${\Bbbk}$-module ${\operatorname{R}^{\fil}}{ N_{\bullet}}$ we described above, which completes the proof.
\end{proof}
\begin{remark}
	Working with explicit models, the mixed differential in the mixed graded $\Bbbk$-module $\lp M_{\bullet}\rp^{\mixgr}$ is given by the boundary morphism in the cofiber sequence
	$$\Gr_{p+1}M_{\bullet}\longrightarrow{\cofib}{\lp M_{p+1}\to M_p\rp}\longrightarrow\Gr_p M_{\bullet}$$after a \textit{suitable} (i.e., correctly functorial) choice of the cofibers in each weight. 
\end{remark}
The above characterization of the weight components of the associated mixed graded $\infinity$-functor leads immediately to two important consequences.
\begin{corollaryn}
	\label{corollary:mixedgradedismonoidal}
	The $\infinity$-functor $(-)^{\mixgr}\colon\Modfil_{\Bbbk}\to\Modmixgr_{\Bbbk}$ is strongly monoidal. 
\end{corollaryn}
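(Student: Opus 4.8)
The plan is to produce an oplax symmetric monoidal structure on $(-)^{\mixgr}$ and then upgrade it to a strong one by testing against the forgetful $\infinity$-functor $\oblv_{\varepsilon}\colon\Modmixgr_{\Bbbk}\to\Modgr_{\Bbbk}$, exploiting that $\oblv_{\varepsilon}$ is \emph{strongly} monoidal (it is a quasi-coherent pullback $\infinity$-functor, see \cref{functor:forgetful}, and preserves tensor products as recalled in the proof of \cref{prop:dualizableobjects}) and \emph{conservative} (by \cref{lemma:colimitsinmixgr} cofibres in $\Modmixgr_{\Bbbk}$ are computed weight-wise, so a morphism is invertible as soon as its image under $\oblv_{\varepsilon}$ is). The computation of weights in \cref{prop:gradedweightequivalence} is what will make the comparison to the already-known strong monoidality of associated graded work.

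To obtain the oplax structure it suffices, since $(-)^{\mixgr}$ is left adjoint to $(-)^{\fil}$, to equip $(-)^{\fil}$ with a lax symmetric monoidal structure. By \cref{def:filteredassociated} we have $(-)^{\fil}=\bigl(\left|-\right|^{\operatorname{t}}\circ(-)\bigr)\circ\sigma$. The $\infinity$-functor $\sigma\colon\Modmixgr_{\Bbbk}\to\Fun\bigl(\ZZ_{\geqslant},\Modmixgr_{\Bbbk}\bigr)$ into the Day convolution category is lax monoidal because, whenever $i+j\geqslant p$, the object $\sigma_{\leqslant-i}M_{\bullet}\otimesmixgr_{\Bbbk}\sigma_{\leqslant-j}N_{\bullet}$ is supported in weights $\leqslant-(i+j)\leqslant-p$ and hence maps canonically, through $\sigma_{\leqslant-(i+j)}(M_{\bullet}\otimesmixgr_{\Bbbk}N_{\bullet})$, to $\sigma_{\leqslant-p}(M_{\bullet}\otimesmixgr_{\Bbbk}N_{\bullet})$; and the pointwise Tate realization $\left|-\right|^{\operatorname{t}}\simeq\Map_{\Modmixgr_{\Bbbk}}\bigl(\Bbbk(\infinity),-\bigr)$ is lax monoidal because $\Bbbk(\infinity)$ is an idempotent coalgebra in pro-objects, the equivalence $\Bbbk(\infinity)\otimesmixgr_{\Bbbk}\Bbbk(\infinity)\simeq\Bbbk(\infinity)$ coming from a cofinal reindexing of the defining pro-system. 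Passing to left adjoints then yields natural oplax comparison morphisms $(M_{\bullet}\otimesfil_{\Bbbk}N_{\bullet})^{\mixgr}\to(M_{\bullet})^{\mixgr}\otimesmixgr_{\Bbbk}(N_{\bullet})^{\mixgr}$ and $(\Bbbk^{\leqslant 0})^{\mixgr}\to\Bbbk(0)$.

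Next I would verify these comparison morphisms are equivalences by applying $\oblv_{\varepsilon}$. The proof of \cref{prop:gradedweightequivalence} produces a natural equivalence $\oblv_{\varepsilon}\circ(-)^{\mixgr}\simeq\Gr_{\bullet}$, where here $\Gr_{\bullet}\colon\Modfil_{\Bbbk}\to\Modgr_{\Bbbk}$ denotes the $\infinity$-functor whose $p$-th weight component is $\Gr_{-p}(-)[-2p]$. This $\infinity$-functor is strongly monoidal by \cref{lemma:scholze5}: the $p$-th weight of $\Gr_{\bullet}(M_{\bullet}\otimesfil_{\Bbbk}N_{\bullet})$ is $\bigoplus_{i+j=-p}\bigl(\Gr_{i}M_{\bullet}\otimes_{\Bbbk}\Gr_{j}N_{\bullet}\bigr)[-2p]\simeq\bigoplus_{a+b=p}\bigl(\Gr_{-a}M_{\bullet}[-2a]\bigr)\otimes_{\Bbbk}\bigl(\Gr_{-b}N_{\bullet}[-2b]\bigr)$ — the Tate twist being compatible with $\otimes$ since $-2(i+j)=-2i-2j$ — which is precisely the $p$-th weight of $\Gr_{\bullet}M_{\bullet}\otimes^{\gr}_{\Bbbk}\Gr_{\bullet}N_{\bullet}$, while $\Gr_{0}\Bbbk^{\leqslant 0}\simeq\Bbbk$ and $\Gr_{j}\Bbbk^{\leqslant 0}\simeq 0$ for $j\neq 0$. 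Since $\oblv_{\varepsilon}$ is itself strongly monoidal it carries the oplax structure maps of $(-)^{\mixgr}$ to the (invertible) structure maps of $\Gr_{\bullet}$; conservativity of $\oblv_{\varepsilon}$ then forces the former to be equivalences, i.e. $(-)^{\mixgr}$ is strongly monoidal.

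A variant routes the argument through \cref{thm:leftcompletion}: its commuting triangle identifies $(-)^{\mixgr}$ with the completion $\infinity$-functor $\widehat{(-)}\colon\Modfil_{\Bbbk}\to\Modfilcomp_{\Bbbk}$, strongly monoidal by \cref{lemma:scholze4}, followed by the equivalence $\Modfilcomp_{\Bbbk}\xrightarrow{\simeq}\Modmixgr_{\Bbbk}$ inverse to $(-)^{\fil}$; one then only needs this equivalence to be symmetric monoidal, which again follows from the conservativity argument using that $\{\Gr_{p}\}_{p}$ is jointly conservative on $\Modfilcomp_{\Bbbk}$ (\cref{lemma:scholze}), that completion preserves graded pieces (\cref{lemma:scholze2}), and the computation $\Gr_{p}\bigl((M_{\bullet})^{\fil}\bigr)\simeq M_{-p}[-2p]$ of \cref{remark:keyremarkfiltration}. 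In either form the real obstacle is structural rather than computational: pinning down an honest lax symmetric monoidal structure on $(-)^{\fil}$ (equivalently an oplax one on $(-)^{\mixgr}$) together with all of its higher coherences, so that conservativity of $\oblv_{\varepsilon}$ has something to act on; once that is in place, the invertibility of the resulting comparison maps is an immediate consequence of \cref{prop:gradedweightequivalence} and \cref{lemma:scholze5}.
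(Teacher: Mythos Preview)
Your main argument is correct and is essentially the paper's own proof: the paper likewise obtains the oplax structure on $(-)^{\mixgr}$ from a lax monoidal structure on $(-)^{\fil}$ (justified there by the coalgebra structure on $\Bbbk(\infinity)$), and then checks the comparison map is an equivalence after applying the conservative, strongly monoidal forgetful functor $\oblv_{\varepsilon}$, reducing to \cref{prop:gradedweightequivalence} and \cref{lemma:scholze5} exactly as you do. Your variant via \cref{thm:leftcompletion} is logically circular in the paper's ordering, since \cref{corollary:mixedgradedismonoidal} is established \emph{en route} to that theorem; but this does not affect your primary argument.
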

\begin{proof}
	Let us recall that the \infinity-functor $(-)^{\fil}$ is lax monoidal: it can be checked by straightforward computation, by using the fact that in each piece it is given by the mapping space out of a cocommutative coalgebra object (namely, $\Bbbk(\infinity)$). Being the left adjoint of a lax monoidal \infinity-functor, $(-)^{\mixgr}$ naturally possesses an oplax monoidal structure, and so given two filtered $\Bbbk$-modules $M_{\bullet}$ and $N_{\bullet}$ we have a natural map$$\lp M_{\bullet}\otimesfil_{\Bbbk}N_{\bullet}\rp^{\mixgr}\longrightarrow\lp M_{\bullet}\rp^{\mixgr}\otimes_{\Bbbk}^{\mixgr}\lp N_{\bullet}\rp^{\mixgr}.$$ Since equivalences of mixed graded $\Bbbk$-modules are detected by the underlying graded $\Bbbk$-modules and forgetting the mixed structure is a strongly monoidal \infinity-functor, it suffices to show that the underlying graded map $$\oblv_{\varepsilon}\lp M_{\bullet}\otimesfil_{\Bbbk}N_{\bullet}\rp^{\mixgr}\simeq \lp M_{\bullet}\otimesfil_{\Bbbk}N_{\bullet}\rp^{\gr}\longrightarrow\oblv_{\varepsilon}\lp\lp M_{\bullet}\rp^{\mixgr}\otimes_{\Bbbk}^{\mixgr}\lp N_{\bullet}\rp^{\mixgr}\rp\simeq \lp M_{\bullet}\rp^{\gr}\otimes_{\Bbbk}^{\gr}\lp N_{\bullet}\rp^{\gr}$$ is an equivalence. Now, \cref{prop:gradedweightequivalence} tells us that the graded $\Bbbk$-module on the left is described in its $p$ weight component by the formula$$\lp M_{\bullet}\otimesfil_{\Bbbk} N_{\bullet}\rp^{\mixgr}_{\!p}\simeq \Gr_{-p}\lp M_{\bullet}\otimesfil_{\Bbbk}N_{\bullet}\rp.$$But now \cref{lemma:scholze5} tells that the above object is equivalent to $$\bigoplus_{i+j=p}\Gr_{-i}M_{\bullet}\otimes_{\Bbbk}\Gr_{-j}N_{\bullet}$$which is exactly the $p$ weight component of the tensor product of $\lp M_{\bullet}\rp ^{\gr}\otimes^{\gr}_{\Bbbk}\lp N_{\bullet}\rp ^{\gr}$. So, the natural map above is indeed an equivalence.
\end{proof}
\begin{corollaryn}
	\label{cor:mixedgradedistexact}
	The $\infinity$-functor $(-)^{\mixgr}\colon\Modfil_{\Bbbk}\to\Modmixgr_{\Bbbk}$ is $t$-exact.
\end{corollaryn}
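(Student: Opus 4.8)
\emph{Proof strategy.} The plan is to read $t$-exactness off directly from the weight-wise description of $(-)^{\mixgr}$ furnished by \cref{prop:gradedweightequivalence}, using the explicit characterizations of connective and coconnective objects on both sides. Recall that $\lp M_\bullet\rp^{\mixgr}$ has $p$-th weight component $\Gr_{-p}M_\bullet[-2p]$; that by \cref{thm:mixedgradedtstructure} a mixed graded $\Bbbk$-module is connective (resp. coconnective) precisely when its weight-$p$ component is $(-p)$-connective (resp. $(-p)$-coconnective) for every $p$; and that by \cref{thm:beilinsontstructure} a filtered $\Bbbk$-module $M_\bullet$ is Beilinson-connective iff $\Gr_q M_\bullet$ is $(-q)$-connective for all $q$, while it is Beilinson-coconnective iff $M_q$ is $(-q)$-coconnective for all $q$.

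For preservation of connective objects the argument is a one-line shift computation: if $\Gr_q M_\bullet$ is $(-q)$-connective for all $q$, then $\Gr_{-p}M_\bullet$ is $p$-connective, hence $\Gr_{-p}M_\bullet[-2p]$ is $(-p)$-connective for all $p$, so $\lp M_\bullet\rp^{\mixgr}$ is connective. Alternatively one can bypass \cref{prop:gradedweightequivalence} here entirely: the $\infinity$-functor $(-)^{\fil}$ was shown above to be $t$-exact, in particular left $t$-exact, so its left adjoint $(-)^{\mixgr}$ is automatically right $t$-exact.

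The only point that is not purely formal concerns coconnective objects, and it is here that the asymmetry in the definition of the Beilinson $t$-structure — coconnectivity is phrased via the filtered pieces $M_q$, not the graded pieces — must be addressed. I would first record the small auxiliary fact that if $M_\bullet$ is Beilinson-coconnective, then $\Gr_p M_\bullet$ is $(-p)$-coconnective for every $p$: rotating the defining cofiber sequence of the $p$-th graded piece yields a fiber sequence $M_p\longrightarrow\Gr_p M_\bullet\longrightarrow M_{p+1}[1]$, in which $M_p$ is $(-p)$-coconnective by hypothesis and $M_{p+1}[1]$ is $(-p)$-coconnective since $M_{p+1}$ is $(-p-1)$-coconnective, and coconnective $\Bbbk$-modules are closed under extensions. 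Granting this, the computation is the mirror of the connective case: $\Gr_{-p}M_\bullet$ is $p$-coconnective, hence $\Gr_{-p}M_\bullet[-2p]$ is $(-p)$-coconnective for all $p$, so $\lp M_\bullet\rp^{\mixgr}$ is coconnective. Combining the two halves gives $t$-exactness.

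The main obstacle is therefore precisely that auxiliary fact about graded pieces of a Beilinson-coconnective filtration; everything else is bookkeeping with homological shifts by $-2p$. One should note that the converse implication fails — the graded pieces of a constant nonzero filtration all vanish, yet its filtered pieces are not coconnective — which is exactly the obstruction to $\Modfil_\Bbbk$ being left complete, but this is harmless here because we only need $(-)^{\mixgr}$ to preserve, not to reflect, coconnective objects.
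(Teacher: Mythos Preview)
Your proposal is correct and follows essentially the same route as the paper: both arguments read off $t$-exactness weight-wise from \cref{prop:gradedweightequivalence}, and both reduce the coconnective case to the auxiliary fact that $\Gr_p M_\bullet$ is $(-p)$-coconnective whenever $M_\bullet$ is Beilinson-coconnective, which the paper extracts from the long exact sequence of homotopy groups of the cofiber sequence $M_{p+1}\to M_p\to\Gr_p M_\bullet$ while you equivalently use closure under extensions in its rotation. Your alternative for the connective case via the adjunction $(-)^{\mixgr}\dashv(-)^{\fil}$ is a pleasant shortcut the paper does not mention.
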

\begin{proof}
	Let $M_{\bullet}$ be an $n$-connective filtered ${\Bbbk}$-module. Then $\Gr_pM_{\bullet}$ is an $(n-p)$-connective ${\Bbbk}$-module for any integer $p$, therefore $$\lp M_{\bullet}\rp^{\mixgr}_{\!p}\overset{\text{Prop. }\ref{prop:associatedmixedgraded}}{\simeq}\Gr_{-p}M_{\bullet}[-2p]$$is $(n-p)$-connective.\\
	On the converse, if $M_{\bullet}$ is an $n$-coconnective filtered ${\Bbbk}$-module, then $ M_p$ is an $(n-p)$-coconnective ${\Bbbk}$-module for any $p$. This implies that $\Gr_pM_{\bullet}$ is an $(n-p)$-coconnective ${\Bbbk}$-module for any $p$ as well: in fact, $\Gr_pM_{\bullet}$ is the cofiber of $M_{p+1}\longrightarrow M_p$. Since $M_{\bullet}$ is $n$-coconnective, $M_{p+1}$ and $M_p$ are respectively $(n-p-1)$-coconnective and $(n-p)$-coconnective ${\Bbbk}$-modules. By inspecting the long exact sequence induced on the homotopy groups by the cofiber sequence
	\begin{displaymath}
	M_{p+1}\longrightarrow M_p\longrightarrow\Gr_pM_{\bullet}
	\end{displaymath}
	one has that $\pi_q{\Gr_p}{M_{\bullet}}\cong 0$ for all $q> n-p$, hence ${\Gr_p}{M_{\bullet}}$ is an $(n-p)$-coconnective ${\Bbbk}$-module and therefore $$\lp M_{\bullet}\rp^{\mixgr}_{\!p}\overset{\text{Prop. }\ref{prop:associatedmixedgraded}}{\simeq} \Gr_{-p}M_{\bullet}[-2p]$$is $(n-p)$-coconnective for all $p$'s.
\end{proof}
The following two claims are the last ingredients we need to prove \cref{thm:leftcompletion}.
\begin{propositionn}
	\label{prop:mixedgradedequivalence}
	The counit morphism $ (-)^{\mixgr}\circ (-)^{\fil}\to\operatorname{id}_{ \Modmixgr_{\Bbbk}}$ of the adjunction $(-)^{\mixgr}\dashv (-)^{\fil}$ is an equivalence of $\infinity$-functors.
\end{propositionn}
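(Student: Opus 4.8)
The plan is to verify the counit $\epsilon\colon (-)^{\mixgr}\circ(-)^{\fil}\to\operatorname{id}_{\Modmixgr_{\Bbbk}}$ weight-wise, after identifying its source by means of \cref{prop:gradedweightequivalence}.

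First I would recall that limits and colimits in $\Modmixgr_{\Bbbk}$ are computed weight-wise (\cref{lemma:colimitsinmixgr}) and that the weight $\infty$-functors $(-)_q$ factor through $\oblv_{\varepsilon}$; consequently a morphism of mixed graded $\Bbbk$-modules is an equivalence if and only if it is an equivalence in every weight, and in particular $\oblv_{\varepsilon}$ is conservative. It therefore suffices to show that, for every mixed graded $\Bbbk$-module $N_{\bullet}$ and every integer $p$, the weight-$p$ component of $\epsilon_{N_{\bullet}}\colon\big((N_{\bullet})^{\fil}\big)^{\mixgr}\to N_{\bullet}$ is an equivalence of $\Bbbk$-modules.

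To describe the source, I would apply \cref{prop:gradedweightequivalence} to the filtered $\Bbbk$-module $(N_{\bullet})^{\fil}$, which gives $\big(((N_{\bullet})^{\fil})^{\mixgr}\big)_p\simeq\Gr_{-p}\big((N_{\bullet})^{\fil}\big)[-2p]$. By \cref{remark:keyremarkfiltration} the filtered module $(N_{\bullet})^{\fil}$ is modeled in weight $r$ by $\prod_{q\geqslant r}N_{-q}[-2q]$ with transition maps the evident inclusions of sub-products, so $\Gr_r\big((N_{\bullet})^{\fil}\big)$ is the cofiber of the inclusion $\prod_{q\geqslant r+1}N_{-q}[-2q]\hookrightarrow\prod_{q\geqslant r}N_{-q}[-2q]$; the long exact sequence in homotopy — exactly the computation already carried out in the proof that $(-)^{\fil}$ is $t$-exact — identifies this cofiber with the single surviving factor $N_{-r}[-2r]$. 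Taking $r=-p$ and shifting, one obtains a natural equivalence $\big(((N_{\bullet})^{\fil})^{\mixgr}\big)_p\simeq N_p[2p][-2p]\simeq N_p$, so the source and target of $\epsilon_{N_{\bullet}}$ coincide in each weight.

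The remaining, and genuinely technical, point is to verify that $\epsilon$ itself induces this equivalence, rather than some other, possibly non-invertible, self-map of $N_p$. I would do this by tracking the adjunction data through the explicit models: the counit is pinned down by the triangle identities for $(-)^{\mixgr}\dashv(-)^{\fil}$, while the identification $\oblv_{\varepsilon}\circ(-)^{\mixgr}\simeq\Gr_{\bullet}(-)$ established in the proof of \cref{prop:gradedweightequivalence} — which arose by comparing the adjunctions $\Gr_{\bullet}\dashv\operatorname{R}^{\fil}$ and $\oblv_{\varepsilon}\dashv\operatorname{R}_{\varepsilon}$ — carries $\oblv_{\varepsilon}(\epsilon_{N_{\bullet}})$ in weight $p$ to exactly the canonical quotient map of $\prod_{q\geqslant -p}N_{-q}[-2q]$ onto its top graded piece $N_p[2p]$ (desuspended by $[-2p]$), which is an equivalence. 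Hence $\oblv_{\varepsilon}(\epsilon_{N_{\bullet}})$ is a weight-wise equivalence of graded $\Bbbk$-modules, and by conservativity of $\oblv_{\varepsilon}$ the counit $\epsilon_{N_{\bullet}}$ is an equivalence of mixed graded $\Bbbk$-modules, naturally in $N_{\bullet}$. I expect the bookkeeping needed to match the counit with this canonical projection to be the main obstacle; all the other ingredients are either formal or have already been established above.
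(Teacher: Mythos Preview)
Your proposal is correct and follows essentially the same route as the paper: compute $\big((N_{\bullet})^{\fil}\big)^{\mixgr}$ weight-wise via \cref{prop:gradedweightequivalence} and \cref{remark:keyremarkfiltration}, obtain $N_p$ in weight $p$, and conclude by conservativity of $\oblv_{\varepsilon}$. Your final paragraph, tracking the counit through the adjunction data to confirm it really is the canonical projection, is more scrupulous than the paper itself, which simply asserts that the counit ``induces on each weight precisely the equivalence $\Gr_{-p}(M_{\bullet})^{\fil}[-2p]\simeq M_p$'' without further justification.
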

\begin{proof}
	Let $M_{\bullet}$ be a mixed graded $\Bbbk$-module. By the description provided in \cref{remark:keyremarkfiltration}, its associated filtered $\Bbbk$-module is$$\dots\longhookrightarrow\prod_{q\geqslant p+1}M_{-q}[-2q]\longhookrightarrow\prod_{q\geqslant p}M_{-q}[-2q]\longhookrightarrow\prod_{q\geqslant p-1}M_{-q}[-2q]\longhookrightarrow\dots$$and for any $p$, $\Gr_{p}\lp M_{\bullet}\rp^{\fil}[-2p]\simeq M_{-p}[-2p]$. Applying $(-)^{\mixgr}$ to $\lp M_{\bullet}\rp^{\fil}$, we get a mixed graded $\Bbbk$-module whose $p$-th weight component is equivalent to $$\Gr_{-p}\lp M_{\bullet}\rp^{\fil}[-2p]\overset{\text{Prop. }\ref{prop:associatedmixedgraded}}{\simeq} M_{-(-p)}[-2(-p)][-2p]\simeq M_p.$$Since equivalences of mixed graded $\Bbbk$-modules are detected by forgetting the mixed structure, we have that the counit is an equivalence because it induces on each weights precisely the equivalence $\Gr_{-p}{\lp M_{\bullet}\rp}^{\fil}[-2p]\simeq M_p $.
\end{proof}
\begin{propositionn}
	\label{prop:completefiltered}
	When restricted to $n$-coconnective objects of $\Modfil_{\Bbbk}$, the unit morphism $\eta\colon \operatorname{id}_{\lp\Modfil_{\Bbbk}\rp_{\leqslant n}}\to (-)^{\fil}\circ (-)^{\mixgr}$ of the adjunction $(-)^{\mixgr}\dashv (-)^{\fil}$ is an equivalence of $\infinity$-functors for all integers $n$.
\end{propositionn}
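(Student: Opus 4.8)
The plan is to reduce the claim, via joint conservativity of the associated-graded functors on \emph{complete} filtered modules, to the assertion that $\Gr_{\bullet}$ carries the unit $\eta$ to an equivalence, and then to extract the latter from the triangle identities together with the fact that the counit of the adjunction $(-)^{\mixgr}\dashv(-)^{\fil}$ is an equivalence (\cref{prop:mixedgradedequivalence}). Note in advance that the coconnectivity hypothesis will only be used to guarantee completeness; the argument actually shows $\eta_{M_{\bullet}}$ is an equivalence on every complete filtered $\Bbbk$-module.

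First I would fix an integer $n$ and an $n$-coconnective filtered $\Bbbk$-module $M_{\bullet}$, and observe that both the source and the target of $\eta_{M_{\bullet}}\colon M_{\bullet}\to\lp M_{\bullet}^{\mixgr}\rp^{\fil}$ are complete. The source is complete by \cref{lemma:coconnectiveimpliescomplete}. For the target, the $\infinity$-functor $(-)^{\mixgr}$ is $t$-exact by \cref{cor:mixedgradedistexact} and the associated filtered $\infinity$-functor is $t$-exact, so $\lp M_{\bullet}^{\mixgr}\rp^{\fil}$ is again $n$-coconnective, hence complete, once more by \cref{lemma:coconnectiveimpliescomplete}. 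Consequently $\eta_{M_{\bullet}}$ is a morphism in $\Modfilcomp_{\Bbbk}$, and by \cref{lemma:scholze} it is an equivalence as soon as $\Gr_p\lp\eta_{M_{\bullet}}\rp$ is an equivalence for every integer $p$, i.e. as soon as $\Gr_{\bullet}\lp\eta_{M_{\bullet}}\rp$ is an equivalence in $\Modgr_{\Bbbk}$.

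Next I would invoke the identification of $\infinity$-functors $\Gr_{\bullet}\simeq\oblv_{\varepsilon}\circ(-)^{\mixgr}\colon\Modfil_{\Bbbk}\to\Modgr_{\Bbbk}$ established in the proof of \cref{prop:gradedweightequivalence} (both sides being left adjoint to $\operatorname{R}^{\fil}\simeq(-)^{\fil}\circ\operatorname{R}_{\varepsilon}$). Under this identification $\Gr_{\bullet}\lp\eta_{M_{\bullet}}\rp$ becomes $\oblv_{\varepsilon}$ applied to the morphism $(-)^{\mixgr}\lp\eta_{M_{\bullet}}\rp$, so it suffices to show the latter is an equivalence in $\Modmixgr_{\Bbbk}$. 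But by the triangle identity for $(-)^{\mixgr}\dashv(-)^{\fil}$, the composite
$$M_{\bullet}^{\mixgr}\xrightarrow{\ (-)^{\mixgr}\lp\eta_{M_{\bullet}}\rp\ }\lp\lp M_{\bullet}^{\mixgr}\rp^{\fil}\rp^{\mixgr}\xrightarrow{\ \epsilon_{M_{\bullet}^{\mixgr}}\ }M_{\bullet}^{\mixgr}$$
is homotopic to the identity, where $\epsilon$ denotes the counit. Since $\epsilon$ is an equivalence by \cref{prop:mixedgradedequivalence}, the morphism $(-)^{\mixgr}\lp\eta_{M_{\bullet}}\rp$ is a section of an equivalence, hence itself an equivalence; applying $\oblv_{\varepsilon}$, which preserves equivalences, concludes the argument.

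I expect the only delicate point to be the bookkeeping in the third paragraph: one must check that the canonical equivalence $\Gr_{\bullet}\simeq\oblv_{\varepsilon}\circ(-)^{\mixgr}$ produced by uniqueness of adjoints is compatible with the respective units, so that it genuinely transports $\Gr_{\bullet}\lp\eta_{M_{\bullet}}\rp$ to $\oblv_{\varepsilon}$ of $(-)^{\mixgr}\lp\eta_{M_{\bullet}}\rp$. Once that compatibility is pinned down the rest is a purely formal manipulation of adjunction data, and the completeness reductions as well as the conservativity input are immediate from results already recalled in the text.
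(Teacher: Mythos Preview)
Your proof is correct and takes a genuinely different route from the paper's. The paper argues by explicit computation: it identifies $\lp\lp N_{\bullet}\rp^{\mixgr}\rp^{\fil}_{p}$ with $\prod_{q\geqslant p}\Gr_q N_{\bullet}$, builds a $3\times 3$ grid of fiber sequences whose rightmost column exhibits $\Gr_p(\eta)$ as an equivalence directly, deduces that the fibers $F_p$ of $\eta_p$ form a constant tower, and then invokes completeness (via \cref{lemma:coconnectiveimpliescomplete}) to conclude $F_p\simeq N_{\infty}\simeq 0$. Your argument replaces the explicit identification of $\Gr_p(\eta)$ by a purely formal one: the natural equivalence $\Gr_{\bullet}\simeq\oblv_{\varepsilon}\circ(-)^{\mixgr}$ lets you trade $\Gr_{\bullet}(\eta)$ for $(-)^{\mixgr}(\eta)$, which the triangle identity together with \cref{prop:mixedgradedequivalence} forces to be an equivalence. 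Both proofs ultimately rest on the same two ingredients---completeness of eventually coconnective objects and the fact that $\Gr_{\bullet}(\eta)$ is an equivalence---but your approach extracts the second ingredient from the adjunction calculus rather than from a hands-on computation, and it makes transparent (as the paper observes only in a subsequent remark) that the argument works for every complete filtered module.

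One small comment on your final paragraph: the ``delicate point'' you flag is not actually delicate. You do not need any compatibility between the equivalence $\Gr_{\bullet}\simeq\oblv_{\varepsilon}\circ(-)^{\mixgr}$ and adjunction units; you only need that it is a \emph{natural} equivalence of functors $\Modfil_{\Bbbk}\to\Modgr_{\Bbbk}$, so that the two images of the morphism $\eta_{M_{\bullet}}$ are intertwined by equivalences and hence one is invertible if and only if the other is. This is automatic.
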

\begin{proof}
	Let $M_{\bullet}$ be a filtered $\Bbbk$-module which is $n$-coconnective for the Beilinson $t$-structure for some integer $n$. We briefly study what $\lp\lp N_{\bullet}\rp^{\mixgr}\rp^{\!\fil}$ is: its $p$-th part is the $\Bbbk$-module given by the Tate realization
	\begin{displaymath}
	\left|\sigma_{\leqslant -p}\lp N_{\bullet}\rp^{\mixgr}\right|^{\operatorname{t}}\simeq\prod_{q\geqslant p}\lp N_{\bullet}\rp^{\mixgr}_{-q}[-2q].
	\end{displaymath}
	But we have that
	\begin{align*}\lp N_{\bullet}\rp^{\mixgr}_{-q}\overset{\text{Prop. }\ref{prop:associatedmixedgraded}}{\simeq}{\Gr_{-(-q)}}{N_{\bullet}}[-2(-q)]\simeq {\Gr_q}{N_{\bullet}}[2q]
	\end{align*}
	and since the shift $\infinity$-functors commute with arbitrary products we have that\begin{displaymath}
	\lp\lp N_{\bullet}\rp^{\mixgr}\rp^{\!\fil}_{\!p}\simeq \prod_{q\geqslant p}{\Gr_q}N_{\bullet}
	\end{displaymath}The transition maps are just given by inclusions.\\
	Let\begin{displaymath}
	F_p\coloneqq{\fib}{\lp N_p\overset{\eta_p}{\longrightarrow} \prod_{q\geqslant p}{\Gr_q}{N_{\bullet}}\rp}
	\end{displaymath}denote the fiber of the $p$-th component of $\eta$. Then we have the following diagram, where every row and every column is a fiber sequence.
	\begin{displaymath}
	\begin{tikzpicture}[scale=0.65]
	\node (fib1) at (-7,4){$F_{p+1}$};
	\node (fib2) at (0,4){$F_p$};
	\node (fib3) at (7,4){${\cofib}{\lp F_{p+1}\longrightarrow F_p\rp}$};
	\node (Nn1) at (-7,2){$N_{p+1}$};
	\node (Nn) at (0,2){$N_p$};
	\node (Grn) at (7,2){${\Gr_p}{N_{\bullet}}$};
	\node (prodn1) at (-7,-0.3){$\underset{q\geqslant p+1}{\prod}{\Gr_q}{N_{\bullet}}$};
	\node (prodn) at (0,-0.3){$\underset{q\geqslant p}{\prod}\hsp{\Gr_q}{N_{\bullet}}$};
	\node (Grnb) at (7,-0.1){${\Gr_p}{N_{\bullet}}$};
	\draw[->,font=\scriptsize] (Nn1) to node[left]{$\eta_{p+1}$} (prodn1);
	\draw[->,font=\scriptsize] (Nn) to node[right]{$\eta_p$} (prodn);
	\draw[->,font=\scriptsize] (Grn) edge node[above,rotate=-90]{$\simeq$} (Grnb);
	\draw[->] (fib1) -- (fib2);
	\draw[->] (fib1) -- (Nn1);
	\draw[->] (fib2) -- (fib3);
	\draw[->] (fib3) -- (Grn);
	\draw[->] (fib2) -- (Nn);
	\draw[->] (Nn1) -- (Nn);
	\draw[->] (Nn) -- (Grn);
	\draw[right hook->] (-5.2,0) -- (-1.5,0);
	\draw[->] (1.6,0) -- (6,0);
	\end{tikzpicture}
	\end{displaymath}
	The map on the bottom right is an equivalence, and this forces ${\cofib}{\lp F_{p+1}\to F_p\rp}$ to be zero. This implies that $F_{p+1}\to F_p$ is an equivalence; by induction, we can conclude that $F_p$ is the fiber of $$N_m\longrightarrow\prod_{q\geqslant m}\Gr_qN_{\bullet}$$for any integer $m$. So, the fact that $\eta_m$ is an equivalence for any $m$ is equivalent to proving that $F_p$ is $0$, but this is a consequence of Lemma \ref{lemma:coconnectiveimpliescomplete}. Indeed, if $F_p$ was not $0$, we would have a non-trivial $\Bbbk$-module with a non-trivial map $F_p\to N_m$ for all $m$'s. Such map would be forced to factor through $N_{\infty}$; in particular, it  would yield that $N_{\infty}$ is non-trivial, hence that $N_{\bullet}$ is not complete. But this contradicts the (eventually) coconnectivity assumption on $N_{\bullet}$.
\end{proof}
\begin{remark}
	In the proof of \cref{prop:completefiltered}, we did not need to restrict ourselves to eventually coconnective objects. In fact, the key property of eventually coconnective filtered $\Bbbk$-modules that we used was their completeness (\cref{lemma:coconnectiveimpliescomplete}): the unit morphism of the adjunction $\lp -\rp^{\mixgr}\dashv\lp-\rp^{\fil}$ is an equivalence on \textit{all} complete filtered $\Bbbk$-modules, which are not necessarily eventually coconnective (take for example the filtered $\Bbbk$-module given by the $0$ sequence everywhere, except for $\Bbbk[1]$ in weight $0$). The fiber$$F_p\coloneqq\fib\lp N_p\to \lp\lp N_{\bullet}\rp^{\mixgr}\rp^{\fil}_p\rp$$of the proof of \cref{prop:completefiltered} is, in fact, $N_{\infty}$ itself: the unit $\eta$ fails to be an equivalence \textit{precisely} on non-complete filtered $\Bbbk$-modules.
\end{remark}
\begin{parag}
	All previous propositions, lemmas and remarks imply together \cref{thm:leftcompletion}. In fact, \cref{prop:leftcompletionBeilinson} allows us to identify the left completion $\Modfilcomp_{\Bbbk}$ with the $\infinity$-category of complete filtered $\Bbbk$-modules in the sense of \cref{def:completefiltered}, endowed with the restriction of the Beilinson $t$-structure. The equivalence of \cref{thm:leftcompletion} is then provided by the composition of the natural inclusion $\Modfilcomp_{\Bbbk}\subseteq\Modfil_{\Bbbk}$ with the associated mixed graded $\infinity$-functor of \cref{def:associatedmixedgraded}. Its inverse is described by the composition of the associated filtered $\infinity$-functor of \cref{def:filteredassociated} with the completion $\infinity$-functor of \cref{lemma:scholze2}. The fact that these $\infinity$-functors are indeed equivalences follows from Propositions \ref{prop:mixedgradedequivalence} and \ref{prop:completefiltered}. The $t$-exactness of the equivalence $\Modfilcomp_{\Bbbk}\simeq \Modmixgr_{\Bbbk}$ follows from the $t$-exactness of the $\infinity$-functors $(-)^{\fil}$ and $(-)^{\mixgr}$.
\end{parag}
\begin{parag}
	Finally, since $(-)^{\fil}$ and $(-)^{\mixgr}$ are both $t$-exact $\infinity$-functors which induce equivalences on coconnective objects, it follows that the hearts of the two $t$-structures are naturally equivalent and both yield, when $\Bbbk$ is a discrete ring, the usual $1$-category of chain complexes of $\Bbbk$-modules: this is the last statement that we left unproven in \cref{thm:mixedgradedtstructure}.
\end{parag}	
\begin{porism}
	\label{porism:monoidalstructurecompletefiltered}
	The strongly monoidal structure of the \infinity-functor $\lp-\rp^{\mixgr}$ proved in \cref{corollary:mixedgradedismonoidal}, together with the description of the monoidal structure on complete filtered $\Bbbk$-modules provided by \cref{lemma:scholze4}, implies that the equivalence of \cref{thm:leftcompletion} is \textit{strongly monoidal}. Explicitly, given two complete filtered $\Bbbk$-modules $M_{\bullet}$ and $N_{\bullet}$, their tensor product in $\Modfilcomp_{\Bbbk}$, given by the completion of their tensor product as mere filtered $\Bbbk$-modules (\cite[Theorem $2.2.5$]{enhancingfiltered}), is equivalently described as
	\begin{align*}
	M_{\bullet}\text{$\widehat{\otimes}^{\fil}_{\Bbbk}$}N_{\bullet}
	\simeq \lp\lp M_{\bullet}\rp^{\mixgr}\otimesmixgr_{\Bbbk}\lp  N_{\bullet}\rp^{\mixgr}\rp^{\fil}.
	\end{align*}
\end{porism}
\begin{porism}
	\label{porism:taterealizationstronglymonoidal}
	In \cref{sec:basicmixedgraded}, we mentioned how the Tate realization \infinity-functor was \textit{strongly} monoidal when restricted to non-negatively mixed graded $\Bbbk$-modules. This claim follows immediately from \cref{thm:leftcompletion}: indeed, we have an equivalence in $\Fun{\lp\Modmixgr_{\Bbbk},\hsp\Mod_{\Bbbk}\rp}$ $$\left|-\right|^{\operatorname{t}}\simeq (-)_{-\scriptstyle\infty}\circ(-)^{\fil}$$and since $(-)^{\mixgr}$ is a localization \infinity-functor, we have another equivalence $$\left|-\right|^{\operatorname{t}}\circ (-)^{\mixgr}\simeq (-)_{-\scriptstyle\infty}$$if one restricts $(-)^{\mixgr}$ to $\Modfilcomp_{\Bbbk}$. Since the \infinity-functor $(-)_{-\scriptstyle\infty}$ is strongly monoidal (\cite[Section $2.23$]{enhancingfiltered}), one simply has to show that the tensor product of two complete filtered $\Bbbk$-modules $M_{\bullet}$ and $N_{\bullet}$ such that $M_p$ and $N_p$ are zero for all positive integers $p$ is again complete. The explicit formula for the filtered tensor product provided in \ref{parag:monoidalstructureonmodfil} yields that$$\lp M_{\bullet}\otimesfil_{\Bbbk} N_{\bullet}\rp_n\simeq \underset{p+q\geqslant n}{\colim}\hsp M_p\otimes_{\Bbbk} N_q.$$If both $M_{\bullet}$ and $N_{\bullet}$ are endowed with a filtration bounded in non-positive weights, the above formula is $0$ for any positive integer $n$. Hence the limit on the tower of $\Bbbk$-modules corresponding to $M_{\bullet}\otimesfil_{\Bbbk}N_{\bullet}$ is zero, hence the filtered tensor product in this case preserves completeness.
\end{porism}
\begin{porism}
	\label{porism:taterealizationcolimits}
	Arguing analogously to \cref{porism:taterealizationstronglymonoidal}, we can prove that the Tate realization \infinity-functor preserves colimits when it is restricted to mixed non-negatively graded $\Bbbk$-modules. Indeed, we have a commutative square
	$$\begin{tikzpicture}[scale=0.75]
	\node (a) at (-2,2){$\Modmixgrcn_{\Bbbk}$};
	\node (b) at (2,2){$\operatorname{Mod}^{\fil,\leqslant0}_{\Bbbk}$};
	\node (c) at (-2,0){$\Modmixgr_{\Bbbk}$};
	\node (d) at (2,0){$\Modfil_{\Bbbk}$};
	\node (e) at (0,-2){$\Mod_{\Bbbk}$};
	\draw[right hook->, font=\scriptsize](a) -- (c);
	\draw[right hook->, font=\scriptsize](b) -- (d);
	\draw[->, font=\scriptsize](a) to node[above]{$(-)^{\fil}$} (b);
	\draw[->, font=\scriptsize](c) to node[above]{$(-)^{\fil}$} (d);
	\draw[->, font=\scriptsize](c) to[bend right] node[ left]{$\left|-\right|^{\operatorname{t}}$} (e);
	\draw[->, font=\scriptsize](d) to[bend left] node[right]{$(-)_{-\scriptstyle\infty}$} (e);
	\end{tikzpicture}$$
	where $\operatorname{Mod}^{\fil,\leqslant0}_{\Bbbk}$ is the \infinity-functor \infinity-category ${\Fun}{\lp\NN_{\geqslant},\hsp \Mod_{\Bbbk}\rp}$ of $\Bbbk$-modules with filtration bounded in non-positive weights, and the inclusion is induced by the map of posets $\NN_{\geqslant}\subseteq \ZZ_{\geqslant}$. The \infinity-functor $(-)_{-\scriptstyle\infty}$ commutes with colimits because it is a colimit \infinity-functor itself, so it suffices to show that $(-)^{\fil}\colon\Modmixgrcn_{\Bbbk}\to \operatorname{Mod}^{\fil,\leqslant0}_{\Bbbk}$ commutes with colimits. This is true because colimits of non-positively filtered $\Bbbk$-modules are again non-positively filtered, since colimits are computed weight-wise. Hence they are again complete, and so the colimit in mixed graded $\Bbbk$-modules agrees with the colimit in filtered $\Bbbk$-modules. In particular, the \infinity-functor\begin{align}
	\label{functor:taterealizationconnective}
	\left|-\right|^{\operatorname{t}}\colon\Modmixgrcn_{\Bbbk}\longrightarrow\Mod_{\Bbbk}
	\end{align}admits a right adjoint, which we can compute explicitly using the fact that the \infinity-functor \ref{functor:taterealizationconnective} is the composition of$$\Modmixgrcn_{\Bbbk}\overset{\simeq}{\longrightarrow}\operatorname{Mod}^{\fil,\leqslant0}_{\Bbbk}\longhookrightarrow\Modfil_{\Bbbk}\xrightarrow{(-)_{-\scriptstyle\infty}}\Mod_{\Bbbk}.$$In fact, each \infinity-functor of this composition is a left adjoint, so the right adjoint to \ref{functor:taterealizationconnective} is the composition of right adjoints$$\Mod_{\Bbbk}\overset{(-)^{\operatorname{const}}}{\longrightarrow}\Modfil_{\Bbbk}\overset{\sigma_{\leqslant 0}}{\longrightarrow}\operatorname{Mod}^{\fil,\leqslant0}_{\Bbbk}\overset{\simeq}{\longrightarrow}\Modmixgrcn_{\Bbbk}$$
	where $(-)^{\operatorname{const}}$ is the diagonal \infinity-functor sending a $\Bbbk$-module to the constant sequence, $\sigma_{\leqslant 0}$ is the naive truncation \infinity-functor for filtered $\Bbbk$-modules (built similarly as the \infinity-functor of Definition \ref{def:naivetruncationindegleqp}), and the last \infinity-functor is the mixed graded $\Bbbk$-module construction (which is an equivalence because every filtered $\Bbbk$-module is complete if it is $0$ for all positive integers). In particular, this right adjoint agrees with$$(-)(0)\colon\Mod_{\Bbbk}\longrightarrow\Modmixgrcn_{\Bbbk}\subseteq\Modmixgr_{\Bbbk}.$$
\end{porism}
\printbibliography
\end{document}